\newtheorem{theorem}{Theorem}[section]
\newtheorem{lemma}[theorem]{Lemma}
\newtheorem{proposition}[theorem]{Proposition}
\newtheorem{corollary}[theorem]{Corollary}
\newdimen\bibspace
\renewenvironment{thebibliography}[1]{%
 \section*{\refname 
       \@mkboth{\MakeUppercase\refname}{\MakeUppercase\refname}}%
     \list{\@biblabel{\@arabic\c@enumiv}}%
          {\settowidth\labelwidth{\@biblabel{#1}}%
           \leftmargin\labelwidth
           \advance\leftmargin\labelsep
           \itemsep\bibspace
           \parsep\z@skip     %
           \@openbib@code
           \usecounter{enumiv}%
           \let\p@enumiv\@empty
           \renewcommand\theenumiv{\@arabic\c@enumiv}}%
     \sloppy\clubpenalty4000\widowpenalty4000%
     \sfcode`\.\@m}
    {\def\@noitemerr
      {\@latex@warning{Empty `thebibliography' environment}}%
     \endlist}
                \newcommand{\pa}{\partial}
           \newcommand{\ud}{\mathrm{d}}
\newcommand{\be}{\begin{equation}}      \newcommand{\ee}{\end{equation}}
\newcommand{\R}{\mathbb{R}}
\begin{document}

\title{Sharp Sobolev inequalities involving boundary terms revisited}

\author{{\medskip  Zhongwei Tang \thanks{Z. Tang is supported by NSFC (No. 12071036).},\ \  Jingang Xiong \footnote{J. Xiong is partially supported by NSFC (No. 11922104,11631002).}, \ \  Ning Zhou} }

\date{\today}

\maketitle

\begin{abstract}
We revisit  the sharp Sobolev  inequalities involving boundary terms on Riemannian manifolds with boundaries proved by \emph{[Y.Y. Li and M. Zhu, Geom. Funct. Anal. \textbf{8} (1998),  59--87.]} and explore the role of the mean curvature.
\end{abstract}

\section{Introduction}

Let $\R^n$ be the Euclidean space of dimension $n \geq 3$, and $2^*=2n/(n-2)$.  It was shown by Aubin \cite{AubinEspaces1976} and Talenti \cite{TalentiBest1976} that
\be\label{eq:aubin-talenti}
S^{-1}=\inf \Big\{\frac{\|\nabla u\|^2_{L^{2}(\mathbb{R}^{n})}}{\|u\|^2_{L^{2^{*}}(\mathbb{R}^{n})}}: u \in L^{2^{*}}(\mathbb{R}^{n}) \backslash\{0\},\, |\nabla u| \in L^{2}(\mathbb{R}^{n})\Big\}
\ee
is achieved and the extremal functions are found. Precisely,
$$
S=(\pi n(n-2))^{-1}\Big(\frac{\Gamma(n)}{\Gamma(n/2)}\Big)^{2/n},
$$
and the extremal functions must be the form
$$
v(x)=c\Big(\frac{\lambda}{\lambda^2+|x-x_0|^2}\Big)^{\frac{n-2}{2}}
$$
for some $c \in \R$, $\lambda>0,$ and $x_0 \in \R^{n}$. By the even reflection, we have, for all $u$ satisfying $u \in L^{2^*}(\mathbb{R}_{+}^{n})$ and $|\nabla u| \in L^{2}(\mathbb{R}_{+}^{n})$,
\be
\Big(\int_{\R^n_+} |u|^{2^*}\,\ud x\Big)^{2/2^*} \leq 2^{2 / n} S \int_{\R^n_+} |\nabla u|^2 \,\ud x ,
\ee
where $\R_{+}^{n}=\{x=(x_{1}, \cdots, x_{n}):x\in  \R^{n}, ~ x_{n}>0\}$ is the upper half space.

Let $(M, g)$ be a smooth compact $n$-dimensional Riemannian manifold with smooth boundary $\partial M$ and $n \geq 3$. If the manifold $(M, g)$ supports the inequality
\be\label{eq:mainine2}
\Big(\int_{M}|u|^{2^*}\, \ud v_{g} \Big)^{2/2^*}<2^{2 / n} S \int_{M}|\nabla_{g} u|^{2}\, \ud v_{g} \quad \mbox{for any } u \in H_{0}^{1}(M) \backslash\{0\},
\ee
 Li-Zhu \cite{LiZhuSharp1998} proved that there exists a constant $A(M,g)>0$ such that
\be \label{eq:LZ98}
\Big(\int_{M}|u|^{2^*}\, \ud v_{g} \Big)^{2/2^*}  \le 2^{2 / n} S   \int_{M}|\nabla_{g} u|^{2}\, \ud v_{g} +A(M,g) \int_{\pa M} u^2 \, \ud s_g
\ee
for any $u \in H^{1}(M) $, where $\ud v_{g}$ denotes the volume form of $g$ and $\ud s_g $ is the induced volume form on $\pa M$. Moreover, they proved that inequality \eqref{eq:mainine2} is necessary to \eqref{eq:LZ98}. When $M$ is a bounded smooth domain in $\mathbb{R}^n$ and $g$ is flat, the inequality \eqref{eq:mainine2} holds automatically and \eqref{eq:LZ98} was established by Br\'ezis-Lieb \cite{BrezisLiebSobolev1985} for $M$ being the unit ball and by Adimurthi-Yadava \cite{AdimurthiYadavaSome1994} for $n \geq 5$. Without the condition \eqref{eq:mainine2}, Li-Zhu \cite{LiZhuSharp1998} proved that
\be \label{eq:LZ98-b}
\Big(\int_{M}|u|^{2^*}\, \ud v_{g} \Big)^{2/2^*}  \le 2^{2 / n} S   \int_{M}\left|\nabla_{g} u\right|^{2}\, \ud v_{g} +A'(M,g)\Big(\int_{M} u^2\, \ud v_{g} +  \int_{\pa M} u^2 \, \ud s_g\Big),
\ee
where $A'(M,g)>0$ depends only on $M$ and $g$.

The inequalities \eqref{eq:LZ98} and \eqref{eq:LZ98-b} are sharp in the sense that the constant $2^{2 / n} S$ can not be replaced by a smaller constant. This is in the same spirit of a conjecture posed by Aubin \cite{AubinProblemes1976}, which has been confirmed through the work of Hebey-Vaugon \cite{HebeyVaugonMeilleures1996}, Aubin-Li \cite{AubinLiOn1999}, Druet \cite{DruetThe1999,DruetIsoperimetric2002}.

In this paper, we would like to examine the $L^2$ term of \eqref{eq:LZ98} and \eqref{eq:LZ98-b} and obtain some Sobolev inequalities involving the mean curvature. Our main theorems are as follows.

\begin{theorem}\label{thm:mainthm2}
Let $(M, g)$ be a smooth compact $n$-dimensional Riemannian manifold with smooth boundary $\partial M$ and $n \geq 7$.  Suppose that the manifold $(M, g)$ supports the inequality \eqref{eq:mainine2}.  Then there exists  a constant $A_1(M, g)>0$ such that
\be\label{eq:mainine1}
\Big(\int_{M}|u|^{2^*}\, \ud v_{g} \Big)^{2/2^*}    \leq 2^{2 / n} S\Big(\int_{M}|\nabla_{g} u|^{2}\, \ud v_{g}+\frac{n-2}{2} \int_{\partial M} h_{g} u^{2}\, \ud s_{g}\Big)+A_1\|u\|_{L^{r}(\partial M)}^{2}
\ee
for all  $u \in H^{1}(M)$, where $h_g$ is the mean curvature of $\pa M$ and $r=\frac{2(n-1)}{n}$.
\end{theorem}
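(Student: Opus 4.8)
I would argue by contradiction, combining the concentration--compactness dichotomy with a sharp blow-up expansion in which the mean-curvature term cancels the first-order defect and the $L^{r}(\partial M)$ term absorbs the second-order one. Assume \eqref{eq:mainine1} fails for every constant. Then, with $\alpha_k:=k\to\infty$, the infimum $m_k$ of
\[
2^{2/n}S\Big(\int_M|\nabla_g u|^2\,\ud v_g+\tfrac{n-2}{2}\int_{\partial M}h_g u^2\,\ud s_g\Big)+\alpha_k\|u\|_{L^r(\partial M)}^2
\]
over $u\in H^1(M)$ with $\|u\|_{L^{2^*}(M)}=1$ satisfies $m_k<1$ for all $k$. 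The ``energy at infinity'' of this functional equals $1$: a bubble concentrating at a boundary point contributes $2^{2/n}S\|\nabla U\|_{L^2(\R^n_+)}^2=1$ in the limit, the curvature and $L^r$ terms being negligible. Hence the strict inequality $m_k<1$, the trace bound $\|u\|_{L^2(\partial M)}^2\le\va\|\nabla_g u\|_{L^2(M)}^2+C_\va\|u\|_{L^2(M)}^2$ (used to dominate the sign-indefinite curvature term), and the Li--Zhu inequality \eqref{eq:LZ98} imply, by the standard concentration argument, that $m_k$ is attained by a nonnegative $u_k$ with $\|u_k\|_{L^{2^*}(M)}=1$ solving the associated Euler--Lagrange system (a critical equation in $M$ coupled to a Neumann-type boundary condition carrying $h_g$ and the $L^r$ term).

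A priori, $u_k$ is bounded in $H^1(M)$ and $\alpha_k\|u_k\|_{L^r(\partial M)}^2$ is bounded, so $\|u_k\|_{L^r(\partial M)}\to0$ and, by interpolation of $L^p(\partial M)$ norms, $\|u_k\|_{L^2(\partial M)}\to0$; thus any weak limit $u_\infty$ lies in $H_0^1(M)$. Writing $u_k=u_\infty+w_k$, a Brezis--Lieb splitting together with \eqref{eq:LZ98} on $w_k$, the strict inequality \eqref{eq:mainine2} on $u_\infty$, and the superadditivity $a^{2/2^*}+b^{2/2^*}\ge a+b$ on $[0,1]$ force $u_\infty\equiv0$, $m_k\to1$, $\|\nabla_g u_k\|_{L^2(M)}^2\to(2^{2/n}S)^{-1}$, $\int_{\partial M}h_g u_k^2\,\ud s_g\to0$ and $\alpha_k\|u_k\|_{L^r(\partial M)}^2\to0$; in particular $u_k$ concentrates. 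Since an interior concentration point would force $\liminf\|\nabla_g u_k\|_{L^2(M)}^2\ge S^{-1}>(2^{2/n}S)^{-1}$, concentration occurs at a single boundary point $x_0\in\partial M$, at a scale $\lambda_k\to0$. Using the Euler--Lagrange equation, Fermi coordinates centred at $x_k\to x_0$, rescaling $u_k$ at scale $\lambda_k$, elliptic regularity and Harnack/barrier estimates, one obtains $u_k=\tilde B_{x_k,\lambda_k}+\phi_k$, where $\tilde B_{x_k,\lambda_k}$ is a standard half-space bubble built from the radial Aubin--Talenti profile $U$ and $\phi_k$ is a controlled remainder.

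It then remains to estimate $D_k:=1-2^{2/n}S\big(\|\nabla_g u_k\|_{L^2(M)}^2+\tfrac{n-2}{2}\int_{\partial M}h_g u_k^2\,\ud s_g\big)$; by the Euler--Lagrange identity and $\|u_k\|_{L^{2^*}(M)}=1$ one has $\alpha_k\|u_k\|_{L^r(\partial M)}^2<D_k$. Expanding $\|u_k\|_{L^{2^*}(M)}^2$, $\|\nabla_g u_k\|_{L^2(M)}^2$ and $\int_{\partial M}h_g u_k^2\,\ud s_g$ in Fermi coordinates, the $O(\lambda_k)$ contributions to $D_k$ come from the $O(x_n)$ part of the metric, which --- contracted against the \emph{radial} bubble $U$ --- sees only $\mathrm{tr}(\mathrm{II})=(n-1)h_g(x_0)$, together with the $\int_{\partial M}h_g u_k^2$ term itself; an explicit identity for $U$ (a Gamma-function computation) shows that the coefficient $\tfrac{n-2}{2}$ in \eqref{eq:mainine1} is precisely the one for which all these $O(\lambda_k)$ contributions cancel --- this is the role of the mean curvature. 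Thus $D_k\le C(M,g)\lambda_k^2$, where $n\ge7$ is what keeps this second-order term, and the contribution of $\phi_k$, genuinely $O(\lambda_k^2)$ rather than carrying a $\lambda_k^2\log(1/\lambda_k)$ loss (the relevant integrals over $\R^n_+$, such as $\int_{\R^n_+}|y|^2U^2\,\ud y$, just fail to converge at $n=6$). On the other hand a direct computation gives $\|u_k\|_{L^r(\partial M)}^2\ge c(M)\lambda_k^2>0$, the scaling relation $n-1-\tfrac{r(n-2)}{2}=r$ being exactly what singles out $r=\tfrac{2(n-1)}{n}$. Combining, $\alpha_k\,c(M)\lambda_k^2<C(M,g)\lambda_k^2$, so $\alpha_k\le C(M,g)/c(M)$, contradicting $\alpha_k\to\infty$.

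The hard part is the blow-up step of the second paragraph: one must control $\phi_k=u_k-\tilde B_{x_k,\lambda_k}$ --- in energy and pointwise, at scale $\lambda_k$ --- precisely enough that its contribution to the quadratic quantity $D_k$ is $O(\lambda_k^2)$ with a bounded coefficient, which in turn means tracking the $O(\lambda_k)$ and $O(\lambda_k^2)$ defects that the deviation of $g$ from the flat metric and of the boundary datum from $0$ create in the Euler--Lagrange equation. This is where the equation, the (delicate but by now standard) elliptic blow-up estimates, and the restriction $n\ge7$ genuinely do the work; the remaining ingredients --- the compactness/concentration dichotomy, the Aubin--Talenti and half-space Sobolev inequalities, \eqref{eq:LZ98}, and the explicit bookkeeping for $U$ --- are comparatively soft.
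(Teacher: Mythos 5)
Your proposal follows essentially the same strategy as the paper's proof: contradiction via penalized minimizers $u_\alpha$ of the associated Rayleigh quotient, boundary blow-up at scale $\mu_\alpha$, a Bahri--Coron-type decomposition of $u_\alpha$ into a bubble plus a remainder controlled in energy by the coercivity of the linearized operator (so that the energy deficit is $O(\mu_\alpha^2)$, the restriction $n\ge 7$ entering exactly through the convergence of the integrals you indicate), and the scaling lower bound $\|u_\alpha\|_{L^r(\partial M)}\ge c\mu_\alpha$ to force $\alpha\le C$. The only organizational difference is that the paper first performs a conformal change $\hat g=\psi_\alpha^{4/(n-2)}g$ making $h_{\hat g}\equiv0$ near the concentration point, so the first-order cancellation that you attribute to an explicit Gamma-function identity for $U$ is instead built in through the conformal transformation law \eqref{eq:conformal invariance} together with the vanishing of $\sum_i\hat h^{ii}(0)$ in the Fermi expansion of $\hat g$.
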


\begin{theorem}\label{thm:mainthm3}
Let $(M, g)$ be a smooth compact $n$-dimensional Riemannian manifold with smooth boundary $\partial M$ and $n \geq 7$. Then there exists a constant $A_{2}(M, g)>0$ such that for all $u \in H^{1}(M)$,
\be\label{eq:mainine3}
\begin{aligned}
\Big(\int_{M}|u|^{2^*}\, \ud v_{g} \Big)^{2/2^*}    \leq& 2^{2 / n} S\Big(\int_{M}|\nabla_{g} u|^{2}\, \ud v_{g}+\frac{n-2}{2} \int_{\partial M} h_{g} u^{2}\, \ud s_{g}\Big)\\[2mm]& +A_2(\|u\|_{L^{r_1}(M)}^{2}+\|u\|_{L^{r_2}(\partial M)}^{2}),
\end{aligned}
\ee
where $r_1=\frac{2n}{n+2}$ and $r_2=\frac{2(n-1)}{n}$.
\end{theorem}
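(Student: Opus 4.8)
The plan is to argue by contradiction through a concentration--compactness/blow-up analysis, running in parallel with the proof of Theorem~\ref{thm:mainthm2}; the essential new point is that the bulk remainder $\|u\|_{L^{r_1}(M)}^2$ in \eqref{eq:mainine3} will play the role that the global hypothesis \eqref{eq:mainine2} plays in Theorem~\ref{thm:mainthm2}, namely it forces a blown-up extremal sequence to have vanishing weak limit and to concentrate only on $\pa M$. It is worth noting first that the ``correct'' constant $2^{2/n}S$ together with the mean curvature term, albeit with crude $L^2$ remainders, already follows from Theorem~\ref{thm:mainthm2}: cover $M$ by finitely many small balls $B_i$ (smoothed near $\pa M$), each of which --- viewed as a compact manifold with boundary --- automatically satisfies \eqref{eq:mainine2}, because a function supported in $B_i$ and vanishing on $\pa B_i$ obeys the full Euclidean Sobolev inequality \eqref{eq:aubin-talenti}, whose constant $S$ is strictly below $2^{2/n}S$; applying \eqref{eq:mainine1} on each $B_i$ to $\phi_i u$ for a partition of unity with $\sum_i\phi_i^2\equiv1$, summing, and using $\sum_i|\nabla_g(\phi_i u)|^2=|\nabla_g u|^2+u^2\sum_i|\nabla_g\phi_i|^2$ and $\sum_i(\phi_i u)^2=u^2$ on $\pa M$, one obtains
\[
\Big(\int_{M}|u|^{2^*}\,\ud v_{g}\Big)^{2/2^*}\le 2^{2/n}S\Big(\int_{M}|\nabla_{g}u|^{2}\,\ud v_{g}+\tfrac{n-2}{2}\int_{\pa M}h_{g}u^{2}\,\ud s_{g}\Big)+C\big(\|u\|_{L^{2}(M)}^{2}+\|u\|_{L^{2}(\pa M)}^{2}\big).
\]
The real content of \eqref{eq:mainine3} is thus to improve $L^{2}$ to the weaker norms $L^{r_1}(M)$ and $L^{r_2}(\pa M)$ while keeping the constant $2^{2/n}S$.

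Assume \eqref{eq:mainine3} fails. Then there exist $u_k\in H^1(M)$, which we may take $\ge0$ (replace $u_k$ by $|u_k|$), with $\int_M|u_k|^{2^*}\,\ud v_g=1$ and
\[
2^{2/n}S\Big(\int_{M}|\nabla_{g}u_k|^{2}\,\ud v_{g}+\tfrac{n-2}{2}\int_{\pa M}h_{g}u_k^{2}\,\ud s_{g}\Big)+k\big(\|u_k\|_{L^{r_1}(M)}^{2}+\|u_k\|_{L^{r_2}(\pa M)}^{2}\big)<1 .
\]
In particular $\|u_k\|_{L^{r_1}(M)}\to0$ and $\|u_k\|_{L^{r_2}(\pa M)}\to0$. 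Since $r_1<2<2^*$ and $\|u_k\|_{L^{2^*}(M)}=1$, interpolation gives $\|u_k\|_{L^{2}(M)}\to0$; inserting a trace inequality of the form $\|v\|_{L^2(\pa M)}^2\le\va\|\nabla_g v\|_{L^2(M)}^2+C_\va\|v\|_{L^2(M)}^2$ to absorb the sign-indefinite mean-curvature term then shows that $\|\nabla_g u_k\|_{L^2(M)}$ is bounded. Passing to a subsequence, $u_k\rightharpoonup0$ in $H^1(M)$, and since the trace $H^1(M)\hookrightarrow L^2(\pa M)$ is compact, $u_k\to0$ in $L^2(\pa M)$, hence $\int_{\pa M}h_g u_k^2\,\ud s_g\to0$.

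By the concentration--compactness principle (on a manifold with boundary), along a further subsequence $|u_k|^{2^*}\,\ud v_g\rightharpoonup\sum_j\nu_j\delta_{x_j}$ with $\sum_j\nu_j=1$, and $|\nabla_g u_k|^2\,\ud v_g\rightharpoonup\mu\ge\sum_j\mu_j\delta_{x_j}$, where, by the sharp constants recalled in the Introduction (the Aubin--Talenti value and its even reflection to $\R^n_+$), $\mu_j\ge S^{-1}\nu_j^{2/2^*}$ at interior points and $\mu_j\ge(2^{2/n}S)^{-1}\nu_j^{2/2^*}$ at boundary points. Letting $k\to\infty$ in the last display (dropping $k(\cdots)\ge0$ and using $\int_{\pa M}h_gu_k^2\to0$) gives
\[
1\ \ge\ 2^{2/n}S\,\mu(M)\ \ge\ 2^{2/n}\!\!\sum_{x_j\in M\setminus\pa M}\!\!\nu_j^{2/2^*}+\sum_{x_j\in\pa M}\nu_j^{2/2^*}\ \ge\ \sum_j\nu_j\ =\ 1 ,
\]
where the last step uses $\nu_j^{2/2^*}\ge\nu_j$ (since $\nu_j\le1$ and $2/2^*<1$) and $2^{2/n}>1$. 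Equality throughout forces: no interior concentration, a single boundary concentration point $x_0\in\pa M$ with $\nu_{x_0}=1$, no diffuse part ($\mu(M\setminus\{x_0\})=0$), and $\mu_{x_0}=(2^{2/n}S)^{-1}$. Hence $u_k$ concentrates at $x_0$ like a single rescaled half-space bubble $B_{x_k,\lambda_k}$ with $x_k\to x_0$ and $\lambda_k\to0^+$.

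The remaining, and main, step is a refined expansion around this bubble. Working in Fermi coordinates centered at $x_0$ and writing $u_k=\al_k B_{x_k,\lambda_k}+w_k$ with $\al_k\to\al>0$ and $w_k$ a lower-order remainder orthogonal to the bubble family, the standard bubble-estimate machinery should give, for $n\ge7$,
\[
\Big(\int_{M}|u_k|^{2^*}\,\ud v_{g}\Big)^{2/2^*}-2^{2/n}S\Big(\int_{M}|\nabla_{g}u_k|^{2}\,\ud v_{g}+\tfrac{n-2}{2}\int_{\pa M}h_{g}u_k^{2}\,\ud s_{g}\Big)\ \le\ C\big(\|u_k\|_{L^{r_1}(M)}^{2}+\|u_k\|_{L^{r_2}(\pa M)}^{2}\big),
\]
with $C$ independent of $k$; since $u_k\not\equiv0$ the right-hand side is positive, so this contradicts the strict inequality above for $k>C$. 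The three ingredients are: (i) the half-space Sobolev identity (even reflection of \eqref{eq:aubin-talenti}), which makes the leading terms of the two sides cancel; (ii) the algebraic fact that $\tfrac{n-2}{2}h_g$ is exactly the boundary coefficient for which $\tfrac{n-2}{2}\int_{\pa M}h_g u^2$ cancels the first geometric correction to the Sobolev quotient of the bubble on $(M,g)$ --- a term of order $\lambda_k$ proportional to $h_g(x_0)$ --- which is why the mean curvature enters with precisely this constant; and (iii) the restriction $n\ge7$, under which the remaining contributions (from the second-order geometry of $\pa M$ and of $g$, and from $w_k$ and the bubble--remainder interactions) are genuinely of order $\lambda_k^2$, with no logarithmic factors, and are therefore absorbed into $C\|u_k\|_{L^{r_2}(\pa M)}^2\sim C\lambda_k^2$ (the bulk term $\|u_k\|_{L^{r_1}(M)}^2$, whose defining integral over $\R^n_+$ converges precisely when $n\ge7$ and then scales like $\lambda_k^4$, is of strictly lower order and was needed only for the weak-limit reduction). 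The hard part is exactly step (iii): choosing the bubble parameters, proving a coercivity/non-degeneracy estimate for the linearized operator on the orthogonal complement of the bubble family, and tracking the precise rates of $\al_k$, $\lambda_k$ and $\|w_k\|$ so that, once the mean-curvature term is subtracted, the residual defect is $O(\lambda_k^2)$ uniformly; this is also where $n\ge7$ is truly forced, since for $n\le6$ logarithmic or larger corrections appear and the clean remainder $\|u\|_{L^{r_2}(\pa M)}^2$ no longer suffices.
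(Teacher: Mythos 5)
Your outline correctly identifies the strategy (blow-up at a boundary point, bubble decomposition, cancellation of the first-order geometric correction by the $\tfrac{n-2}{2}h_g$ boundary term, and the dimension restriction $n\ge 7$), and your observation about the role of the bulk remainder $\|u\|_{L^{r_1}(M)}^2$ — that it replaces hypothesis \eqref{eq:mainine2} by forcing the weak limit of a contradicting sequence to vanish — matches the paper's argument for \eqref{eq:5-8}. Your scaling arithmetic for $\|U_\lambda\|_{L^{r_1}(\mathbb{R}^n_+)}$ and $\|U_\lambda\|_{L^{r_2}(\partial\mathbb{R}^n_+)}$ is also correct and correctly pins down $n\ge 7$ for the bulk term.

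However, there is a genuine gap: the proposal stops at the point where the proof actually begins. The displayed inequality you claim ``the standard bubble-estimate machinery should give'' \emph{is} the theorem, up to notation; asserting it for an arbitrary contradicting sequence $u_k$ is circular, and you yourself flag step (iii) as ``the hard part.'' What is missing is precisely what the paper spends Sections 2--5 on. Two things are absent in a way that would block the argument as written. First, you work with an arbitrary sequence $u_k$ violating the inequality, not with minimizers of the Rayleigh-type quotient $J_\alpha$. Without the Euler--Lagrange equation \eqref{eq:5-3} you have no PDE, hence no Moser iteration, hence no pointwise bound $u_\alpha\le C\mu_\alpha^{(n-2)/2}\operatorname{dist}(\cdot,x_\alpha)^{2-n}$ (Proposition~\ref{pro:proMoser2} via the Green-function conformal change), and the ``bubble-estimate machinery'' cannot get off the ground. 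Second, even granting a minimizer, the quantitative content — the choice of $(t_\alpha,\xi_\alpha,\lambda_\alpha)$, the coercivity of $Q_\alpha$ on the orthogonal complement $W_\alpha$ (Lemmas~\ref{lem:lemcoercive}--\ref{lem:lemcoercive1}), the energy estimate $\|w_\alpha\|\lesssim \mu_\alpha^2\|U_1\|_{L^{2^{*\prime}}}+\varepsilon_\alpha\|U_1^{r_2-1}\|_{L^{r_2}}+\tilde\varepsilon_\alpha\|U_1^{r_1-1}\|_{L^{r_1}}+\mu_\alpha^{n-2}\|U_1^{2^*-2}\|_{L^{2^{*\prime}}}$, the expansion of the Sobolev quotient (Lemma~\ref{lem:lem4.1}), and the closing comparison $\alpha\|u_\alpha\|_{L^{r_2}(\partial M)}^2\le C\mu_\alpha^2$ against $\|u_\alpha\|_{L^{r_2}(\partial M)}\ge C\mu_\alpha$ — must all be carried out and none of it is. In short, the proposal is a correct map of the terrain but not a proof; the theorem reduces to an unproved claim.

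A minor side remark: your opening partition-of-unity reduction to $L^2$ remainders from Theorem~\ref{thm:mainthm2} does not gain anything here, because Theorem~\ref{thm:mainthm3} is not proved by strengthening that weaker inequality but by a self-contained blow-up; and as stated it leans on the unproved assertion that each small Riemannian ball $(B_i,g)$ satisfies \eqref{eq:mainine2}. That is true but requires a short argument (closeness of the local Yamabe quotient to the Euclidean value $S^{-1}$), which you elide.
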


The inequality \eqref{eq:mainine1} and \eqref{eq:mainine3} are sharp. They would fail if $2^{2 / n} S$ is replaced by any smaller constant. In general, $h_g$ cannot be replaced by any smooth function which is smaller than $h_g$ at some point on $\partial M,$ and $r$, $r_1$, $r_2$ cannot be replaced by any smaller number.

The effect of the scalar curvature in the refined inequalities related to the Aubin conjecture was studied by Druet-Hebey-Vaugon \cite{DHV01}, Druet-Hebey \cite{DH02}, Li-Ricciardi \cite{LiRicciardiA2003} and etc. In particular, Li-Ricciardi \cite{LiRicciardiA2003} obtained  an inequality involving the scalar curvature in the way of the mean curvature in  \eqref{eq:mainine1}, if $n\ge 6$. For the Sobolev trace inequality, closely related results were obtained by Li-Zhu \cite{LiZhuSharp1997} and Jin-Xiong \cite{JinXiongSharp2013, JinXiongA2015}.

The paper is organized as follows. In section 2, we begin to prove Theorem \ref{thm:mainthm2} by a contradiction argument, we establish some preliminary results and use Moser iteration technique to obtain an appropriate upper bound for solutions to a
nonlinear critical exponent elliptic equation with Neumann boundary condition. In section 3, we adapt ideas of Bahri-Coron \cite{BahriCoronThe1991} to prove an energy estimate. In section 4, we finish the proof of Theorem \ref{thm:mainthm2}. Finally, in section 5, we prove Theorem \ref{thm:mainthm3}.

{\bf Notations:} We collect below a list of the main notations used throughout the paper.
\begin{itemize}
  \item For a domain $D \subset \mathbb{R}^{n}$ with boundary $\partial D,$ we denote $\partial^{\prime} D$ as the interior of $\overline{D} \cap \partial \mathbb{R}_{+}^{n}$ in $\partial \mathbb{R}_{+}^{n}=\mathbb{R}^{n-1}$ and $\partial^{\prime \prime} D=\partial D \backslash \partial^{\prime} D$.
  \item For $x_0 \in \mathbb{R}^{n}$, $B_{r}(x_0):=\{x \in \mathbb{R}^{n}:|x-x_0|<r\}$ and $B_{r}^{+}(x_0):=B_{r}(x_0) \cap \mathbb{R}_{+}^{n}$,
where $|x|=\sqrt{x_1^2+x_2^2+\cdots+x_n^2}$ for $x=(x_1,x_2,\cdots,x_n)$. We will not keep writing the center $x_0$ if $x_0=0$.
\end{itemize}

\medskip

\noindent \textbf{Acknowledgement:} J. Xiong would like to thank Tianling Jin for valuable discussions.

\section{Preliminaries}

We prove Theorem \ref{thm:mainthm2} by a contradiction argument. Suppose that for every large $\alpha>0$, there exists $\tilde{u} \in H^{1}(M)$ such that
$$
\Big(\int_{M}|\tilde{u}|^{2^*}\, \ud v_{g} \Big)^{2/2^*}>2^{2 / n} S\Big(\int_{M}|\nabla_{g} \tilde{u}|^{2} \, \ud v_{g}+\frac{n-2}{2} \int_{\partial M} h_{g} \tilde{u}^{2}\, \ud s_g\Big)+\alpha\|\tilde{u}\|_{L^{r}(\partial M)}^{2}.
$$
Define
$$
I_{\alpha}(u)=\frac{\int_{M}|\nabla_{g} u|^{2} \, \ud  v_{g}+\frac{n-2}{2} \int_{\partial M} h_{g} u^{2}\, \ud s_g+\alpha\|u\|_{L^{r}(\partial M)}^{2}}{(\int_{M}|u|^{2^*}\, \ud v_{g} )^{2/2^*}}, \quad \forall \, u \in H^{1}(M) \backslash \{0\}.
$$
It follows from the contradiction hypothesis that for all large $\alpha,$
\be\label{eq:ellalpha<}
\ell_{\alpha}:=\inf _{H^{1}(M) \backslash \{0\}} I_{\alpha}<\frac{1}{2^{2 / n} S}.
\ee

\begin{proposition}\label{pro:E-L}
For every $\alpha>0$, $\ell_{\alpha}$ is achieved by a nonnegative $u_{\alpha} \in H^{1}(M) \backslash \{0\}$ with
\be\label{eq:2*norm=1}
\int_{M} u_{\alpha}^{2^*} \, \ud  v_{g}=1.
\ee
Moreover, $u_{\alpha} \in C^{\infty}(M) \cap C^{1, r-1}(\overline{M})$ and satisfies the Euler-Lagrange equation
\be\label{eq:E-L}
\left\{\begin{array}{ll}
-\Delta_{g} u_{\alpha}=\ell_{\alpha} u_{\alpha}^{2^*-1} & \text { in } M, \\
\frac{\partial_{g} u_{\alpha}}{\partial \nu}=-\frac{n-2}{2} h_{g} u_{\alpha}-\alpha\|u_{\alpha}\|_{L^{r}(\partial M)}^{2-r} u_{\alpha}^{r-1} & \text { on } \partial M.
\end{array}\right.
\ee
\end{proposition}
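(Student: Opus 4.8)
The plan is to establish Proposition \ref{pro:E-L} by the direct method of the calculus of variations, followed by standard elliptic regularity and a maximum principle argument. First I would show that the infimum $\ell_\alpha$ is achieved. Take a minimizing sequence $u_k \in H^1(M)$ with $\int_M |u_k|^{2^*}\,\ud v_g = 1$; since replacing $u_k$ by $|u_k|$ does not increase $I_\alpha$ (the gradient term is unchanged by $\||\nabla_g u|\|=\||\nabla_g |u|\|$, and the boundary terms only depend on $|u|$), we may assume $u_k \geq 0$. The functional in the numerator controls $\|\nabla_g u_k\|_{L^2(M)}^2$ up to the lower-order boundary terms; combined with the normalization and the trace/Sobolev embeddings this gives a uniform $H^1(M)$ bound. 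Hence, up to a subsequence, $u_k \rightharpoonup u_\alpha$ weakly in $H^1(M)$, strongly in $L^p(M)$ for $p < 2^*$, strongly in $L^q(\partial M)$ for $q < 2(n-1)/(n-2)$ (in particular for $q = r$ and $q=2$), and a.e. The gradient term is weakly lower semicontinuous, the boundary mean-curvature term and the $L^r(\partial M)$ term pass to the limit by the strong trace convergence; the only subtle point is the constraint $\int_M |u_\alpha|^{2^*}\,\ud v_g = 1$, which might a priori drop to something smaller. Here I would invoke the strict inequality \eqref{eq:ellalpha<}: the Sobolev inequality \eqref{eq:LZ98-b} (or \eqref{eq:LZ98}, given that \eqref{eq:mainine2} holds) shows that if the $L^{2^*}$ mass were lost, a Brezis--Lieb / concentration-compactness splitting would force $\ell_\alpha \geq 1/(2^{2/n}S)$, contradicting \eqref{eq:ellalpha<}. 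Thus no mass is lost, $u_\alpha$ is a genuine minimizer, $u_\alpha \geq 0$, $u_\alpha \not\equiv 0$.

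Next I would derive the Euler--Lagrange equation \eqref{eq:E-L}. The map $u \mapsto \|u\|_{L^r(\partial M)}^2$ is $C^1$ away from $0$ (since $r > 1$), with derivative in the direction $\varphi$ equal to $2\|u\|_{L^r(\partial M)}^{2-r}\int_{\partial M} |u|^{r-2}u\,\varphi\,\ud s_g$; the other terms are manifestly differentiable. Writing down the constrained critical point condition with Lagrange multiplier and using $\int_M u_\alpha^{2^*}\,\ud v_g = 1$ identifies the multiplier as $\ell_\alpha$, giving \eqref{eq:E-L} in the weak sense (with $u_\alpha^{r-1}$ and $u_\alpha^{2^*-1}$ rather than absolute values, since $u_\alpha \geq 0$).

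The regularity statement comes in two stages. First, $u_\alpha \in L^\infty(M)$: this is where the Moser iteration mentioned in the paper's outline enters — the equation is critical, so one bootstraps starting from $u_\alpha \in L^{2^*}$, treating the right-hand side $\ell_\alpha u_\alpha^{2^*-1}$ as a potential times $u_\alpha$ with the potential in $L^{n/2}$, and handling the Neumann boundary term (note $h_g \in L^\infty$ and $r - 1 < 1$ so the boundary nonlinearity is subcritical and harmless) to get $u_\alpha \in L^p$ for all $p$, hence bounded. Once $u_\alpha$ is bounded, the interior equation has a smooth (bounded, then $C^\infty$ by Schauder bootstrap) right-hand side, so $u_\alpha \in C^\infty(M)$; near the boundary, the Neumann data is $-\frac{n-2}{2}h_g u_\alpha - \alpha \|u_\alpha\|_{L^r(\partial M)}^{2-r} u_\alpha^{r-1}$, which is $C^{r-1}$ because $t \mapsto t^{r-1}$ is Hölder continuous of exponent $r-1 \in (0,1)$ and $u_\alpha$ is Lipschitz once we know $u_\alpha \in C^1$; Schauder estimates for the Neumann problem then give $u_\alpha \in C^{1,r-1}(\overline{M})$. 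Finally, $u_\alpha > 0$ would follow from the strong maximum principle and the Hopf lemma applied to $-\Delta_g u_\alpha + (\text{const})u_\alpha \geq 0$, but the statement only claims nonnegativity, so it suffices to note $u_\alpha \geq 0$ from the construction. The main obstacle is the first stage — ruling out loss of $L^{2^*}$ mass in the weak limit — since the critical exponent makes the embedding $H^1(M) \hookrightarrow L^{2^*}(M)$ noncompact; this is precisely where the hypothesis \eqref{eq:ellalpha<}, and through it the condition \eqref{eq:mainine2}, is indispensable.
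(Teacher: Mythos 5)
Your proof is correct in its essentials, but it takes a genuinely different route from the paper's. The paper invokes the \emph{subcritical-exponent approximation} method of Escobar~\cite{EscobarConformal1992}: minimize $I_\alpha$ with $2^*$ replaced by $q<2^*$ (where the embedding is compact and existence is trivial), get uniform $L^\infty$ bounds on the subcritical minimizers by Moser iteration, and pass to the limit $q\to 2^*$. You instead run the \emph{direct method at the critical exponent}, ruling out loss of $L^{2^*}$-mass via Br\'ezis--Lieb and the threshold inequality $\ell_\alpha < 1/(2^{2/n}S)$. Your splitting argument is sound: writing a minimizing sequence as $u_k = u_\alpha + (u_k-u_\alpha)$, the boundary terms pass to the limit by compactness of the trace embedding for the subcritical exponents $2$ and $r$, and using Lemma~\ref{lem:lem2.2} for $u_k - u_\alpha$ together with the elementary inequality $(1-\theta)^{2/2^*}\geq 1-\theta^{2/2^*}$ forces $\ell_\alpha\geq 1/(2^{2/n}S)$ if $\theta:=\int_M u_\alpha^{2^*}\,\ud v_g<1$, a contradiction. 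Two remarks on the comparison. First, both routes ultimately rely on the strict threshold inequality \eqref{eq:ellalpha<} (Escobar's proposition also needs a sub-threshold energy condition to exclude concentration of the subcritical minimizers), so the scope of validity is the same; the ``for every $\alpha>0$'' in the statement should really be read against the standing hypothesis \eqref{eq:ellalpha<}. Second, a small point you pass over: you are applying a Sobolev inequality with the boundary constant $2^{2/n}S$ to the deficit $u_k-u_\alpha$ regardless of whether concentration occurs at the boundary or in the interior; this is fine precisely because Lemma~\ref{lem:lem2.2} holds for all of $H^1(M)$, and using the worse of the two possible constants is harmless here since $2^{2/n}S>S$. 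Your regularity discussion (Moser iteration for boundedness, then Schauder to $C^\infty(M)\cap C^{1,r-1}(\overline M)$, with the H\"older exponent $r-1$ coming from the boundary nonlinearity $u_\alpha^{r-1}$) is a reasonable sketch of what the paper delegates to Cherrier~\cite{CherrierProblemes1984}; note that getting $L^\infty$ from an $L^{n/2}$ potential requires the Br\'ezis--Kato type refinement of the iteration, which you gesture at but do not make explicit---acceptable at this level of detail, since the paper does not either.
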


\begin{proof}
The existence of a minimizer follows from the standard subcritical exponent approximating method and Moser's iteration argument; see, e.g., \cite[Proposition 2.1]{EscobarConformal1992}. A calculus of variational argument shows that $u_{\alpha}$ is a weak solution of the Euler-Lagrange equation \eqref{eq:E-L}. Finally, $u_{\alpha} \in C^{\infty}(M) \cap C^{1, r-1}(\overline{M})$ follows from the regularity theory in \cite{CherrierProblemes1984}.
\end{proof}

\begin{lemma}\label{lem:lem2.2}
For every $\varepsilon>0,$ there exists a constant $A(\varepsilon)>0$ depending on $\varepsilon$, $M$, and $g$ such that
$$
\Big(\int_{M}|u|^{2^*} \, \ud  v_{g}\Big)^{2 / 2^*} \leq(2^{2 / n} S+\varepsilon) \int_{M}|\nabla_{g} u|^{2} \, \ud  v_{g}+A(\varepsilon)\|u\|_{L^{r}(\partial M)}^{2}
$$
for all $u \in H^{1}(M)$.
\end{lemma}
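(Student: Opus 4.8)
The statement is a standard "$\varepsilon$-subcritical" Sobolev inequality where the lower-order term is the $L^r(\partial M)$ norm on the boundary rather than an $L^2(M)$ or $L^2(\partial M)$ norm. The natural strategy is a partition-of-unity argument combined with the sharp half-space Sobolev inequality \eqref{eq:aubin-talenti}, localized near the boundary, together with the sharp interior Aubin--Talenti inequality in the interior. The plan is: cover $M$ by finitely many coordinate balls, of two types — interior balls (where $\partial M$ does not intrude) and boundary balls (where in geodesic normal coordinates $M$ looks like a small half-ball $B_\rho^+$). On each chart, after a smooth change of coordinates the metric $g$ is $C^0$-close to the Euclidean metric if the chart is small; choosing the radius $\rho$ small enough, the constant $2^{2/n}S$ in the localized inequality is degraded only to $2^{2/n}S+\varepsilon/2$.

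First I would fix the covering and a subordinate partition of unity $\{\eta_i^2\}$ with $\sum_i \eta_i^2 \equiv 1$ on $M$, $\eta_i \in C_c^\infty$. Writing $u^2 = \sum_i (\eta_i u)^2$ and using the elementary inequality $\big(\sum_i a_i\big)^{p} \le \sum_i a_i^p$ for $0<p\le 1$ with $p = 2/2^* < 1$, one gets
\[
\Big(\int_M |u|^{2^*}\,\ud v_g\Big)^{2/2^*} \le \sum_i \Big(\int_M |\eta_i u|^{2^*}\,\ud v_g\Big)^{2/2^*}.
\]
For an interior chart I apply the Euclidean inequality \eqref{eq:aubin-talenti} (with the metric-distortion constant $1+C\rho$) to $\eta_i u$; for a boundary chart I apply the reflected half-space inequality $\big(\int_{\R^n_+}|v|^{2^*}\big)^{2/2^*} \le 2^{2/n}S\int_{\R^n_+}|\nabla v|^2$ to $v=\eta_i u$ extended by zero. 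In both cases $\int |\nabla_g(\eta_i u)|^2 \le (1+C\rho)\int_M |\nabla_g u|^2 \eta_i^2 + C\int_M u^2|\nabla \eta_i|^2$, the last term being an interior $L^2(M)$ quantity with $C$ depending on the fixed partition. Summing over $i$ and absorbing $C\rho\sum_i$ into $\varepsilon/2$ yields
\[
\Big(\int_M |u|^{2^*}\,\ud v_g\Big)^{2/2^*} \le (2^{2/n}S+\tfrac{\varepsilon}{2})\int_M|\nabla_g u|^2\,\ud v_g + C(\varepsilon)\int_M u^2\,\ud v_g.
\]

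The remaining, and genuinely substantive, step is to replace the interior term $C(\varepsilon)\int_M u^2\,\ud v_g$ by $A(\varepsilon)\|u\|_{L^r(\partial M)}^2$ at the cost of another $\varepsilon/2$ in the leading constant; this is where I expect the main work to lie. The tool is an interpolation/compactness inequality of the form: for every $\delta>0$ there is $C_\delta$ with $\int_M u^2\,\ud v_g \le \delta\int_M|\nabla_g u|^2\,\ud v_g + C_\delta\|u\|_{L^r(\partial M)}^2$ for all $u\in H^1(M)$. One proves this by contradiction: if it fails there is a sequence $u_k$ with $\|u_k\|_{L^2(M)}=1$, $\int_M|\nabla_g u_k|^2 \to 0$ along a subsequence with $\delta$ fixed (more precisely $\int|\nabla_g u_k|^2 + \|u_k\|_{L^r(\partial M)}^2 \le \tfrac1k$), so $u_k$ is bounded in $H^1(M)$, hence (Rellich) converges strongly in $L^2(M)$ to some $u$ with $\|u\|_{L^2(M)}=1$ and $\nabla_g u=0$, so $u$ is a nonzero constant; but the compactness of the trace embedding $H^1(M)\hookrightarrow L^r(\partial M)$ (valid since $r=2(n-1)/n < 2(n-1)/(n-2)$, the critical trace exponent) forces $\|u\|_{L^r(\partial M)}=0$, so $u\equiv 0$ on $\partial M$, contradicting $u$ being a nonzero constant. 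Feeding this inequality with $\delta$ chosen so that $C(\varepsilon)\delta < \varepsilon/2$ into the displayed estimate, and renaming constants, completes the proof. The one point to watch is that the contradiction argument must use the \emph{strict} subcriticality $r<2(n-1)/(n-2)$ to get a \emph{compact} trace embedding — the borderline case would only give boundedness and the argument would break.
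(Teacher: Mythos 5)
There is a genuine gap, and it is in exactly the step you flag as the ``genuinely substantive'' one. You claim the interpolation
\[
\int_M u^2\,\ud v_g \;\le\; \delta\int_M|\nabla_g u|^2\,\ud v_g + C_\delta\,\|u\|_{L^r(\partial M)}^2
\]
for \emph{every} $\delta>0$, and prove it by a compactness/contradiction argument. Two things go wrong. First, the negation of the inequality, after normalizing $\|u_k\|_{L^2(M)}=1$, only gives $\delta\int|\nabla_g u_k|^2 + k\|u_k\|_{L^r(\partial M)}^2 < 1$, i.e.\ the gradient integrals are merely \emph{bounded} by $1/\delta$, not tending to $0$; your written chain ``$\int|\nabla_g u_k|^2 + \|u_k\|_{L^r(\partial M)}^2 \le 1/k$'' is not what you get. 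Consequently the $H^1$-weak limit $u$ satisfies $\|u\|_{L^2(M)}=1$ and $u|_{\partial M}=0$, i.e.\ $u$ is a nonzero element of $H^1_0(M)$ --- no contradiction. Second, and more fundamentally, the claimed interpolation is simply false: take any nonzero $u\in H^1_0(M)$; the right side is $\delta\int|\nabla_g u|^2$, which can be made smaller than $\int_M u^2$ by choosing $\delta$ below the reciprocal of the first Dirichlet eigenvalue. So you cannot trade $\int_M u^2$ for an arbitrarily small fraction of the gradient plus a boundary $L^r$ norm.

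This is not a repairable detail; it reflects the fact that your partition-of-unity argument only reaches the unconditional inequality \eqref{eq:LZ98-b}, which inevitably carries an interior $L^2(M)$ remainder, and that term cannot be absorbed into $\varepsilon\int|\nabla_g u|^2$ plus a boundary term. The hypothesis \eqref{eq:mainine2} is precisely what rules out interior concentration and lets Li--Zhu prove \eqref{eq:LZ98} with \emph{no} interior remainder; your proposal never uses \eqref{eq:mainine2} and therefore cannot get there. The paper's proof is short because it takes \eqref{eq:LZ98} as given and only needs the \emph{boundary} interpolation
\[
\int_{\partial M} u^2\,\ud s_g \;\le\; \varepsilon\int_M|\nabla_g u|^2\,\ud v_g + \tilde A(\varepsilon)\,\|u\|_{L^r(\partial M)}^2,
\]
which \emph{is} valid for every $\varepsilon>0$: here the compactness argument does close, because after normalizing $\|u_k\|_{L^2(\partial M)}=1$ one gets bounded gradients, a Poincar\'e-type bound gives $\|u_k\|_{H^1(M)}\le C$, the compact trace embedding into $L^r(\partial M)$ (valid since $r<\tfrac{2(n-1)}{n-2}$) forces the limit to vanish on $\partial M$, and that contradicts $\|u\|_{L^2(\partial M)}=1$. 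Equivalently one can avoid compactness altogether by combining H\"older on $\partial M$ (using $r<2<\tfrac{2(n-1)}{n-2}$) with the sharp trace Sobolev inequality \eqref{eq:lemthm0.1} and Young's inequality. So the correct route is: cite \eqref{eq:LZ98}, interpolate on the boundary only, and do not introduce an interior $L^2$ term at all.
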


\begin{proof}
By compactness, we have that for every $\varepsilon>0$ there exists a positive constant $\tilde{A}(\varepsilon)$ such that
\be\label{eq:Interpolation Inequality}
\int_{\partial M} u^{2} \, \ud  s_{g} \leq \varepsilon \int_{M}|\nabla_{g} u|^{2} \, \ud  v_{g}+\tilde{A}(\varepsilon)\|u\|_{L^{r}(\partial M)}^{2}.
\ee
Hence, the lemma follows from the inequality \eqref{eq:LZ98}.
\end{proof}

\begin{lemma}\label{lem:pro1.3}
Let $(M, g)$ be a smooth compact $n$-dimensional Riemannian manifold with smooth boundary and $n \geq 3$. Then for any $\varepsilon>0$, there exists some constant $B(\varepsilon)$ depending on $\varepsilon$ such that for all $u \in H^{1}(M)$,
\be
\Big(\int_{M}|u|^{2^*} \, \ud  v_{g}\Big)^{2 / 2^*}\leq(S+\varepsilon) \int_{M}|\nabla_{g} u|^{2} \, \ud  v_{g}+B(\varepsilon)(\|u\|_{2, M}^{2}+\|u\|_{L^{{2(n-1)}/{(n-2)}}(\partial M)}^{2}).
\ee
\end{lemma}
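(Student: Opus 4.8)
The plan is to reduce the global inequality on $(M,g)$ to the sharp half-space Sobolev inequality via a partition of unity, exactly as in the classical Aubin-type argument, but keeping track of the boundary. First I would fix a small $\delta>0$ and cover $M$ by finitely many coordinate charts: interior balls $B_\delta(p_i)$ with $p_i$ in the interior, on which $g$ is $C^0$-close to the Euclidean metric, and boundary half-balls $B^+_\delta(q_j)$ with $q_j\in\partial M$, in Fermi (boundary normal) coordinates, so that $g$ is $C^0$-close to the flat metric on $\R^n_+$ and $\partial M$ corresponds to $\partial'B^+_\delta$. Choosing $\delta$ small makes the distortion of volumes and gradients between $g$ and the flat metric as close to $1$ as we like. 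Subordinate to this cover I would take a smooth partition of unity $\{\eta_i^2\}$ (squaring so that $\sum\eta_i^2=1$ with $\eta_i\in C_c^\infty$), and write $\big(\int_M|u|^{2^*}\,\ud v_g\big)^{2/2^*}\le \sum_i\big(\int_M|\eta_i u|^{2^*}\,\ud v_g\big)^{2/2^*}$, using $|u|^{2^*}=\big(\sum\eta_i^2|u|^2\big)^{2^*/2}\le \sum(\eta_i^2|u|^2)^{2^*/2}$ since $2^*/2>1$ — this is where a $(S+\va)$ rather than $S$ is forced, the "almost-subadditivity'' cost being absorbable.

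Next I would estimate each local piece. For an interior chart, $\eta_i u$ has compact support in a Euclidean ball, so the Aubin–Talenti inequality \eqref{eq:aubin-talenti} gives $\big(\int|\eta_i u|^{2^*}\big)^{2/2^*}\le S(1+C\delta)\int|\nabla(\eta_i u)|^2$. For a boundary chart, $\eta_i u$ is supported in $B^+_\delta(q_j)$ with no constraint on $\partial'B^+_\delta$, so the half-space inequality $\big(\int_{\R^n_+}|v|^{2^*}\big)^{2/2^*}\le 2^{2/n}S\int_{\R^n_+}|\nabla v|^2$ quoted in the introduction would give a constant $2^{2/n}S$, which is too big. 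The fix is the standard one for such trace-type statements: reflect. Since $\eta_i u$ vanishes near $\partial''B^+_\delta$, extend it by even reflection across $\{x_n=0\}$ to a function $w$ on a full Euclidean ball; then $\int_{\R^n_+}|\nabla(\eta_i u)|^2=\tfrac12\int_{\R^n}|\nabla w|^2$ and likewise for the $L^{2^*}$ norm, and applying \eqref{eq:aubin-talenti} to $w$ yields $\big(\int_{\R^n_+}|\eta_i u|^{2^*}\big)^{2/2^*}\le S(1+C\delta)\int_{\R^n_+}|\nabla(\eta_i u)|^2$ — the reflection converts the factor $2^{2/n}$ back into $1$. Transporting back to $(M,g)$ costs another $(1+C\delta)$ from the metric distortion.

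Finally I would collect the pieces. Summing over $i$ and expanding $\nabla_g(\eta_i u)=\eta_i\nabla_g u+u\nabla_g\eta_i$, the cross terms and the $|u|^2|\nabla\eta_i|^2$ terms are controlled by $C(\delta)\int_M u^2\,\ud v_g$ via Cauchy–Schwarz with a small parameter (here one uses $\sum_i\nabla\eta_i\cdot\nabla\eta_i$ bounded and $\sum\eta_i^2=1$, so $\sum|\nabla\eta_i|^2=-\sum\eta_i\Delta\eta_i$ or just a direct bound), giving $\sum_i\int_M|\nabla_g(\eta_i u)|^2\,\ud v_g\le (1+C\delta)\int_M|\nabla_g u|^2\,\ud v_g+C(\delta)\int_M u^2\,\ud v_g$. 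Choosing $\delta$ small so that $(1+C\delta)^3 S\le S+\va$ produces the gradient coefficient $S+\va$; the leftover lower-order term is $B(\va)\|u\|_{2,M}^2$, and the $\|u\|^2_{L^{2(n-1)/(n-2)}(\partial M)}$ term is not even needed here — one may simply include it with a nonnegative coefficient, or it enters only if one instead routes the boundary pieces through \eqref{eq:LZ98-b}. I expect the only real point requiring care is the even-reflection step on the boundary charts: one must check that the partition of unity can be chosen so each boundary bump genuinely vanishes near $\partial''B^+_\delta$ and that in Fermi coordinates the reflected metric is still $C^0$-close to flat (it is, since in boundary normal coordinates $g_{in}=\delta_{in}$ on $\{x_n=0\}$, so the reflection is Lipschitz and the $L^\infty$ closeness survives), and that the factor-of-$2^{2/n}$ bookkeeping is done consistently. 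Everything else is the routine Aubin patching argument.
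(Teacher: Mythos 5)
Your boundary-chart step contains an exponent-bookkeeping error that is fatal to the whole approach. You claim that even reflection across $\{x_n=0\}$ converts the half-space constant $2^{2/n}S$ back into $S$, but it does the opposite: if $w$ is the even reflection of $v=\eta_i u$, then $\int_{\R^n}|w|^{2^*}=2\int_{\R^n_+}|v|^{2^*}$ and $\int_{\R^n}|\nabla w|^2=2\int_{\R^n_+}|\nabla v|^2$, so Aubin--Talenti applied to $w$ gives $\big(2\int_{\R^n_+}|v|^{2^*}\big)^{2/2^*}\le 2S\int_{\R^n_+}|\nabla v|^2$, i.e.\ $\big(\int_{\R^n_+}|v|^{2^*}\big)^{2/2^*}\le 2^{1-2/2^*}S\int_{\R^n_+}|\nabla v|^2=2^{2/n}S\int_{\R^n_+}|\nabla v|^2$. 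The factor $2^{1-2/2^*}=2^{2/n}>1$ appears because $2/2^*<1$, so the $L^{2^*}$-norm doubles more slowly than the Dirichlet energy. Even reflection is precisely how one \emph{obtains} the constant $2^{2/n}S$ in display (2) of the introduction; it cannot undo itself.

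This error is not cosmetic, because it hides the essential role of the boundary trace term, which you claim is ``not even needed.'' That claim is false: take $u_\mu(x)=\mu^{-(n-2)/2}U((x-q)/\mu)$ suitably cut off near $q\in\partial M$ and let $\mu\to0$. Then $\|u_\mu\|^2_{L^{2^*}(M)}$, $\|\nabla_g u_\mu\|^2_{L^2(M)}$, and $\|u_\mu\|^2_{L^{2(n-1)/(n-2)}(\partial M)}$ all converge, by scale invariance, to their nonzero half-space analogues, while $\|u_\mu\|^2_{L^2(M)}\to0$. Since $\|U\|^2_{L^{2^*}(\R^n_+)}=2^{2/n}S\,\|\nabla U\|^2_{L^2(\R^n_+)}>(S+\varepsilon)\|\nabla U\|^2_{L^2(\R^n_+)}$ for small $\varepsilon$, the purported inequality without the boundary trace would be violated in the limit. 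The paper proves this lemma by citing Li--Zhu [Proposition 1.3]; the genuine content of that result is exactly the mechanism, missing from your proposal, by which one trades the $2^{2/n}S$-to-$S$ gap in the boundary charts for the $L^{2(n-1)/(n-2)}(\partial M)$ term.
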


\begin{proof}
The proof can be found in Li-Zhu \cite[Proposition 1.3]{LiZhuSharp1998}.
\end{proof}

\begin{lemma}\label{lem:thm0.1}
Let $(M, g)$ be a smooth compact $n$-dimensional Riemannian manifold with smooth boundary and $n \geq 3$. Then there exists a constant $A^{\prime}(M, g)>0$ such that for all $u \in H^{1}(M)$,
\be\label{eq:lemthm0.1}
\Big(\int_{\partial M}|u|^{\frac{2(n-1)}{n-2}} \, \ud s_{g}\Big)^{\frac{n-2}{n-1}} \leq S_1 \int_{M}|\nabla_{g} u|^{2} \, \ud v_{g}+A^{\prime} \int_{\partial M} u^{2} \, \ud s_{g},
\ee
where $S_1=\frac{2}{n-2} \sigma_{n}^{-1 /(n-1)}$ with $\sigma_{n}$ the volume of the unit sphere in $\mathbb{R}^{n}$.
\end{lemma}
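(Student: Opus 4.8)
The plan is to prove \eqref{eq:lemthm0.1} by the concentration--compactness and blow-up scheme of Li--Zhu, which is the model for the argument used below for Theorem~\ref{thm:mainthm2} but technically lighter. Writing $q:=\frac{2(n-1)}{n-2}$, inequality \eqref{eq:lemthm0.1} is equivalent to the assertion that, for $A'$ large enough,
\[
\mu_{A'}:=\inf_{u\in H^1(M)\setminus\{0\}}\frac{S_1\dint_M|\nabla_g u|^2\,\ud v_g+A'\dint_{\partial M}u^2\,\ud s_g}{\big(\dint_{\partial M}|u|^{q}\,\ud s_g\big)^{2/q}}\ \ge\ 1,
\]
and, since $A'\mapsto\mu_{A'}$ is nondecreasing, it suffices to rule out that $\mu_A<1$ for \emph{every} $A>0$. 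The only input from flat space is the sharp Euclidean trace inequality $\|v\|_{L^{q}(\partial\R^n_+)}^{2}\le S_1\|\nabla v\|_{L^2(\R^n_+)}^{2}$ (Escobar, Beckner, Nazaret) together with the classification of its extremals $W$ as rescaled, translated standard bubbles; this is precisely what makes $S_1=\frac{2}{n-2}\sigma_n^{-1/(n-1)}$ the optimal constant.

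Assume $\mu_A<1$ for all $A>0$ and fix $A_j\to\infty$. The energy expansion of the next paragraph, applied to a single concentrating bubble, shows that for $A$ beyond a threshold $A'_\ast$ the resulting ``one-bubble energy level'' equals $1$, so by the concentration--compactness dichotomy $\mu_{A_j}<1$ is attained for $j$ large; let $u_j\ge0$ be minimizers normalized by $\int_{\partial M}u_j^{q}\,\ud s_g=1$, which solve the associated (critical, Neumann-type) Euler-Lagrange problem and, by the regularity used in Proposition~\ref{pro:E-L}, are smooth and $g$-harmonic in $M$. Then $\|\nabla_g u_j\|_{L^2(M)}$ is bounded and $\|u_j\|_{L^2(\partial M)}\le A_j^{-1/2}\to0$, so with a trace-Poincar\'e inequality and the compactness of $H^1(M)\hookrightarrow L^2(\partial M)$ we get, along a subsequence, $u_j\rightharpoonup u$ in $H^1(M)$ with $u|_{\partial M}=0$. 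Applying P.-L.~Lions' concentration--compactness lemma to the critical (hence noncompact) trace embedding $H^1(M)\hookrightarrow L^{q}(\partial M)$ gives, along a further subsequence, points $x_k\in\partial M$ and positive reals $\nu_k,\mu_k$ with $|u_j|^{q}\,\ud s_g\rightharpoonup\sum_k\nu_k\delta_{x_k}$ on $\partial M$ (no diffuse part, as $u|_{\partial M}=0$), $|\nabla_g u_j|^2\,\ud v_g\rightharpoonup \ud\mu\ge\sum_k\mu_k\delta_{x_k}$, $\nu_k^{2/q}\le S_1\mu_k$, and $\sum_k\nu_k=1$. Since $2/q=\frac{n-2}{n-1}\in(0,1)$,
\[
S_1\lim_j\|\nabla_g u_j\|_{L^2(M)}^2=S_1\mu(M)\ \ge\ \sum_k\nu_k^{2/q}\ \ge\ \Big(\sum_k\nu_k\Big)^{2/q}=1,
\]
while the left side is $\le1+o(1)$; hence equality holds throughout, which forces $u\equiv0$, a single concentration point $x_1\in\partial M$ with $\nu_1=1$, $\mu_1=S_1^{-1}$, and $|\nabla_g u_j|^2\,\ud v_g\rightharpoonup S_1^{-1}\delta_{x_1}$.

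It remains to contradict this single-bubble concentration, which is the crux. In Fermi coordinates at $x_1$, rescale $u_j$ at the concentration scale $\va_j\to0^{+}$; the rescaled functions converge in $\dot H^1_{\mathrm{loc}}(\overline{\R^n_+})$ to a Euclidean extremal $W$, which may be taken rotationally symmetric in the tangential variables, and one writes $u_j=W_j+r_j$ with $W_j$ the associated bubble on $M$ and $r_j$ a remainder that, using a pointwise Moser iteration bound on $u_j$ of bubble type, is negligible in the relevant norms. Expanding the metric near $\partial M$ to first order --- where the second fundamental form enters through $\partial_n g_{\alpha\beta}|_{\partial M}$ --- and normalizing $\|W\|_{L^q(\partial\R^n_+)}=1$ (so that $\|\nabla W\|_{L^2(\R^n_+)}^2=S_1^{-1}$), one obtains
\[
S_1\|\nabla_g u_j\|_{L^2(M)}^2+A_j\|u_j\|_{L^2(\partial M)}^2
=1+\va_j\Big(A_j\,\|W\|_{L^2(\partial\R^n_+)}^{2}-S_1\,\kappa\,h_g(x_1)\Big)+o(\va_j),
\]
where $\kappa:=\dint_{\R^n_+}x_n\big((n-3)|\nabla' W|^{2}+(n-1)(\partial_n W)^{2}\big)\,\ud x>0$, all these integrals being finite for $n\ge4$. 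Setting $A'_\ast:=S_1\,\kappa\,(\sup_{\partial M}|h_g|)\,\|W\|_{L^2(\partial\R^n_+)}^{-2}$, for $A_j>A'_\ast$ the bracket is $\ge(A_j-A'_\ast)\|W\|_{L^2(\partial\R^n_+)}^{2}>0$, so the left side --- which equals $\mu_{A_j}$ because $u_j$ is a minimizer with $\int_{\partial M}u_j^{q}\,\ud s_g=1$ --- exceeds $1$ for $j$ large, contradicting $\mu_{A_j}<1$. Thus $\mu_{A'}\ge1$ for every $A'>A'_\ast$, which is \eqref{eq:lemthm0.1} with $A'$ controlled by $\sup_{\partial M}|h_g|$: this is exactly the mechanism by which the $L^2$ boundary term absorbs the mean curvature. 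The hard part is this last step; the soft arguments only produce the blow-up profile, and excising it requires (i) the sharp pointwise decay of $u_j$ by Moser iteration, (ii) a Bahri--Coron-type estimate on $r_j$ making the expansion valid to order $o(\va_j)$, and (iii) a separate treatment of $n=3$, where $\|W\|_{L^2(\partial\R^n_+)}^2=\kappa=+\infty$ and logarithmic corrections appear. All of this is carried out by Li--Zhu, whose theorem may alternatively just be quoted.
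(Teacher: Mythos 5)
The paper does not prove this lemma; it simply cites it from Li--Zhu \cite{LiZhuSharp1997} (Theorem 0.1 there), so the blind attempt is effectively being measured against that reference rather than against anything internal to the present paper. Your outline does follow the Li--Zhu blow-up scheme in spirit --- set $\mu_{A}$, suppose $\mu_A<1$ for all $A$, use subcritical/concentration-compactness arguments to extract minimizers that concentrate at a single boundary point, and then contradict this with an $\varepsilon_j$-expansion of the functional whose first-order coefficient pits $A_j\,\|W\|_{L^2(\partial\R^n_+)}^2$ against a mean-curvature correction --- and the skeleton of the concentration-compactness step (no diffuse part since the weak limit vanishes on $\partial M$, strict subadditivity $\sum\nu_k^{2/q}\ge(\sum\nu_k)^{2/q}$, forcing a single atom with $\mu_1=S_1^{-1}$) is sound.

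The genuine gap is the case $n=3$, which the lemma asserts. Your expansion hinges on $\|W\|_{L^2(\partial\R^n_+)}^2<\infty$ and $\kappa=\int_{\R^n_+}x_n\big((n-3)|\nabla'W|^2+(n-1)(\partial_nW)^2\big)\,\ud x<\infty$, and both diverge when $n=3$ (the boundary trace $W|_{\partial\R^n_+}=(1+|x'|^2)^{-1/2}$ is not square-integrable on $\R^2$, and the $\kappa$ integrand decays like $|x|^{-3}$). You flag this yourself, but flagging it does not close it: for $n=3$ one needs the logarithmic version of the expansion, which changes both the rate at which $A_j\|u_j\|_{L^2(\partial M)}^2$ grows and the scale at which $h_g$ enters, and Li--Zhu handle that case separately. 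Since the lemma is stated for all $n\ge3$, your proof as written covers only $n\ge4$. Beyond this, the energy-expansion coefficients are asserted rather than derived (the factor multiplying $h_g(x_1)$ should carry an extra $\tfrac{1}{n-1}$ coming from the symmetry reduction $\int x_n\,\partial_\alpha W\,\partial_\beta W\,\ud x=\tfrac{\delta_{\alpha\beta}}{n-1}\int x_n|\nabla'W|^2\,\ud x$, though this does not affect the sign or the conclusion), and the Moser-iteration decay for $u_j$ and the Bahri--Coron control of the remainder $r_j$ needed to make the expansion valid to $o(\varepsilon_j)$ are invoked without proof. These are the substantive steps in Li--Zhu; as things stand, the honest route --- which you acknowledge at the end --- is simply to cite their Theorem 0.1, exactly as the paper does.
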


\begin{proof}
The proof can be found in Li-Zhu \cite[Theorem 0.1]{LiZhuSharp1997}.
\end{proof}

Using \eqref{eq:Interpolation Inequality}, we have
$$
\begin{aligned}
I_{\alpha}(u_{\alpha}) &=\int_{M}|\nabla_{g} u_{\alpha}|^{2} \, \ud  v_{g}+\frac{n-2}{2} \int_{\partial M} h_{g} u_{\alpha}^{2}\, \ud s_g+\alpha\|u_{\alpha}\|_{L^{r}(\partial M)}^{2} \\
& \geq(1-\varepsilon \max _{\partial M}|h_{g}|) \int_{M}|\nabla_{g} u_{\alpha}|^{2} \, \ud  v_{g}+(\alpha-{C}(\varepsilon))\|u_{\alpha}\|_{L^{r}(\partial M)}^{2},
\end{aligned}
$$
where $C(\varepsilon)$ is a positive constant depending on $\varepsilon$, $M$, and $g$. Then we derive by \eqref{eq:ellalpha<} that
$$
\int_{M}|\nabla_{g} u_{\alpha}|^{2} \, \ud  v_{g} \leq \frac{2}{2^{2/n}S}
$$
and
\be\label{eq:limrnorm0}
\left\|u_{\alpha}\right\|_{L^{r}(\partial M)} \rightarrow 0 \quad \text { as } \alpha \rightarrow \infty.
\ee
It follows that $u_{\alpha} \rightharpoonup \bar{u}$ in $H^{1}(M)$ for some $\bar{u} \in H_{0}^{1}(M)$.

We claim that, as $\alpha \rightarrow \infty$,
\be\label{eq:limellalpha}
\ell_{\alpha} \rightarrow \frac{1}{2^{2/n}S}
\ee
and
\be\label{eq:limitalphaualpha}
\alpha\left\|u_{\alpha}\right\|_{L^{r}(\partial M)}^{2} \rightarrow 0.
\ee
Indeed, by Lemma \ref{lem:lem2.2} and \eqref{eq:Interpolation Inequality}, for every $\varepsilon>0$,
$$
\begin{aligned}
1 & \leq(2^{2/n}S+\varepsilon/2) \int_{M}|\nabla_{g} u_{\alpha}|^{2} \, \ud  v_{g}+A(\varepsilon)\|u_{\alpha}\|_{L^{r}(\partial M)}^{2} \\
& \leq(2^{2/n}S+\varepsilon) \ell_{\alpha}+(2 A(\varepsilon)-\alpha 2^{2/n}S)\|u_{\alpha}\|_{L^{r}(\partial M)}^{2}.
\end{aligned}
$$
Thus
$$
\frac{1}{2^{2/n}S+\varepsilon} \leq \ell_{\alpha}<\frac{1}{2^{2/n}S}
$$
and
$$
\frac{1}{2} \alpha 2^{2/n}S\|u_{\alpha}\|_{L^{r}(\partial M)}^{2} \leq(2^{2/n}S+\varepsilon) \frac{1}{2^{2/n}S}-1=\frac{\varepsilon}{2^{2/n}S},
$$
if $\alpha 2^{2/n}S>4 A(\varepsilon) .$ Hence, the claim follows.

Let $x_{\alpha}$ be a maximum point of $u_{\alpha},$ and set $\mu_{\alpha}:=u_{\alpha}(x_{\alpha})^{-2 /(n-2)}$.

\begin{lemma}\label{lem:lemalphato0}
We have
\be\label{eq:limitalphamualpha}
\lim _{\alpha \rightarrow \infty} \alpha \mu_{\alpha}^{2}=0.
\ee
\end{lemma}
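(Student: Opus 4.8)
The plan is to combine the concentration information already extracted---namely that $\|u_\alpha\|_{L^r(\partial M)}\to 0$, that $\alpha\|u_\alpha\|_{L^r(\partial M)}^2\to 0$, and that $\ell_\alpha\to (2^{2/n}S)^{-1}$ with $\int_M|\nabla_g u_\alpha|^2\,dv_g$ bounded---with a rescaling of $u_\alpha$ around the maximum point $x_\alpha$. First I would observe that $\mu_\alpha\to 0$: if not, a subsequence of $u_\alpha$ stays uniformly bounded, and then elliptic estimates for \eqref{eq:E-L} together with $\|u_\alpha\|_{L^r(\partial M)}\to 0$ would force $u_\alpha$ to converge strongly, contradicting $\int_M u_\alpha^{2^*}\,dv_g=1$ and $\ell_\alpha\to(2^{2/n}S)^{-1}$ (the limit would have to be a nonzero extremal for the pure Sobolev constant supported in $H_0^1(M)$, which \eqref{eq:mainine2} forbids). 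So blow-up genuinely occurs.

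Next I would set up the standard blow-up: define $v_\alpha(y)=\mu_\alpha^{(n-2)/2}u_\alpha(\exp_{x_\alpha}(\mu_\alpha y))$ in suitable boundary normal (Fermi) coordinates, so that $v_\alpha(0)=1=\max v_\alpha$, $v_\alpha$ solves a rescaled version of \eqref{eq:E-L} on an expanding domain, and $v_\alpha\to (1+c_n|y|^2)^{-(n-2)/2}$ (a standard bubble) in $C^2_{loc}$, either on all of $\R^n$ or on a half-space $\R^n_+$ depending on whether $\mathrm{dist}(x_\alpha,\partial M)/\mu_\alpha$ stays bounded. The Neumann boundary term in \eqref{eq:E-L} after rescaling carries a factor $\mu_\alpha$ in front of $h_g$ and a factor $\alpha\|u_\alpha\|_{L^r(\partial M)}^{2-r}\mu_\alpha^{(n-2)/2 - (n-2)(r-1)/2 + \text{something}}$ in front of the $u_\alpha^{r-1}$ term; I would track the exact power of $\mu_\alpha$ produced by $r=\tfrac{2(n-1)}{n}$ and the volume scaling $ds_g\sim\mu_\alpha^{n-1}$, and show the limiting boundary condition is the homogeneous Neumann one, consistent with the half-space bubble obtained by even reflection.

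Then comes the quantitative step that actually yields \eqref{eq:limitalphamualpha}. I would test the equation \eqref{eq:E-L} against $u_\alpha$ itself to get the Pohozaev-type/energy identity $\int_M|\nabla_g u_\alpha|^2\,dv_g+\tfrac{n-2}{2}\int_{\partial M}h_g u_\alpha^2\,ds_g+\alpha\|u_\alpha\|_{L^r(\partial M)}^2=\ell_\alpha$, and combine it with the lower bound coming from \eqref{eq:Interpolation Inequality} already displayed in the text. The key is to bound $\alpha\mu_\alpha^2$ by comparing two quantities of the same scale: on one hand $\alpha\|u_\alpha\|_{L^r(\partial M)}^2$, and on the other a lower bound for $\|u_\alpha\|_{L^r(\partial M)}^r$ in terms of $\mu_\alpha$. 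From the blow-up profile, the $L^r$ mass of $u_\alpha$ on $\partial M$ near $x_\alpha$ is comparable to $\mu_\alpha^{(n-2)/2 \cdot r}\cdot \mu_\alpha^{-(n-2)/2\cdot r}\cdot(\text{something})$---more precisely, changing variables $\int_{\partial M}u_\alpha^r\,ds_g\gtrsim \mu_\alpha^{(n-1)-\frac{n-2}{2}r}\int_{\R^{n-1}}(1+|y'|^2)^{-\frac{n-2}{2}r}\,dy'$, and with $r=\tfrac{2(n-1)}{n}$ one checks $(n-1)-\tfrac{n-2}{2}r = (n-1)(1-\tfrac{n-2}{n}) = \tfrac{2(n-1)}{n}=r$, and the spatial integral converges precisely because $\tfrac{n-2}{2}r>\tfrac{n-1}{2}\cdot\tfrac{... }{}$ --- so $\|u_\alpha\|_{L^r(\partial M)}^r\gtrsim \mu_\alpha^{r}$, i.e. $\|u_\alpha\|_{L^r(\partial M)}\gtrsim \mu_\alpha$ (when $x_\alpha$ is close enough to $\partial M$; if $\mathrm{dist}(x_\alpha,\partial M)/\mu_\alpha\to\infty$ one gets an even smaller contribution and the estimate is easier). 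Hence $\alpha\mu_\alpha^2\lesssim \alpha\|u_\alpha\|_{L^r(\partial M)}^2\to 0$ by \eqref{eq:limitalphaualpha}.

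The main obstacle I anticipate is the case distinction on the rate $d_\alpha:=\mathrm{dist}(x_\alpha,\partial M)/\mu_\alpha$: if $d_\alpha\to\infty$ the bubble concentrates in the interior and its $L^r(\partial M)$ mass is governed by the tail $(d_\alpha^2+|y'|^2)^{-(n-2)/2}$ rather than by $(1+|y'|^2)^{-(n-2)/2}$, so the lower bound $\|u_\alpha\|_{L^r(\partial M)}\gtrsim\mu_\alpha$ degrades and one must argue more carefully (or combine with a separate argument showing $\alpha\mu_\alpha^2\to0$ directly from the energy defect). A secondary technical point is making the change of variables on $\partial M$ honest---the Fermi coordinates distort $ds_g$ by a factor $1+O(\mu_\alpha|y'|)$, and one needs the $C^1$ bound on $u_\alpha$ from Proposition \ref{pro:E-L} plus the $C^2_{loc}$ convergence of $v_\alpha$ to justify passing to the limit in the rescaled boundary integral. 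Once these are handled, \eqref{eq:limitalphamualpha} follows.
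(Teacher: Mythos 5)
There is a genuine gap, and also a logical‐order problem, in your proposal. In the paper, Lemma~\ref{lem:lemalphato0} is proved \emph{before} the blow-up analysis is carried out, and this ordering is not cosmetic: the proof that the rescaled functions $v_\alpha$ converge to a half-space bubble with a \emph{homogeneous} Neumann condition uses $\alpha\mu_\alpha^2\to0$ to control the rescaled boundary coefficient $\varepsilon_\alpha=\alpha\mu_\alpha^{n-1-\frac{n-2}{2}r}\|u_\alpha\|_{L^r(\partial M)}^{2-r}$ (see \eqref{eq:varepsilonalpha}--\eqref{eq:varepsilon2}). Your proposal wants to run the implication the other way: read off $\|u_\alpha\|_{L^r(\partial M)}\gtrsim\mu_\alpha$ from the blow-up profile, then conclude $\alpha\mu_\alpha^2\lesssim\alpha\|u_\alpha\|_{L^r(\partial M)}^2\to0$. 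As written this is circular, because you have no a priori control of $\varepsilon_\alpha$, and precisely that control is what you are trying to establish. On top of this, you correctly identify but do not resolve the second problem: if $d_\alpha=\mathrm{dist}(x_\alpha,\partial M)/\mu_\alpha\to\infty$, the bubble's trace on $\partial M$ degenerates and the lower bound $\|u_\alpha\|_{L^r(\partial M)}\gtrsim\mu_\alpha$ fails outright; your earlier parenthetical that this case is ``even easier'' is a slip, and your final paragraph rightly flags it as the main obstacle. In the paper this case is only eliminated later, in Lemma~\ref{lem:lemT=0}, after the present lemma has already been used.

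The paper's actual argument is much more elementary and sidesteps the blow-up entirely. It first proves
\[
\liminf_{\alpha\to\infty}\|u_\alpha\|_{L^{2(n-1)/(n-2)}(\partial M)}>0
\]
by contradiction: if this trace norm vanished along a subsequence, then the Br\'ezis--Lieb lemma, the splitting of Dirichlet energy, Lemma~\ref{lem:pro1.3} applied to $u_\alpha-\hat u$, the minimality of $\ell_\alpha$, and the strict inequality $\ell_\alpha<(2^{2/n}S)^{-1}$ force $\|u_\alpha-\hat u\|_{L^{2^*}(M)}\to0$, whence $\hat u\in H_0^1(M)$ has $\int_M\hat u^{2^*}\,dv_g=1$ and $\big(\int_M\hat u^{2^*}\big)^{2/2^*}\ge 2^{2/n}S\int_M|\nabla_g\hat u|^2$, contradicting \eqref{eq:mainine2}. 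Once the trace is bounded below, the conclusion is purely algebraic: since $u_\alpha\le\mu_\alpha^{-(n-2)/2}$ and $\tfrac{n-2}{2}\big(\tfrac{2(n-1)}{n-2}-r\big)=r$ for $r=\tfrac{2(n-1)}{n}$, one has $\mu_\alpha^{r}\,u_\alpha^{2(n-1)/(n-2)}\le u_\alpha^{r}$ pointwise, and therefore
\[
\alpha\mu_\alpha^2\;\le\;C\,\alpha\mu_\alpha^2\Big(\int_{\partial M}u_\alpha^{\frac{2(n-1)}{n-2}}\,ds_g\Big)^{2/r}\;\le\;C\,\alpha\Big(\int_{\partial M}u_\alpha^{r}\,ds_g\Big)^{2/r}\;=\;C\,\alpha\|u_\alpha\|_{L^r(\partial M)}^2\;\to\;0.
\]
So your reduction to $\alpha\|u_\alpha\|_{L^r(\partial M)}^2\to0$ is the right target, but the lower bound on the boundary mass must come from the variational structure (Br\'ezis--Lieb plus the nonexistence hypothesis \eqref{eq:mainine2}), not from a blow-up profile that you are not yet entitled to invoke.
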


\begin{proof}
We first claim
\be\label{eq:claim}
\liminf _{\alpha \rightarrow \infty}\|u_{\alpha}\|_{L^{2(n-1)/(n-2)}(\partial M)}>0.
\ee
If the claim were false, i.e., $\|u_{\alpha}\|_{L^{{2(n-1)}/(n-2)}(\partial M)} \rightarrow 0$ along a subsequence $\alpha \rightarrow \infty$. Then, by \eqref{eq:ellalpha<} and \eqref{eq:2*norm=1}, there would exist $\hat{u} \in H_{0}^{1}(M)$ such that $u_{\alpha}$ weakly converges to $\hat{u}$. It follows from Br\'ezis-Lieb lemma that $u_{\alpha}$ and $\hat{u}$ satisfy
\be\label{eq:BrezisLieb1}
\int_{M} u_{\alpha}^{2^*}\, \ud v_g-\int_{M}|u_{\alpha}-\hat{u}|^{2^*}\, \ud v_g-\int_{M} \hat{u}^{2^*}\, \ud v_g \rightarrow 0 \quad \text { as } \alpha \rightarrow \infty,
\ee
and, in view of \eqref{eq:2*norm=1},
\be\label{eq:BrezisLieb2}
\int_{M}|u_{\alpha}-\hat{u}|^{2^*}\, \ud v_g \leq 1+o(1), \quad \int_{M} \hat{u}^{2^*}\, \ud v_g \leq 1,
\ee
where and throughout this paper $o(1) \rightarrow 0$ as $\alpha \rightarrow \infty$. By Lemma \ref{lem:pro1.3} applied to $u_{\alpha}-\hat{u},$ we have that for any $\varepsilon>0$,
\be\label{eq:lemmaapplied}
\int_{M}|\nabla_{g}(u_{\alpha}-\hat{u})|^{2}\, \ud v_g\geq(1 / S-\varepsilon)\Big(\int_{M}|u_{\alpha}-\hat{u}|^{2^*}\, \ud v_g\Big)^{2 / 2^*}.
\ee
By the definition of $\ell_{\alpha},$ we have
\be\label{eq:defiell}
\int_{M}|\nabla_{g} \hat{u}|^{2}\, \ud v_g\geq \ell_{\alpha}\Big(\int_{M} \hat{u}^{2^*}\, \ud v_g\Big)^{2/2^*}.
\ee
By the compact embedding of $H^{1}(M)$ to $L^{2}(M)$ and $L^{2}(\partial M)$, \eqref{eq:BrezisLieb1}, \eqref{eq:BrezisLieb2}, \eqref{eq:lemmaapplied} and \eqref{eq:defiell}, for any $\varepsilon>0$, we have
$$
\begin{aligned}
\ell_{\alpha} &=\int_{M}|\nabla_{g} u_{\alpha}|^{2}\, \ud v_g +\frac{n-2}{2} \int_{\partial M} h_{g} u_{\alpha}^{2}\, \ud s_g+\alpha\|u_{\alpha}\|_{L^{r}(\partial M)}^{2} \\
&=\int_{M}|\nabla_{g}(u_{\alpha}-\hat{u})|^{2}\, \ud v_g+\int_{M}|\nabla_{g} \hat{u}|^{2}\, \ud v_g+\alpha\|u_{\alpha}\|_{L^{r}(\partial M)}^{2}+o(1) \\
& \geq(1 / S-\varepsilon)\Big(\int_{M}|u_{\alpha}-\hat{u}|^{2^*}\, \ud v_g\Big)^{2 / 2^*}+\ell_{\alpha}\Big(\int_{M} \hat{u}^{2^*}\, \ud v_g\Big)^{2 / 2^*}+\alpha\|u_{\alpha}\|_{L^{r}(\partial M)}^{2}+o(1) \\
& \geq(1 / S-\varepsilon)\int_{M}|u_{\alpha}-\hat{u}|^{2^*}\, \ud v_g+\ell_{\alpha}\int_{M} \hat{u}^{2^*}\, \ud v_g+\alpha\|u_{\alpha}\|_{L^{r}(\partial M)}^{2}+o(1) \\
& =(1 / S-\varepsilon-\ell_{\alpha}) \int_{M}|u_{\alpha}-\hat{u}|^{2^*}\, \ud v_g+\ell_{\alpha}+\alpha\|u_{\alpha}\|_{L^{r}(\partial M)}^{2}+o(1).
\end{aligned}
$$
Taking $\varepsilon$ small enough and using \eqref{eq:ellalpha<}, we can derive that $\|u_{\alpha}-\hat{u}\|_{L^{2^*}(M)} \rightarrow 0 .$ In particular, in view of \eqref{eq:2*norm=1}, $\int_{M} \hat{u}^{2^*}\,\ud v_g=1 .$ This, together with \eqref{eq:ellalpha<}, implies that
$$
\Big(\int_{M}|\hat{u}|^{2^*} \, \ud  v_{g}\Big)^{2 / 2^*} \geq 2^{2 / n} S\int_{M}|\nabla_{g} \hat{u}|^{2} \, \ud  v_{g},
$$
which contradicts to \eqref{eq:mainine2}. This establishes \eqref{eq:claim}.

It follows from \eqref{eq:claim}, the definition of $\mu_{\alpha},$ and \eqref{eq:limitalphaualpha} that as $\alpha \rightarrow \infty$,
\be\label{eq:varepsilon1}
\alpha \mu_{\alpha}^{2} \leq C \alpha \mu_{\alpha}^{2} \Big(\int_{\partial M} u_{\alpha}^{\frac{2(n-1)}{n-2}} \, \ud  s_{g}\Big)^{2/r} \leq C \alpha \Big(\int_{\partial M} u_{\alpha}^{r}\, \ud s_g\Big)^{2/r} \rightarrow 0.
\ee
The proof of Lemma \ref{lem:lemalphato0} is completed.
\end{proof}

Let $(y^{1}, \cdots, y^{n-1}, y^{n})$ denote some geodesic normal coordinates given by the exponential map $\exp _{x_{\alpha}}$. In this coordinate system, the metric $g$ is given by $g_{i j}(y) \, \ud  y^{i} \, \ud  y^{j} .$

Let $\delta_{1}>0$ be the injectivity radius of $(M, g)$, we define $v_{\alpha}$ in a neighborhood of $z=0$ by
$$
v_{\alpha}(z)=\mu_{\alpha}^{(n-2) / 2} u_{\alpha}(\exp _{x_{\alpha}}(\mu_{\alpha} z)), \quad z \in O_{\alpha} \subset \mathbb{R}^{n},
$$
where
$$
O_{\alpha}=\{z \in \mathbb{R}^{n}:|z|<\delta_{1} / \mu_{\alpha},\, \exp _{x_{\alpha}}(\mu_{\alpha} z) \in M\}.
$$
We write $\partial O_{\alpha}=\Gamma_{\alpha}^{1} \cup \Gamma_{\alpha}^{2},$ where
$$
\Gamma_{\alpha}^{1}=\{z \in \partial O_{\alpha}: \exp _{x_{\alpha}}(\mu_{\alpha} z) \in \partial M\}, \quad \Gamma_{\alpha}^{2}=\{z \in \partial O_{\alpha}: \exp _{x_{\alpha}}(\mu_{\alpha} z) \in M\}.
$$
It follows from \eqref{eq:E-L} that $v_{\alpha}$ satisfies
$$
\left\{\begin{array}{ll}
-\Delta_{g_{\alpha}} v_{\alpha}=\ell_{\alpha} v_{\alpha}^{2^*-1} & \text { in } O_{\alpha}, \\
\frac{\partial_{g_{\alpha}} v_{\alpha}}{\partial \nu}=-\frac{n-2}{2}h_{\alpha}v_{\alpha}-\varepsilon_{\alpha}v_{\alpha}^{r-1} & \text { on } \Gamma_{\alpha}^{1}, \\
v_{\alpha}(0)=1,\quad  0 \leq v_{\alpha} \leq 1,
\end{array}\right.
$$
where $g_{\alpha}=g_{i j}(\mu_{\alpha} z) \, \ud z^{i} \, \ud z^{j}$, $h_{\alpha}$ is the mean curvature of $\Gamma_{\alpha}^{1}$ with respect to the metric $g_{\alpha}$ which satisfies $|h_{\alpha}|\leq C\mu_{\alpha}$, and
\be\label{eq:varepsilonalpha}
\varepsilon_{\alpha}:=\alpha \mu_{\alpha}^{n-1-\frac{n-2}{2} r}\|u_{\alpha}\|_{L^{r}(\partial M)}^{2-r}.
\ee
It follows from \eqref{eq:varepsilon1} that
\be\label{eq:varepsilon2}
\varepsilon_{\alpha}=\alpha\|u_{\alpha}\|_{L^{r}(\partial M)}^{2}\Big(\frac{\alpha\mu_{\alpha}^{2}}{\alpha(\int_{\partial M} u_{\alpha}^{r} \, \ud s_{g})^{2/r}}\Big)^{r/2} \leq C\alpha\|u_{\alpha}\|_{L^{r}(\partial M)}^{2} \rightarrow 0 \quad \text { as } \alpha \rightarrow \infty.
\ee
By the standard elliptic equations theory, for all $R>1$,
$$
\|v_{\alpha}\|_{C^{1,r-1}(B_{R} \cap \overline{O_{\alpha}})} \leq C(R) \quad \text { for all sufficiently large } \alpha.
$$
Up to a subsequence, let $\lim \limits_{\alpha \rightarrow \infty} \operatorname{dist}(x_{\alpha}, \partial M) / \mu_{\alpha}=T \in[0, \infty]$. Therefore $v_{\alpha} \rightarrow U$ in $C_{l o c}^{1}(\overline{\mathbb{R}_{-T}^{n}})$ for some $U \in C^{\infty}(\mathbb{R}_{-T}^{n}) \cap  C_{l o c}^{1, r-1}(\overline{\mathbb{R}_{-T}^{n}})$ which satisfies
\be\label{eq:U-equation}
\left\{\begin{array}{ll}
-\Delta U=\frac{1}{2^{2 / n} S} U^{2^*-1} & \text { in } \mathbb{R}_{-T}^{n}, \\
\frac{\partial U}{\partial \nu}=0 & \text { on } \partial \mathbb{R}_{-T}^{n}, \\
U(0)=1, \quad 0 \leq U(x) \leq 1,
\end{array}\right.
\ee
where $\mathbb{R}_{-T}^{n}=\{z=(z^{\prime}, z_{n}): z_{n}>-T\}$ for $0 \leq T<\infty ;$ for $T=\infty$, $\mathbb{R}_{-\infty}^{n}=\mathbb{R}^{n}$ and there is no boundary condition in the above.

\begin{lemma}\label{lem:lemT=0}
We have
$$
T=0.
$$
\end{lemma}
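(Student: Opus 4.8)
The plan is to argue by contradiction and use the energy constraint $\int_M |\nabla_g u_\alpha|^2 \, \ud v_g \le 2/(2^{2/n}S)$ together with the sharp constant characterization. Suppose $T>0$. Then the limit profile $U$ solves \eqref{eq:U-equation} on a half-space $\mathbb{R}^n_{-T}$ (or on all of $\mathbb{R}^n$ when $T=\infty$) with homogeneous Neumann boundary condition and $U(0)=1$. In the case $0<T<\infty$, one reflects $U$ evenly across $\partial \mathbb{R}^n_{-T}$ to obtain a nontrivial nonnegative solution $\widetilde U$ of $-\Delta \widetilde U = \tfrac{1}{2^{2/n}S} \widetilde U^{2^*-1}$ on all of $\mathbb{R}^n$; in the case $T=\infty$, $U$ itself is such a solution. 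By the classification of positive solutions to the critical equation (Caffarelli–Gidas–Spruck), $\widetilde U$ must be one of the Aubin–Talenti bubbles, and the corresponding Sobolev quotient computes the extremal value $S$, i.e. $\|\nabla \widetilde U\|_{L^2(\mathbb{R}^n)}^2 = \tfrac{1}{2^{2/n}S}\|\widetilde U\|_{L^{2^*}(\mathbb{R}^n)}^{2^*}$ forces $\|\widetilde U\|_{L^{2^*}(\mathbb{R}^n)}^{2/2^*} = 2^{2/n}S \,\|\nabla \widetilde U\|_{L^2}^2$ only after matching constants — more usefully, a fixed amount of $L^{2^*}$-mass and $L^2$-Dirichlet energy of $\widetilde U$ is concentrated near the origin.

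The key quantitative step is to transfer this to a lower bound on $\int_M |\nabla_g u_\alpha|^2$ that is incompatible with the bound $2/(2^{2/n}S)$. Concretely, I would fix a large radius $R$ and use $v_\alpha \to U$ in $C^1_{loc}$ to get
$$
\int_{B_R \cap \overline{O_\alpha}} |\nabla_{g_\alpha} v_\alpha|^2 \, \ud v_{g_\alpha} \to \int_{B_R \cap \overline{\mathbb{R}^n_{-T}}} |\nabla U|^2 \, \ud z,
$$
and the scaling invariance of the Dirichlet integral gives $\int_{B_R \cap \overline{O_\alpha}} |\nabla_{g_\alpha} v_\alpha|^2 = \int_{\exp_{x_\alpha}(\mu_\alpha(B_R \cap \overline{O_\alpha}))} |\nabla_g u_\alpha|^2 \, \ud v_g \le \int_M |\nabla_g u_\alpha|^2 \, \ud v_g$. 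Hence
$$
\frac{2}{2^{2/n}S} \ge \liminf_{\alpha\to\infty}\int_M |\nabla_g u_\alpha|^2 \, \ud v_g \ge \int_{B_R \cap \overline{\mathbb{R}^n_{-T}}} |\nabla U|^2\, \ud z
$$
for every $R$, so $\int_{\mathbb{R}^n_{-T}} |\nabla U|^2 \le 2/(2^{2/n}S)$. When $T>0$ the even reflection $\widetilde U$ then has $\int_{\mathbb{R}^n}|\nabla \widetilde U|^2 \le 2\cdot \tfrac{1}{2}\cdot$ (half-space energy) — I must be careful here: when $0<T<\infty$ the reflected solution doubles both integrals, while when $T=\infty$ no reflection is needed and $\int_{\mathbb{R}^n}|\nabla U|^2 \le 2/(2^{2/n}S)$ directly. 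In either subcase, combining with the optimal Euclidean inequality $\|\widetilde U\|_{L^{2^*}}^2 \le 2^{2/n} S \|\nabla \widetilde U\|_{L^2}^2$ (half-space version for $0<T<\infty$) and the fact that $\widetilde U$ is an extremal — so equality holds — one pins down $\|\nabla \widetilde U\|_{L^2}^2 = 1/(2^{2/n}S)$ exactly when $T=\infty$ (the whole bubble) and a strictly larger value is forced when $0<T<\infty$ because the reflected configuration is \emph{not} a single Euclidean bubble and thus cannot be extremal for the full-space inequality, contradicting the equality that the limiting equation would require. The cleaner route: show that the scaling/reflection forces $U$ to carry $L^{2^*}$-mass exactly $1$ in the limit (using \eqref{eq:2*norm=1} and absence of other concentration via the sharp constant as in Lemma \ref{lem:lemalphato0}), so $\int_{\mathbb{R}^n_{-T}} U^{2^*} = 1$; for $0<T<\infty$ the even reflection then has total mass $2$ and Dirichlet energy at most $4/(2^{2/n}S)\cdot\tfrac12$, which plugged into the full-space Sobolev inequality with the extremal equality constraint yields a numerical contradiction, while $T=\infty$ is ruled out because it would mean $\mathrm{dist}(x_\alpha,\partial M)/\mu_\alpha\to\infty$ yet \eqref{eq:claim} forces boundary mass to survive at the bubble scale.

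The main obstacle, and the step needing the most care, is the mass/energy bookkeeping that rules out $T=\infty$: one must show the boundary $L^{2(n-1)/(n-2)}$-mass that \eqref{eq:claim} guarantees is non-negligible actually concentrates at the scale $\mu_\alpha$ near $x_\alpha$ (so that it "sees" $\partial M$ in the blow-up, forcing $T<\infty$), rather than escaping to another part of $\partial M$. I would handle this by a standard concentration-compactness / Pohozaev-type localization: test \eqref{eq:E-L} against suitable cutoffs of $u_\alpha$, use $\|u_\alpha\|_{L^r(\partial M)}\to 0$ and $\alpha\|u_\alpha\|_{L^r(\partial M)}^2\to 0$ from \eqref{eq:limitalphaualpha} to kill the boundary nonlinearity, and combine with the sharp inequality \eqref{eq:mainine2}/\eqref{eq:LZ98} to conclude there is a single concentration point which must lie on (or at bounded rescaled distance from) $\partial M$; then the only possibility is $T=0$. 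Once $T=\infty$ and $0<T<\infty$ are both excluded, $T=0$ follows.
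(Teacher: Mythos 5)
Your outline identifies the right ingredients (blow-up, even reflection, CGS classification, the energy/mass constraint coming from the normalization $\int_M u_\alpha^{2^*}=1$), but both concluding contradictions are mishandled, and the paper's actual argument is considerably cleaner on each point.

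For $0<T<\infty$: after reflecting across $\{z_n=-T\}$ you obtain a positive solution $\widetilde U$ of $-\Delta\widetilde U = 2^{-2/n}S^{-1}\widetilde U^{2^*-1}$ on $\mathbb{R}^n$. You then correctly invoke Caffarelli--Gidas--Spruck (equivalently Gidas--Ni--Nirenberg) to say $\widetilde U$ must be a bubble, but you never extract the contradiction from this. The contradiction is not an energy count: it is that $\widetilde U$ has two distinct local maxima, one at the origin (since $v_\alpha(0)=1\geq v_\alpha$ everywhere forces $U(0)=1=\max U$) and one at the reflected point $(0,\dots,0,-2T)$ with $T>0$; no Aubin--Talenti bubble admits two maxima. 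Your alternative ``numerical contradiction'' does not work: with half-space mass $1$ the reflection carries $L^{2^*}$-mass $2$ and Dirichlet energy $2\int_{\mathbb{R}^n_{-T}}|\nabla U|^2 \le 4/(2^{2/n}S)$, while the extremal identity for a genuine bubble of mass $2$ would give energy exactly $2^{(n-2)/n}/S = 2/(2^{2/n}S)$ --- an inequality consistent with your bound, not a contradiction. You also have not justified that the half-space mass equals $1$.

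For $T=\infty$: you propose to rule this out by showing the boundary $L^{2(n-1)/(n-2)}$-mass from \eqref{eq:claim} must concentrate at scale $\mu_\alpha$ near $x_\alpha$, and you flag this localization as the hard step needing a concentration-compactness or Pohozaev argument. The paper sidesteps this entirely with a one-line mass count: if $T=\infty$, then by CGS $U$ is a full-space bubble solving the equation with the specific constant $2^{-2/n}S^{-1}$, which forces $\int_{\mathbb{R}^n}U^{2^*}=2$ (combine the extremal Sobolev quotient $\|\nabla U\|_{L^2}^2 = S^{-1}\|U\|_{L^{2^*}}^2$ with the Pohozaev/test-function identity $\|\nabla U\|_{L^2}^2 = 2^{-2/n}S^{-1}\int U^{2^*}$). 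Meanwhile the normalization \eqref{eq:2*norm=1} gives $\int_{B_R}U^{2^*}\leq 1$ for every $R$, hence $\int_{\mathbb{R}^n}U^{2^*}\leq 1$, a contradiction with $2>1$. No boundary-mass localization is required. As written, your proposal has a genuine gap: you have the tools on the table but the concluding arguments for both subcases fail to close.
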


\begin{proof}
We prove it by a contradiction argument. If $0<T<\infty,$ then after making an even reflection of $U,$ we obtain a positive solution of $-\Delta U=2^{-2 / n} S^{-1} U^{2^*-1}$ in $\mathbb{R}^{n}$ with two local maximum points. This is impossible due to the results of Gidas-Ni-Nirenberg \cite{GidasNiNirenbergSymmetry1979}. If $T=\infty,$ we have
\be\label{eq:T1}
\int_{\mathbb{R}^{n}} U^{2^*}\, \ud x=2.
\ee
Indeed, we know from the explicit expression
of $U$ given by \cite{CaffarelliGidasSpruckAsymptotic1989} that $U$ is an extremal function for the Sobolev inequality, i.e.,
$$
\frac{\int_{\mathbb{R}^{n}}|\nabla U|^{2}\, \ud x}{(\int_{\mathbb{R}^{n}} U^{2^*}\, \ud x)^{2 / 2^*}}=\frac{1}{S}.
$$
Multiplying \eqref{eq:U-equation} by $U$ and integrating by parts, we obtain that
$$
\int_{\mathbb{R}^{n}}|\nabla U|^{2}\, \ud x=\frac{1}{2^{2 / n} S} \int_{\mathbb{R}^{n}}U^{2^*}\, \ud x.
$$
Equality \eqref{eq:T1} follows immediately from the above two identities. However for all $R>0,$
$$
\int_{B_{R}(0)} v_{\alpha}^{2^*} \, \ud  v_{g_{\alpha}}=\int_{B_{\mu_{\alpha} R}(x_{\alpha})} u_{\alpha}^{2^*} \, \ud  v_{g} \leq 1.
$$
Sending $\alpha$ to $\infty,$ we have
$$
\int_{B_{R}(0)} U^{2^*}\, \ud x \leq 1,
$$
which violates \eqref{eq:T1}. We have thus established Lemma \ref{lem:lemT=0}.
\end{proof}

It follows from Lemma \ref{lem:lemT=0} that
$$
\lim _{\alpha \rightarrow \infty} \operatorname{dist}(x_{\alpha}, \partial M) / \mu_{\alpha}=0,\quad v_{\alpha} \rightarrow U \text { in } C_{l o c}^{1}(\overline{\mathbb{R}_{+}^{n}}),
$$
where
$$
U(x)=\Big(\frac{1}{1+c(n)|x|^{2}}\Big)^{\frac{n-2}{2}},
$$
and $c(n)=1 /(2^{2 / n}(n-2) n S)$. A direct calculation, or similar argument as above, yields
$$
\int_{\mathbb{R}_{+}^{n}} U^{2^*}\, \ud x=1.
$$
Denote
$$
\begin{aligned}
U_{x_{0}, \lambda}(x) :&=\lambda^{-\frac{n-2}{2}} U((x-x_{0}) / \lambda) \\
&=\Big(\frac{\lambda}{\lambda^2+c(n)|x-x_{0}|^{2}}\Big)^{\frac{n-2}{2}},
\end{aligned}
$$
where $x_{0} \in \mathbb{R}_+^{n}$ and $\lambda>0$. For brevity, we write $U_{0, \lambda}$ as $U_{\lambda}$. Hence, $U_{1}=U$.

\begin{proposition}\label{pro:energy convergence}
For every $\delta>0$,
$$
\lim _{\alpha \rightarrow\infty} \int_{B_{\delta}^+}\{|\nabla_{g}(u_{\alpha}-U_{\mu_{\alpha}})|^{2}+|u_{\alpha}-U_{\mu_{\alpha}}|^{2^*}\} \, \ud  v_{g}=0.
$$
\end{proposition}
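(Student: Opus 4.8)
The plan is to upgrade the $C^1_{\mathrm{loc}}$ convergence $v_\alpha \to U$ established above into a strong energy (i.e.\ $H^1$ plus $L^{2^*}$) convergence on fixed Euclidean balls, and then transfer this back to $u_\alpha$ on $M$ through the exponential chart. The key input is that no energy escapes to infinity in the rescaled picture, which must be extracted from a sharp energy bound. First I would record the natural energy identity for $u_\alpha$: testing \eqref{eq:E-L} against $u_\alpha$ and using \eqref{eq:2*norm=1} gives $\int_M |\nabla_g u_\alpha|^2\,\ud v_g + \tfrac{n-2}{2}\int_{\pa M} h_g u_\alpha^2\,\ud s_g + \alpha\|u_\alpha\|_{L^r(\pa M)}^2 = \ell_\alpha \to \tfrac{1}{2^{2/n}S}$ by \eqref{eq:limellalpha}. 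Combined with \eqref{eq:limrnorm0} (so the curvature term $\to 0$ since $|h_g|$ is bounded) and \eqref{eq:limitalphaualpha}, this yields $\int_M |\nabla_g u_\alpha|^2\,\ud v_g \to \tfrac{1}{2^{2/n}S}$. On the rescaled side, $\int_{O_\alpha} |\nabla_{g_\alpha} v_\alpha|^2\,\ud v_{g_\alpha} = \int_M |\nabla_g u_\alpha|^2\,\ud v_g$ and likewise $\int_{O_\alpha} v_\alpha^{2^*}\,\ud v_{g_\alpha} = \int_M u_\alpha^{2^*}\,\ud v_g = 1$. Meanwhile $\int_{\R^n_+} |\nabla U|^2\,\ud x = \tfrac{1}{2^{2/n}S}\int_{\R^n_+} U^{2^*}\,\ud x = \tfrac{1}{2^{2/n}S}$, matching the limit of the total energy of $v_\alpha$.

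Next I would run the standard "no loss of mass" argument. Fatou/weak lower semicontinuity under the locally uniform convergence $v_\alpha \to U$ gives, for any fixed $R>0$, $\int_{B_R^+} |\nabla U|^2\,\ud x \le \liminf_\alpha \int_{B_R^+} |\nabla_{g_\alpha} v_\alpha|^2\,\ud v_{g_\alpha}$ and the analogous statement with $U^{2^*}$; here one uses that $g_\alpha \to \delta$ in $C^1_{\mathrm{loc}}$, so the metric distortion factors are $1+o(1)$. Since both the full gradient energies and the full $L^{2^*}$ masses of $v_\alpha$ converge to exactly those of $U$ on all of $\R^n_+$, the mass on the complement $O_\alpha \setminus B_R^+$ must converge, as $\alpha\to\infty$ then $R\to\infty$, to zero: precisely, $\lim_{R\to\infty}\limsup_{\alpha\to\infty}\int_{O_\alpha\setminus B_R^+}\{|\nabla_{g_\alpha} v_\alpha|^2 + v_\alpha^{2^*}\}\,\ud v_{g_\alpha} = 0$, and consequently $\int_{B_R^+}\{|\nabla_{g_\alpha} v_\alpha|^2 + v_\alpha^{2^*}\}\,\ud v_{g_\alpha} \to \int_{B_R^+}\{|\nabla U|^2 + U^{2^*}\}\,\ud x$ for every fixed $R$. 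This, together with $v_\alpha \to U$ in $C^1(\overline{B_R^+})$ (which controls cross terms), upgrades to $\int_{B_R^+}\{|\nabla_{g_\alpha}(v_\alpha - U)|^2 + |v_\alpha - U|^{2^*}\}\,\ud v_{g_\alpha} \to 0$: indeed $C^1$ convergence already kills the integrand pointwise and dominates it near $0$, and the tail is controlled by the mass-concentration statement.

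Finally I would undo the rescaling. Fix $\delta>0$. For $z \in B_{\delta/\mu_\alpha}^+$ one has $\exp_{x_\alpha}(\mu_\alpha z) \in B_\delta^+$ (identifying a neighborhood of $x_\alpha$ with a half-ball via the chart, using Lemma~\ref{lem:lemT=0} so that $x_\alpha$ sits essentially on $\pa M$), and by the change of variables $x = \exp_{x_\alpha}(\mu_\alpha z)$,
\[
\int_{B_\delta^+}\{|\nabla_g(u_\alpha - U_{\mu_\alpha})|^2 + |u_\alpha - U_{\mu_\alpha}|^{2^*}\}\,\ud v_g
= \int_{B_{\delta/\mu_\alpha}^+}\{|\nabla_{g_\alpha}(v_\alpha - U)|^2 + |v_\alpha - U|^{2^*}\}\,\ud v_{g_\alpha},
\]
because $U_{\mu_\alpha}$ is exactly the $\mu_\alpha$-rescaling of $U$ and the Dirichlet energy and $L^{2^*}$ norm are conformally covariant in the relevant sense; any lower-order discrepancy from $g$ versus its frozen coefficients $g_\alpha$ is absorbed since $g_\alpha\to\delta$ locally. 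Given any $\eta>0$, choose $R$ large so that the $R$-tail of $v_\alpha - U$ has energy $<\eta$ for all large $\alpha$ (the mass-concentration step), and then choose $\alpha$ large so that $\delta/\mu_\alpha > R$ (true since $\mu_\alpha\to 0$) and the $B_R^+$-piece is $<\eta$; hence the right-hand side is $<2\eta$. This gives the claimed limit. The main obstacle is the mass-concentration step: making rigorous that the \emph{total} energy of $v_\alpha$ equals, in the limit, the total energy of $U$ with nothing lost at spatial infinity — this is where the sharp constant $1/(2^{2/n}S)$, the normalization $\int_M u_\alpha^{2^*} = 1$, the fact that $\int_{\R^n_+} U^{2^*} = 1$, and the decays \eqref{eq:limrnorm0}, \eqref{eq:limellalpha}, \eqref{eq:limitalphaualpha} all have to be combined to squeeze out equality in the lower-semicontinuity inequalities; the boundary curvature term and the $\varepsilon_\alpha$ boundary nonlinearity must also be checked to contribute nothing to the limiting energy balance, which follows from $|h_\alpha|\le C\mu_\alpha$ and \eqref{eq:varepsilon2}.
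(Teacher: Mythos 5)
Your proposal is correct and follows essentially the same route as the paper's proof: both arguments hinge on the fact that $\int_M|\nabla_g u_\alpha|^2\,\ud v_g$ and $\int_{\R^n_+}|\nabla U|^2\,\ud x$ share the exact limiting value $1/(2^{2/n}S)$, so that after rescaling the tail $\int_{B_{\delta/\mu_\alpha}^+\setminus B_R^+}|\nabla_{g_\alpha}v_\alpha|^2\,\ud v_{g_\alpha}$ can be squeezed to be small, while the $C^1(\overline{B_R^+})$ convergence $v_\alpha\to U$ handles the bounded region. The paper carries this out more compactly as a direct $\varepsilon$-splitting without invoking Fatou or weak lower semicontinuity (and one small note: the change of variables from $B_\delta^+$ with metric $g$ to $B_{\delta/\mu_\alpha}^+$ with the rescaled metric $g_\alpha$ is exact, so there is no ``lower-order discrepancy'' to absorb at that step).
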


\begin{proof}
We only prove that
$$
\lim _{\alpha \rightarrow \infty} \int_{B_{\delta}^+}|\nabla_{g}(u_{\alpha}-U_{\mu_{\alpha}})|^{2} \, \ud  v_{g}=0,
$$
and the other can be proved similarly.

For every given $\varepsilon>0,$ one can find $\alpha_{0}>0$ such that for all $\alpha \geq \alpha_{0}$,
$$
\int_{M}|\nabla_{g} u_{\alpha}|^{2} \, \ud  v_{g} \leq \frac{1}{2^{2 / n} S}+\varepsilon
$$
because $\lim \limits_{\alpha \rightarrow \infty} \int_{M}|\nabla_{g} u_{\alpha}|^{2} \, \ud  v_{g}=\frac{1}{2^{2 / n} S}.$ Since $\int_{\mathbb{R}_{+}^{n}}|\nabla U_1|^{2}\, \ud x=\frac{1}{2^{2 / n} S},$ we can choose $R>0$ such that
$$
\int_{\mathbb{R}_{+}^{n} \backslash B_{R}^{+}}|\nabla U_1|^{2}\, \ud x \leq \varepsilon.
$$
Note that
$$
\int_{B_{\delta / \mu_{\alpha}}^{+}}|\nabla_{g_{\alpha}} v_{\alpha}|^{2} \, \ud  v_{g_{\alpha}}=\int_{B_{\delta}^{+}}|\nabla_{g} u_{\alpha}|^{2} \, \ud  v_{g} \leq \frac{1}{2^{2 / n} S}+\varepsilon.
$$
Hence
$$
\begin{aligned}
&\int_{B_{\delta}^{+}}|\nabla_{g}(u_{\alpha}-U_{\mu_{\alpha}})|^{2} \, \ud  v_{g} \\
=&\int_{B_{\delta / \mu_{\alpha}}^{+}}|\nabla_{g_{\alpha}}(v_{\alpha}-U_1)|^{2} \, \ud  v_{g_{\alpha}} \\
=&\int_{B_{R}^{+}}|\nabla_{g_{\alpha}}(v_{\alpha}-U_1)|^{2} \, \ud  v_{g_{\alpha}}+\int_{B_{\delta / \mu_{\alpha}}^{+} \backslash \overline{B}_{R}^{+}}|\nabla_{g_{\alpha}}(v_{\alpha}-U_1)|^{2} \, \ud  v_{g_{\alpha}} \\
\leq& 2 \varepsilon+2 \int_{B_{\delta / \mu_{\alpha}}^{+} \backslash \overline{B}_{R}^{+}}|\nabla_{g_{\alpha}} v_{\alpha}|^{2} \, \ud  v_{g_{\alpha}}+2 \int_{B_{\delta / \mu_{\alpha}}^{+} \backslash \overline{B}_{R}^{+}}|\nabla_{g_{\alpha}} U_1|^{2} \, \ud  v_{g_{\alpha}} \\
\leq& 10 \varepsilon,
\end{aligned}
$$
where we used $\|v_{\alpha}-U_1\|_{C^{1}(\overline{B}_{R}^{+})} \leq \varepsilon$ for large $\alpha$ and
$$
\begin{aligned}
&\int_{B_{\delta / \mu_{\alpha}}^{+} \backslash \overline{B}_{R}^{+}}|\nabla_{g_{\alpha}} v_{\alpha}|^{2} \, \ud  v_{g_{\alpha}}+\int_{B_{\delta / \mu_{\alpha}}^{+}\backslash \overline{B}_{R}^{+}}|\nabla_{g_{\alpha}} U_1|^{2} \, \ud  v_{g_{\alpha}} \\
\leq& \frac{1}{2^{2 / n} S}+\varepsilon-\int_{B_{R}^{+}}|\nabla_{g_{\alpha}} v_{\alpha}|^{2} \, \ud  v_{g_{\alpha}}+\frac{1}{2^{2 / n} S}+\varepsilon-\int_{B_{R}^{+}}|\nabla_{g_{\alpha}} U_{1}|^{2} \, \ud  v_{g_{\alpha}} \\
\leq& 4 \varepsilon.
\end{aligned}
$$
\end{proof}

Let $\hat{g}=\varphi^{4 /(n-2)} g$ for some positive function $\varphi$ being chosen later, then by the conformal invariance,
\be\label{eq:conformal invariance}
\left\{\begin{array}{ll}
\Delta_{\hat{g}} (\psi/\varphi)-\frac{n-2}{4(n-1)} R_{\hat{g}} (\psi/\varphi)=\varphi^{-\frac{n+2}{n-2}} (\Delta_{g} \psi-\frac{n-2}{4(n-1)} R_{g} \psi) & \text { in } M, \\
\frac{\partial_{\hat{g}}(\psi/\varphi)}{\partial \nu}+\frac{n-2}{2} h_{\hat{g}} (\psi/\varphi)=\varphi^{-\frac{n}{n-2}}(\frac{\partial_{g} \psi}{\partial \nu}+\frac{n-2}{2} h_{g} \psi) & \text { on } \partial M.
\end{array}\right.
\ee
Let $\psi=u_{\alpha}$ in \eqref{eq:conformal invariance} and write $w_{\alpha}=u_{\alpha} / \varphi$. We have
\be\label{eq:conformal invariance1}
\left\{\begin{array}{ll}
\Delta_{g} u_{\alpha}-\frac{n-2}{4(n-1)} R_{g} u_{\alpha}=\varphi^{\frac{n+2}{n-2}}(\Delta_{\hat{g}} w_{\alpha}-\frac{n-2}{4(n-1)} R_{\hat{g}} w_{\alpha}) & \text { in } M, \\
\frac{\partial_{g} u_{\alpha}}{\partial \nu}+\frac{n-2}{2} h_{g} u_{\alpha}=\varphi^{\frac{n}{n-2}}(\frac{\partial_{\hat{g}} w_{\alpha}}{\partial \nu}+\frac{n-2}{2} h_{\hat{g}} w_{\alpha}) & \text { on } \partial M.
\end{array}\right.
\ee
Let $\psi=\varphi$ in \eqref{eq:conformal invariance}. We get
\be\label{eq:conformal invariance2}
\left\{\begin{array}{ll}
-\frac{n-2}{4(n-1)} R_{\hat{g}} \varphi^{\frac{n+2}{n-2}}=\Delta_{g} \varphi-\frac{n-2}{4(n-1)} R_{g} \varphi & \text { in } M, \\
\frac{n-2}{2} h_{\hat{g}} \varphi^{\frac{n}{n-2}}=\frac{\partial_{g} \varphi}{\partial \nu}+\frac{n-2}{2} h_{g} \varphi & \text { on } \partial M.
\end{array}\right.
\ee
Combining \eqref{eq:E-L}, \eqref{eq:conformal invariance1} and \eqref{eq:conformal invariance2}, we reach the following equation
\be\label{eq:walpha}
\left\{\begin{array}{ll}
-\Delta_{\hat{g}} w_{\alpha}=\ell_{\alpha} w_{\alpha}^{2^*-1}+\varphi^{-\frac{n+2}{n-2}}w_{\alpha} \Delta_{g} \varphi   & \text { in } M, \\
\frac{\partial_{\hat{g}} w_{\alpha}}{\partial \nu}=-\varphi^{-\frac{n}{n-2}}w_{\alpha}(\frac{\partial_{g} \varphi}{\partial \nu}+\frac{n-2}{2} h_{g} \varphi)-\alpha \|u_{\alpha}\|_{L^r(\partial M)}^{2-r}\varphi^{-\frac{n}{n-2}}u_{\alpha}^{r-1} & \text { on } \partial M.
\end{array}\right.
\ee

We will choose appropriate $\varphi=\varphi_{\alpha}$ to simplify \eqref{eq:walpha} and then apply the Moser iteration technique to show that $w_{\alpha}$ is bounded above by some constant independent of $\alpha$.

Let
$$
b_{\alpha}:=\left\{\begin{array}{ll}
\min \{\frac{n-2}{2} h_{g}+\alpha\Big(\frac{\|u_{\alpha}\|_{L^{r}(\partial M)}}{u_{\alpha}}\Big)^{2-r}, 1\} & \text { if } u_{\alpha} \neq 0, \\
1 & \text { if } u_{\alpha}=0.
\end{array}\right.
$$
Note that $b_{\alpha}$ is Lipschitz on $\partial M$ (with Lipschitz constant depending on $\alpha$) and it is uniformly bounded:
$$
-\frac{n-2}{2}\|h_{g}\|_{L^\infty(\partial M)} \leq b_{\alpha} \leq 1.
$$

\begin{lemma}
If $x_{\alpha}\in \partial M$, then there exists a unique weak solution $G_{\alpha}$ of
\be\label{eq:GreenFunction1}
\left\{\begin{array}{ll}
-\Delta_{g} G_{\alpha}=0 & \text { in } M, \\
\frac{\partial_{g} G_{\alpha}}{\partial \nu}+b_{\alpha} G_{\alpha}=\delta_{x_{\alpha}} & \text { on } \partial M,
\end{array}\right.
\ee
satisfying $G_{\alpha} \in W^{1, p}(M)$ for $1 \leq p<\frac{n}{n-1},$ where $\delta_{x_{\alpha}}$ is the delta measure centered at $x_{\alpha}$. Moreover, $G_{\alpha} \in C_{{loc}}^{1}(\overline{M} \backslash\{x_{\alpha}\})$, and
\be\label{eq:GreenFunction2}
C^{-1} \operatorname{dist}_{g}(x, x_{\alpha})^{2-n} \leq G_{\alpha}(x) \leq C\operatorname{dist}_{g}(x, x_{\alpha})^{2-n} \quad \text { for all } x \in \overline{M},
\ee
where $C>0$ depends only on $M, g$.
\end{lemma}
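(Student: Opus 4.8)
The plan is to peel off an explicit model singularity and solve a regular linear problem for the remainder, following the standard construction of Robin Green's functions (in the spirit of \cite{LiZhuSharp1997}), the only new feature being that every estimate must be uniform in $\alpha$. Since $x_\alpha\in\partial M$, fix Fermi (boundary normal) coordinates centered at $x_\alpha$; in them the outward conormal is $-\partial_{y_n}$, one has $g^{in}=\delta^{in}$, and the metric is a second–order perturbation of the flat half space. Let $\kappa_n$ be the dimensional constant for which $\kappa_n|y|^{2-n}$ is the Neumann Green's function of $\R^n_+$ with a source at the origin of $\partial\R^n_+$, set $\Gamma_\alpha(x):=\kappa_n\operatorname{dist}_g(x,x_\alpha)^{2-n}$, and fix $\eta\in C_c^\infty$ equal to $1$ near $x_\alpha$ and supported in a small coordinate ball. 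A computation in Fermi coordinates shows $\eta\Gamma_\alpha\in W^{1,p}(M)$ \emph{exactly} for $1\le p<\tfrac n{n-1}$, that $F_\alpha:=-\Delta_g(\eta\Gamma_\alpha)=O(\operatorname{dist}_g(\cdot,x_\alpha)^{2-n})\in L^q(M)$ for $q<\tfrac n{n-2}$, and that the boundary remainder $f_\alpha:=\delta_{x_\alpha}-\big(\tfrac{\partial_g(\eta\Gamma_\alpha)}{\partial\nu}+b_\alpha\,\eta\Gamma_\alpha\big)$ equals $O(\operatorname{dist}_g(\cdot,x_\alpha)^{2-n})$ on $\partial M$ and lies in the trace space $W^{1-1/q,q}(\partial M)$ for $q<\tfrac n{n-1}$; crucially all these bounds depend on $\alpha$ only through $\|b_\alpha\|_{L^\infty(\partial M)}\le\max\{\tfrac{n-2}{2}\|h_g\|_{L^\infty(\partial M)},1\}$, a quantity controlled by $M,g$. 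One then seeks $G_\alpha=\eta\Gamma_\alpha+v_\alpha$ with $v_\alpha$ solving $-\Delta_g v_\alpha=-F_\alpha$ in $M$, $\tfrac{\partial_g v_\alpha}{\partial\nu}+b_\alpha v_\alpha=f_\alpha$ on $\partial M$.

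This last is a Robin problem with no singular data. The Robin Laplacian is Fredholm of index zero — via \eqref{eq:Interpolation Inequality} the self–adjoint $H^1$–form $v\mapsto\int_M|\nabla_g v|^2\,\ud v_g+\int_{\partial M}b_\alpha v^2\,\ud s_g$ differs from a coercive one by a compact perturbation — so, assuming the homogeneous problem has only $v\equiv0$, there is a unique solution, and the $L^q$ Robin estimates give $v_\alpha\in W^{2,q}(M)\subset W^{1,p}(M)$ for $p<\tfrac n{n-1}$. Hence $G_\alpha=\eta\Gamma_\alpha+v_\alpha\in W^{1,p}(M)$ for all $1\le p<\tfrac n{n-1}$, and uniqueness in that class follows because the difference of two solutions is a homogeneous solution with no singular data, hence smooth and in the trivial kernel above. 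Since $G_\alpha$ is $g$–harmonic in the interior it is $C^\infty(M)$, and away from $x_\alpha$ the equation has smooth right–hand side and only a Lipschitz boundary coefficient, so boundary Schauder theory for the oblique–derivative problem gives $G_\alpha\in C^{1,\gamma}_{loc}(\overline M\setminus\{x_\alpha\})$ (here an $\alpha$–dependent constant is harmless).

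For the two–sided bound I would first pin down the behavior at $x_\alpha$: representing $v_\alpha$ near $x_\alpha$ by the Newtonian potential of $F_\alpha$ plus the single–layer potential of $f_\alpha$ plus a bounded $g$–harmonic function, the bounds $F_\alpha=O(\operatorname{dist}_g^{2-n})$, $f_\alpha=O(\operatorname{dist}_g^{2-n})$ force $v_\alpha(x)=o(\operatorname{dist}_g(x,x_\alpha)^{2-n})$ as $x\to x_\alpha$ (the two potentials being $O(\operatorname{dist}_g^{4-n})$ and $O(\operatorname{dist}_g^{3-n})$, with a logarithm in low dimensions), so $G_\alpha=\eta\Gamma_\alpha(1+o(1))\asymp\operatorname{dist}_g(\cdot,x_\alpha)^{2-n}$ in a fixed ball around $x_\alpha$; in particular $G_\alpha\to+\infty$ at $x_\alpha$. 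Positivity then follows at once: $\{G_\alpha<0\}$ is open, compactly contained in $\overline M\setminus\{x_\alpha\}$, $G_\alpha$ is $g$–harmonic on it and vanishes on its whole topological boundary, so it is empty, and the strong maximum principle together with the Hopf lemma (using $\tfrac{\partial_g G_\alpha}{\partial\nu}=-b_\alpha G_\alpha$ away from $x_\alpha$) give $G_\alpha>0$ on $\overline M\setminus\{x_\alpha\}$. Away from $x_\alpha$, De Giorgi–Nash–Moser estimates bound $G_\alpha$ above on $\overline M\setminus B_{\rho_0}(x_\alpha)$, which is the upper bound there since $\operatorname{dist}_g(\cdot,x_\alpha)^{2-n}\asymp1$, while positivity and the Harnack inequality chained over the connected set $\overline M\setminus B_{\rho_0}(x_\alpha)$ give a uniform positive lower bound. \textbf{The principal difficulty} is making $C$ uniform in $\alpha$: because $b_\alpha$ is Lipschitz only with an $\alpha$–dependent constant, the $L^q$ solvability, the boundary and Harnack estimates, and the potential estimates must all be carried out treating $b_\alpha$ as a zeroth–order coefficient controlled purely by $\|b_\alpha\|_{L^\infty(\partial M)}$, which is why the quantitative steps rely on the De Giorgi–Nash–Moser machinery rather than on Schauder theory.
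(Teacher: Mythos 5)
Your construction by peeling off the explicit singularity $\eta\Gamma_\alpha$ is sound as far as it goes, but it leaves the central difficulty of the lemma untouched, and that difficulty cannot be dispatched by the route you sketch. You write ``\emph{assuming} the homogeneous problem has only $v\equiv 0$, there is a unique solution,'' and later propose to run all estimates ``treating $b_\alpha$ as a zeroth-order coefficient controlled purely by $\|b_\alpha\|_{L^\infty(\partial M)}$.'' Neither of these works, because $b_\alpha$ is allowed to be \emph{negative} (its only a priori lower bound is $-\frac{n-2}{2}\|h_g\|_{L^\infty(\partial M)}$). A bounded, possibly negative Robin coefficient does not yield coercivity of $u\mapsto\int_M|\nabla_g u|^2\,\ud v_g+\int_{\partial M}b_\alpha u^2\,\ud s_g$: test with $u\equiv 1$ and the form equals $\int_{\partial M}b_\alpha$, which an $L^\infty$ bound alone does not keep nonnegative. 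Without coercivity you have neither trivial kernel for the Fredholm alternative, nor positivity, nor estimates with $C=C(M,g)$. Your maximum-principle positivity argument has the same problem: on $\partial M\cap\{G_\alpha<0\}$ the Robin condition $\partial_\nu G_\alpha=-b_\alpha G_\alpha$ is \emph{compatible} with a strict Hopf inequality when $b_\alpha<0$, so the set $\{G_\alpha<0\}$ need not be compactly contained in the interior and the conclusion ``it is empty'' does not follow.

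The paper fills exactly this gap with a structural fact about $b_\alpha$ that your argument never invokes: since $b_\alpha<\tfrac12$ forces $\alpha(\|u_\alpha\|_{L^r(\partial M)}/u_\alpha)^{2-r}$ to be small, one gets $\operatorname{vol}_g\{b_\alpha<\tfrac12\}\to 0$ as $\alpha\to\infty$ (equation \eqref{eq:GreenFunction3}). Combining this with H\"older, the sharp trace inequality \eqref{eq:lemthm0.1}, and writing $b_\alpha=\tfrac12-(\tfrac12-b_\alpha)^-+\dots$, the paper shows the quadratic form is bounded below by $\tfrac14\int_M|\nabla_g u|^2+\tfrac14\int_{\partial M}u^2$, uniformly for large $\alpha$. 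That uniform coercivity is precisely what makes the Fredholm/regularity machinery applicable with a trivial kernel, gives positivity of $G_\alpha$ (test with $(G_\alpha)^-$), and delivers constants in \eqref{eq:GreenFunction2} depending only on $M,g$. Until you establish (or import) this coercivity with $\alpha$-independent constants, the proposal does not prove the lemma.
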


\begin{proof}
We claim that
\be\label{eq:GreenFunction3}
\lim _{\alpha \rightarrow \infty} \text {vol}_{g}\{b_{\alpha}<\frac{1}{2}\}=0.
\ee
Indeed, for every measurable set $E \subset \subset \partial M \cap\{u_{\alpha}>0\},$ we have
$$
\operatorname{vol}_{g}(E)=\int_{E} \, \ud  s_{g}=\int_{E} u_{\alpha}^{r / 2} u_{\alpha}^{-r / 2} \, \ud  s_{g} \leq\|u_{\alpha}\|_{L^{r}(E)}^{r / 2}\|u_{\alpha}^{-1}\|_{L^{r}(E)}^{r / 2}.
$$
It follows that
$$
\begin{aligned}
\|(\|u_{\alpha}\|_{L^{r}(\partial M)} u_{\alpha}^{-1})^{2-r}\|_{L^{r /(2-r)}(E)} &=\|u_{\alpha}\|_{L^{r}(\partial M)}^{2-r}\|u_{\alpha}^{-1}\|_{L^{r}(E)}^{2-r} \\
& \geq(\operatorname{vol}_{g}(E))^{2(2-r) / r}.
\end{aligned}
$$
Note that
$$
\alpha(\|u_{\alpha}\|_{L^{r}(\partial M)} u_{\alpha}^{-1})^{2-r}<\frac{1}{2}(1+(n-2)|h_{g}|)
$$
if $b_{\alpha}<1 / 2$. Since $\{b_{\alpha}<{1}/{2}\} \subset \subset \partial M \cap\{u_{\alpha}>0\}$,
$$
(\operatorname{vol}_{g}\{b_{\alpha}<\frac{1}{2}\})^{2(2-r) / r} \leq \frac{C}{\alpha}.
$$
The claim is verified.

Notice that $b_{\alpha}$ is uniformly bounded and Lipschitz. Using H\"older's inequality, \eqref{eq:lemthm0.1} and \eqref{eq:GreenFunction3}, we have
$$
\begin{aligned}
&\int_{M}|\nabla_{g} u|^{2}\, \ud  v_{g}+\int_{\partial M} b_{\alpha} u^{2} \, \ud  s_{g} \\
\geq& \int_{M}|\nabla_{g} u|^{2} \, \ud  v_{g}+\frac{1}{2} \int_{\partial M} u^{2} \, \ud  s_{g}-\int_{\partial M}(b_{\alpha}-\frac{1}{2})^{-} u^{2} \, \ud  s_{g} \\
\geq& \int_{M}|\nabla_{g} u|^{2} \, \ud  v_{g}+\frac{1}{2} \int_{\partial M} u^{2} \, \ud  s_{g}-\|(b_{\alpha}-\frac{1}{2})^{-}\|_{L^{n-1}(\partial M)}\|u\|_{L^{2(n-1)/(n-2)}(\partial M)}^{2} \\
\geq& \int_{M}|\nabla_{g} u|^{2}\, \ud  v_{g}+\frac{1}{2} \int_{\partial M} u^{2} \, \ud  s_{g}-C\|(b_{\alpha}-\frac{1}{2})^{-}\|_{L^{n-1}(\partial M)}\Big(\int_{M}|\nabla_{g} u|^{2} \, \ud  v_{g}+\int_{\partial M} u^{2}\, \ud  s_{g}\Big) \\
\geq& \frac{1}{4} \int_{M}|\nabla_{g} u|^{2}\, \ud  v_{g}+\frac{1}{4} \int_{\partial M} u^{2} \, \ud  s_{g}.
\end{aligned}
$$
Thus, it is coercive. Then it follows from standard elliptic theory (see, e.g., \cite{BrezisStraussSemi1973}\cite{KenigPipherThe1993}) that there exists a unique solution of \eqref{eq:GreenFunction1} in $W^{1, p}(M)$ for $1 \leq p<\frac{n}{n-1}$ satisfying \eqref{eq:GreenFunction2}.
\end{proof}

Similarly, we have the following lemma:
\begin{lemma}
If $x_{\alpha}\in M$, then there exists a unique weak solution $G_{\alpha}$ of
\be\label{eq:GreenFunction1'}
\left\{\begin{array}{ll}
-\Delta_{g} G_{\alpha}=\delta_{x_{\alpha}} & \text { in } M, \\
\frac{\partial_{g} G_{\alpha}}{\partial \nu}+b_{\alpha} G_{\alpha}=0 & \text { on } \partial M,
\end{array}\right.
\ee
satisfying $G_{\alpha} \in W^{1, p}(M)$ for $1 \leq p<\frac{n}{n-1},$ where $\delta_{x_{\alpha}}$ is the delta measure centered at $x_{\alpha}$. Moreover, $G_{\alpha} \in C_{{loc}}^{1}(\overline{M} \backslash\{x_{\alpha}\})$ and
\be\label{eq:GreenFunction2'}
C^{-1} \operatorname{dist}_{g}(x, x_{\alpha})^{2-n} \leq G_{\alpha}(x) \leq C\operatorname{dist}_{g}(x, x_{\alpha})^{2-n} \quad \text { for all } x \in \overline{M},
\ee
where $C>0$ depends only on $M, g$.
\end{lemma}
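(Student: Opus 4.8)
The plan is to mimic the proof of the preceding lemma almost verbatim; the only structural change is that the Dirac mass now sits in the interior of $M$ while the boundary datum becomes homogeneous, so the weak formulation reads $a_\alpha(G_\alpha,v)=v(x_\alpha)$ with $a_\alpha(u,v):=\int_M\nabla_g u\cdot\nabla_g v\,\ud v_g+\int_{\partial M}b_\alpha uv\,\ud s_g$. First I would record that $a_\alpha$ is coercive on $H^1(M)$ for all large $\alpha$, with coercivity constant independent of $\alpha$ and of $x_\alpha$: the same chain of estimates as above --- H\"older, the trace Sobolev inequality \eqref{eq:lemthm0.1}, and the uniform smallness \eqref{eq:GreenFunction3} of $\{b_\alpha<1/2\}$ --- gives $a_\alpha(u,u)\ge\tfrac14\|\nabla_g u\|_{L^2(M)}^2+\tfrac14\|u\|_{L^2(\partial M)}^2$, and the right-hand side controls $\|u\|_{H^1(M)}^2$.

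Next, since $\delta_{x_\alpha}\notin H^{-1}(M)$ when $n\ge3$, I would construct $G_\alpha$ by the Stampacchia duality argument of Brezis--Strauss \cite{BrezisStraussSemi1973} (see also \cite{KenigPipherThe1993}): approximate $\delta_{x_\alpha}$ by $f_j\in L^\infty(M)$ with $\int_M f_j\,\ud v_g=1$ and $f_j\rightharpoonup\delta_{x_\alpha}$, solve $a_\alpha(G_j,v)=\int_M f_jv\,\ud v_g$ by Lax--Milgram, and test the adjoint problem (which carries the same homogeneous Robin condition) against $L^{p'}$ data. Since $1\le p<\tfrac n{n-1}$ forces $p'>n$ and hence $W^{1,p'}(M)\hookrightarrow C(\overline M)$, this duality yields $\|G_j\|_{W^{1,p}(M)}\le C$ uniformly in $j$; a weak limit then gives a solution $G_\alpha\in W^{1,p}(M)$ of \eqref{eq:GreenFunction1'}, and uniqueness is immediate from coercivity. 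For the regularity away from the singularity I would observe that on $M\setminus\{x_\alpha\}$ the equation is $-\Delta_g G_\alpha=0$ with the Lipschitz Robin condition $\partial_g G_\alpha/\partial\nu+b_\alpha G_\alpha=0$, so interior and boundary Schauder (or $L^p$) estimates give $G_\alpha\in C^1_{loc}(\overline M\setminus\{x_\alpha\})$; nonnegativity follows by testing $a_\alpha(G_\alpha,G_\alpha^-)\ge0$ against coercivity, and the strong maximum principle upgrades this to $G_\alpha>0$.

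The last and most delicate step is the two-sided bound \eqref{eq:GreenFunction2'}. Near $x_\alpha$ the normalization $\int_M f_j\,\ud v_g=1$ pins down the singular part, $G_\alpha\sim c_n\operatorname{dist}_g(x,x_\alpha)^{2-n}$, which I would make rigorous by comparison with the Riemannian fundamental solution of $-\Delta_g$ on a small geodesic ball, using that $G_\alpha$ is pinched between positive constants on the bounding sphere (from the uniform $W^{1,p}$ bound together with the interior and boundary Harnack inequalities for $-\Delta_g$ with a uniformly bounded Robin coefficient). Away from $x_\alpha$, where $\operatorname{dist}_g(x,x_\alpha)$ is bounded below, Harnack chains spread both bounds over all of $\overline M$. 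The main obstacle is the \emph{uniformity} of $C$ in $\alpha$ and in the location of $x_\alpha$ --- in particular when $\operatorname{dist}_g(x_\alpha,\partial M)\to0$, so that the source approaches the Robin boundary. To handle this I would rescale around $x_\alpha$ at scale $\operatorname{dist}_g(x_\alpha,\partial M)$ (or at scale $\operatorname{dist}_g(x,x_\alpha)$), reducing to the explicit Green's function of $-\Delta$ on a flat half-space with a bounded Robin condition; the uniform bounds $-\tfrac{n-2}2\|h_g\|_{L^\infty(\partial M)}\le b_\alpha\le1$ and \eqref{eq:GreenFunction3} ensure that every constant produced along this blow-up is controlled independently of $\alpha$ and of $x_\alpha$.
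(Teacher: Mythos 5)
Your proposal is correct and follows the same route as the paper: the paper offers no separate proof for this lemma (it simply says ``similarly'' and relies on the same coercivity estimate for $a_\alpha$ established via \eqref{eq:GreenFunction3}, H\"older, and \eqref{eq:lemthm0.1}, together with the cited standard references \cite{BrezisStraussSemi1973,KenigPipherThe1993} for the $W^{1,p}$ theory and Green's function bounds). Your fill-in of the Brezis--Strauss duality, the qualitative $C^1_{\mathrm{loc}}$ regularity, and the rescaling argument for uniformity as $x_\alpha$ approaches $\partial M$ is exactly what those citations are meant to supply.
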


Set
\be\label{eq:varphialpha}
\varphi_{\alpha}=\mu_{\alpha}^{(n-2) / 2} G_{\alpha}.
\ee
Now we have the following crucial estimate.

\begin{proposition}\label{pro:proMoser}
There exists some constant $C$ independent of $\alpha$ such that for all $\alpha \geq 1$,
\be\label{eq:Moser}
u_{\alpha} / \varphi_{\alpha} \leq C\quad \text { for }\quad x \in \overline{M}.
\ee
\end{proposition}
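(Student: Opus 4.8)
The plan is to use the specific choice $\varphi=\varphi_{\alpha}=\mu_{\alpha}^{(n-2)/2}G_{\alpha}$ in the conformally transformed equation \eqref{eq:walpha} to kill the interior zeroth order term and give the boundary condition a favorable sign, and then run a Moser iteration for $w_{\alpha}:=u_{\alpha}/\varphi_{\alpha}$ in the metric $\hat g=\varphi_{\alpha}^{4/(n-2)}g$. First I would insert the defining properties of $G_{\alpha}$ (equation \eqref{eq:GreenFunction1} if $x_{\alpha}\in\partial M$, \eqref{eq:GreenFunction1'} if $x_{\alpha}\in M$) into \eqref{eq:conformal invariance2} and hence into \eqref{eq:walpha}. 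Since $G_{\alpha}$ is $g$-harmonic in $M\setminus\{x_{\alpha}\}$, the interior equation collapses to $-\Delta_{\hat g}w_{\alpha}=\ell_{\alpha}w_{\alpha}^{2^*-1}$ in $M\setminus\{x_{\alpha}\}$; when $x_{\alpha}\in M$ there is in addition the distributional term $-\mu_{\alpha}^{(n-2)/2}\varphi_{\alpha}^{-(n+2)/(n-2)}w_{\alpha}\,\delta_{x_{\alpha}}$, which has the right (nonpositive) sign and, since the blow-up analysis below shows $w_{\alpha}$ is bounded near $x_{\alpha}$, only means $x_{\alpha}$ is a removable singularity. On $\partial M\setminus\{x_{\alpha}\}$ one has $\partial_{\nu}^{g}\varphi_{\alpha}=-b_{\alpha}\varphi_{\alpha}$, so the boundary line of \eqref{eq:walpha} becomes
\[
\frac{\partial_{\hat g}w_{\alpha}}{\partial\nu}
=\varphi_{\alpha}^{-2/(n-2)}\,w_{\alpha}\Big(b_{\alpha}-\tfrac{n-2}{2}h_{g}\Big)
-\alpha\|u_{\alpha}\|_{L^{r}(\partial M)}^{2-r}\,w_{\alpha}^{r-1}\,\varphi_{\alpha}^{\,r-1-n/(n-2)} .
\]
The point of the precise form of $b_{\alpha}$ is that the exponents $r-2-\tfrac{2}{n-2}$ and $r-1-\tfrac{n}{n-2}$ coincide: on $\{b_{\alpha}=\tfrac{n-2}{2}h_{g}+\alpha(\|u_{\alpha}\|_{L^{r}(\partial M)}/u_{\alpha})^{2-r}\}$ the two terms cancel identically, while on $\{b_{\alpha}=1\}$ the first term is pointwise dominated by the second (treat $1-\tfrac{n-2}{2}h_{g}\le 0$ and $0<1-\tfrac{n-2}{2}h_{g}\le\alpha(\|u_{\alpha}\|_{L^{r}(\partial M)}/u_{\alpha})^{2-r}$ separately). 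Hence $\partial_{\hat g}w_{\alpha}/\partial\nu\le 0$ on $\partial M\setminus\{x_{\alpha}\}$ (on all of $\partial M$ if $x_{\alpha}\in M$). I would also record $\ud v_{\hat g}=\varphi_{\alpha}^{2^*}\,\ud v_{g}$, so $\int_{M}w_{\alpha}^{2^*}\,\ud v_{\hat g}=\int_{M}u_{\alpha}^{2^*}\,\ud v_{g}=1$ by \eqref{eq:2*norm=1}.

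The next step is a local Moser estimate with $\alpha$-independent constants: if $x\in\overline{M}$ and $\rho>0$ are such that (i) $\operatorname{dist}_{g}(x,x_{\alpha})$ is bounded below by a fixed multiple of $\rho$, and (ii) $\int_{B_{2\rho}(x)}u_{\alpha}^{2^*}\,\ud v_{g}<\varepsilon_{0}$, then $\sup_{B_{\rho}(x)}w_{\alpha}\le C$, with $\varepsilon_{0},C$ depending only on $M,g$. Indeed, by \eqref{eq:GreenFunction2}/\eqref{eq:GreenFunction2'} and elliptic estimates $G_{\alpha}$ has $\alpha$-uniform two-sided and $C^{1}$ bounds away from $x_{\alpha}$, so on $B_{2\rho}(x)$ the conformal factor $\varphi_{\alpha}^{4/(n-2)}$ has $\alpha$-uniformly bounded oscillation and, after rescaling to unit size, $\hat g$ is $\alpha$-uniformly equivalent to the Euclidean metric. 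Testing $-\Delta_{\hat g}w_{\alpha}=\ell_{\alpha}w_{\alpha}^{2^*-2}w_{\alpha}$ with $\eta^{2}w_{\alpha}^{1+2\beta}$ ($\beta\ge0$, $\eta$ a cutoff) — the boundary integral being dropped by $\partial_{\hat g}w_{\alpha}/\partial\nu\le0$, $w_{\alpha}\ge0$ — and bounding the critical term by $\|w_{\alpha}\|_{L^{2^*}(B_{2\rho},\hat g)}^{2^*-2}\,\|\eta w_{\alpha}^{1+\beta}\|_{L^{2^*}(\hat g)}^{2}$ via H\"older and the rescaled Euclidean Sobolev inequality, the smallness $\|w_{\alpha}\|_{L^{2^*}(B_{2\rho},\hat g)}^{2^*}=\int_{B_{2\rho}(x)}u_{\alpha}^{2^*}\,\ud v_{g}<\varepsilon_{0}$ lets me absorb it; a Br\'ezis--Kato iteration then yields a uniform $L^{q_{0}}$ bound for some $q_{0}>2^*$, after which $w_{\alpha}^{2^*-2}\in L^{n/2+\epsilon}$ locally with a uniform bound and standard elliptic theory gives the claimed bound.

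It remains to verify hypothesis (ii) outside the bubble core $B_{R\mu_{\alpha}}(x_{\alpha})$ ($R$ a large fixed constant) and to control $w_{\alpha}$ on the core. Since $u_{\alpha}^{2^*}\,\ud v_{g}\rightharpoonup\delta_{x_{0}}$ with $x_{0}=\lim x_{\alpha}$ (from Lemma \ref{lem:lemT=0} and Proposition \ref{pro:energy convergence}), choosing $\rho$ small and $\alpha$ large makes (ii) hold on a fixed finite cover of $\overline{M}\setminus B_{3\rho}(x_{0})$. Inside $B_{3\rho}(x_{0})$ I cover $B_{3\rho}(x_{\alpha})\setminus B_{R\mu_{\alpha}}(x_{\alpha})$ by finitely many dyadic annuli $A_{j}=\{2^{j}R\mu_{\alpha}\le\operatorname{dist}_{g}(\cdot,x_{\alpha})\le 2^{j+1}R\mu_{\alpha}\}$; on each, $\hat g$ rescaled to unit size is again $\alpha$- and $j$-uniformly Euclidean equivalent, and the mass is $\int_{A_{j}}u_{\alpha}^{2^*}\,\ud v_{g}\le\int_{\{|z|\ge R\}}v_{\alpha}^{2^*}\,\ud v_{g_{\alpha}}=\int_{\{|z|\ge R\}}U^{2^*}\,\ud z+o(1)<\varepsilon_{0}$ for $R$ large — the small bubble tail — so the local estimate applies with a common constant. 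Finally, on $B_{R\mu_{\alpha}}(x_{\alpha})$ the blow-up $v_{\alpha}\to U$ together with the fact that $G_{\alpha}$ behaves near $x_{\alpha}$ like a fixed positive constant times $\operatorname{dist}_{g}(\cdot,x_{\alpha})^{2-n}$ shows $w_{\alpha}\circ\exp_{x_{\alpha}}(\mu_{\alpha}\,\cdot)$ converges uniformly on $\{|z|\le R\}$ to a bounded limiting profile, so $\sup_{B_{R\mu_{\alpha}}(x_{\alpha})}w_{\alpha}\le C$. Assembling the finite cover proves \eqref{eq:Moser} for all large $\alpha$; for $\alpha$ in a compact range it follows from the continuity of $u_{\alpha}/\varphi_{\alpha}$ on $\overline{M}$ (with $w_{\alpha}(x_{\alpha})=0$).

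I expect the main obstacle to be the interplay at $x_{\alpha}$: one must show precisely that the weight $\varphi_{\alpha}$ exactly absorbs the concentration of $u_{\alpha}$ — so that $w_{\alpha}$ has a genuine bounded profile near $x_{\alpha}$ rather than a residual blow-up — while simultaneously keeping every constant in the Moser iteration independent of $\alpha$ although $\hat g$ degenerates as $\alpha\to\infty$. The uniform Green's function bounds \eqref{eq:GreenFunction2}/\eqref{eq:GreenFunction2'} and the $C^{1}_{\mathrm{loc}}$ blow-up convergence established above are exactly the inputs that make both work.
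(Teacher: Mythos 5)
Your proposal is correct and follows essentially the same route as the paper: reduce to a differential inequality with $-\Delta_{\hat g}w_\alpha=\ell_\alpha w_\alpha^{2^*-1}$ and $\partial_{\hat g}w_\alpha/\partial\nu\le0$ away from $x_\alpha$, apply Moser iteration as in Li--Zhu \cite{LiZhuSharp1997} to bound $w_\alpha$ off the bubble core, and then use the blow-up convergence $v_\alpha\to U_1$ to bound it on the core. You have merely filled in the details the paper leaves to the reference, in particular the sign verification for the boundary term, which hinges exactly on the identity $r-2-\tfrac{2}{n-2}=r-1-\tfrac{n}{n-2}$ that you correctly observe.
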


\begin{proof}
Set $w_{\alpha}=u_{\alpha} / \varphi_{\alpha}$ and $\hat{g}=\varphi_{\alpha}^{4/(n-2)} g .$ By our choice of $G_{\alpha}$ and \eqref{eq:walpha} we have, for large $\alpha$,
\be\label{eq:walpha<}
\left\{\begin{array}{ll}
-\Delta_{\hat{g}} w_{\alpha}=\ell_{\alpha} w_{\alpha}^{2^*-1} & \text { in } M, \\
\frac{\partial_{\hat{g}} w_{\alpha}}{\partial \nu} \leq 0 & \text { on } \partial M \backslash\{x_{\alpha}\},
\end{array}\right.
\ee
or
\be\label{eq:walpha<'}
\left\{\begin{array}{ll}
-\Delta_{\hat{g}} w_{\alpha}=\ell_{\alpha} w_{\alpha}^{2^*-1} & \text { in } M\backslash\{x_{\alpha}\}, \\
\frac{\partial_{\hat{g}} w_{\alpha}}{\partial \nu} \leq 0 & \text { on } \partial M .
\end{array}\right.
\ee
Then the Moser iterations procedure on \cite{LiZhuSharp1997} implies that
$$
\|w_{\alpha}\|_{L^{\infty}(M \backslash B_{\mu_{\alpha}}(x_{\alpha}))} \leq C.
$$
Recall that $v_{\alpha} \rightarrow U_1$ in $C_{l o c}^{2},$ from which we also have $\|w_{\alpha}\|_{L^{\infty}(B_{\mu_{\alpha}}(x_{\alpha}))} \leq C.$ This finishes the proof.
\end{proof}

\begin{corollary}\label{cor:Pointwise Estimation}
For all large $\alpha$,
$$
u_{\alpha}(x) \leq C \mu_{\alpha}^{(n-2)/2} \operatorname{dist}_{g}(x, x_{\alpha})^{2-n} \quad \text { for all } x \in \overline{M},
$$
where $C>0$ depends only on $M, g$.
\end{corollary}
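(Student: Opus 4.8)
The plan is to read off the corollary directly from Proposition~\ref{pro:proMoser} together with the pointwise bounds on the Green's functions $G_\alpha$. First I would recall from \eqref{eq:varphialpha} that $\varphi_\alpha=\mu_\alpha^{(n-2)/2}G_\alpha$, so that Proposition~\ref{pro:proMoser}, which asserts $w_\alpha=u_\alpha/\varphi_\alpha\le C$ on $\overline{M}$ for all $\alpha\ge 1$ with $C$ independent of $\alpha$, becomes
$$
u_\alpha=w_\alpha\varphi_\alpha\le C\,\mu_\alpha^{(n-2)/2}\,G_\alpha(x)\qquad\text{for all }x\in\overline{M}.
$$

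Next I would insert the upper Green's function estimate. For each fixed large $\alpha$ the maximum point $x_\alpha$ lies either on $\partial M$ or in the interior of $M$. In the first case the right inequality in \eqref{eq:GreenFunction2} gives $G_\alpha(x)\le C\operatorname{dist}_g(x,x_\alpha)^{2-n}$ for all $x\in\overline{M}$; in the second case the same bound follows from \eqref{eq:GreenFunction2'}. In both cases the constant $C$ depends only on $(M,g)$ and not on $\alpha$ — this uniformity being precisely what the two Green's function lemmas provide, thanks to the $\alpha$-independent coercivity coming from \eqref{eq:GreenFunction3} and the uniform bound $-\tfrac{n-2}{2}\|h_g\|_{L^\infty(\partial M)}\le b_\alpha\le 1$. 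Combining this with the previous display yields
$$
u_\alpha(x)\le C\,\mu_\alpha^{(n-2)/2}\,\operatorname{dist}_g(x,x_\alpha)^{2-n}\qquad\text{for all }x\in\overline{M},
$$
with $C=C(M,g)$, which is the assertion.

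There is no substantial obstacle here: the corollary is a one-line consequence of the preceding proposition and lemmas. The only point worth a sentence of comment is the bookkeeping of constants, namely that the constant of Proposition~\ref{pro:proMoser} (for $\alpha\ge 1$) and the constant in the Green's function estimates are both uniform in $\alpha$, so that their product is uniform as well; and that the two possible positions of $x_\alpha$ are both covered by the respective Green's function lemmas.
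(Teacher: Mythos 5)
Your proof is correct and follows exactly the same route as the paper, which says it "follows immediately from Proposition~\ref{pro:proMoser} and \eqref{eq:GreenFunction2}"; you have simply spelled out the combination of $\varphi_\alpha=\mu_\alpha^{(n-2)/2}G_\alpha$, the Moser bound on $u_\alpha/\varphi_\alpha$, and the Green's function upper bound (correctly noting that \eqref{eq:GreenFunction2'} covers the interior case). The remarks on $\alpha$-uniformity of the constants are accurate and merely make explicit what the paper leaves implicit.
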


\begin{proof}
It follows immediately from Proposition \ref{pro:proMoser} and \eqref{eq:GreenFunction2}.
\end{proof}

\begin{corollary}\label{cor:cor2.9}
For any small $\delta>0,$ there exists a constant $C>0$ depending only on $M, g, \delta$ such that
$$
\int_{M \backslash B_{\delta / 2}(x_{\alpha})}|\nabla_{g} u_{\alpha}|^{2}\, \ud v_g \leq C \mu_{\alpha}^{n-2}.
$$
Consequently, we can select $\delta_{\alpha} \in[\delta / 2, \delta]$ such that
$$
\int_{\partial B_{\delta_{\alpha}}(x_{\alpha})}|\nabla_{g} u_{\alpha}|^{2}\, \ud v_g \leq C \mu_{\alpha}^{n-2}.
$$
\end{corollary}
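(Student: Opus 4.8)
The plan is to derive a Caccioppoli-type inequality on $M\setminus B_{\delta/4}(x_\alpha)$ and then feed in the pointwise decay bound of Corollary~\ref{cor:Pointwise Estimation}. Fix a cutoff $\eta\in C^{\infty}(\overline M)$ with $0\le\eta\le1$, $\eta\equiv0$ on $B_{\delta/4}(x_\alpha)$, $\eta\equiv1$ on $\overline M\setminus B_{\delta/2}(x_\alpha)$, and $|\nabla_g\eta|\le C\delta^{-1}$ (available once $\delta$ is below the injectivity radius, with $C$ independent of $x_\alpha$). Testing the interior equation in \eqref{eq:E-L} against $u_\alpha\eta^2$ and integrating by parts, using the Neumann condition on $\partial M$ and discarding the nonpositive term $-\alpha\|u_\alpha\|_{L^{r}(\partial M)}^{2-r}\int_{\partial M}u_\alpha^{r}\eta^2\,\ud s_g$, gives
$$
\int_M|\nabla_g u_\alpha|^2\eta^2\,\ud v_g\le \ell_\alpha\int_M u_\alpha^{2^*}\eta^2\,\ud v_g-2\int_M u_\alpha\eta\,\nabla_g u_\alpha\cdot\nabla_g\eta\,\ud v_g+\tfrac{n-2}{2}\|h_g\|_{L^{\infty}(\partial M)}\int_{\partial M}u_\alpha^2\eta^2\,\ud s_g .
$$
Absorbing the cross term by Young's inequality, $2u_\alpha\eta|\nabla_g u_\alpha||\nabla_g\eta|\le\tfrac12|\nabla_g u_\alpha|^2\eta^2+2u_\alpha^2|\nabla_g\eta|^2$, and using the boundedness of $\ell_\alpha$ from \eqref{eq:ellalpha<}, one is left with
$$
\int_M|\nabla_g u_\alpha|^2\eta^2\,\ud v_g\le C\Big(\int_M u_\alpha^{2^*}\eta^2\,\ud v_g+\int_M u_\alpha^2|\nabla_g\eta|^2\,\ud v_g+\int_{\partial M}u_\alpha^2\eta^2\,\ud s_g\Big),
$$
with $C$ depending only on $n$, $M$, $g$.

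Next I would estimate the three terms on the right using Corollary~\ref{cor:Pointwise Estimation}, which yields $u_\alpha(x)^2\le C\mu_\alpha^{n-2}\operatorname{dist}_g(x,x_\alpha)^{2(2-n)}$ on $\overline M$ and hence $u_\alpha(x)^{2^*}\le C\mu_\alpha^{n}\operatorname{dist}_g(x,x_\alpha)^{-2n}$. On the supports of $\eta$ and of $\nabla_g\eta$ one has $\operatorname{dist}_g(x,x_\alpha)\ge\delta/4$, so each integrand is pointwise bounded by $C(\delta)\mu_\alpha^{n-2}$ (for $\alpha$ large, so that $\mu_\alpha\le1$ and $\mu_\alpha^{n}\le\mu_\alpha^{n-2}$; for the middle term one also invokes $|\nabla_g\eta|\le C\delta^{-1}$), and since $M$ and $\partial M$ have finite measure, all three integrals are bounded by $C(\delta)\mu_\alpha^{n-2}$. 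Because $\eta\equiv1$ on $\overline M\setminus B_{\delta/2}(x_\alpha)$, this proves the first inequality.

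For the consequence, apply the co-area formula to $\int_{\{\delta/2<\operatorname{dist}_g(\cdot,x_\alpha)<\delta\}}|\nabla_g u_\alpha|^2\,\ud v_g$, which by the first part is $\le C(\delta)\mu_\alpha^{n-2}$. Since $|\nabla_g\operatorname{dist}_g(\cdot,x_\alpha)|=1$ a.e., this gives $\int_{\delta/2}^{\delta}\big(\int_{\partial B_t(x_\alpha)}|\nabla_g u_\alpha|^2\,\ud v_g\big)\,\ud t\le C(\delta)\mu_\alpha^{n-2}$, so by the mean-value theorem for integrals there is $\delta_\alpha\in[\delta/2,\delta]$ with $\int_{\partial B_{\delta_\alpha}(x_\alpha)}|\nabla_g u_\alpha|^2\,\ud v_g\le 2\delta^{-1}C(\delta)\mu_\alpha^{n-2}$, as claimed.

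I do not expect a genuine obstacle here; the argument is a localized energy estimate combined with the already-established pointwise bound. The only points requiring care are the uniform (in $x_\alpha$) choice of the cutoff $\eta$, keeping the $\delta$-dependence of the constants explicit, and verifying that the Neumann contribution that is dropped indeed has the favorable sign — which it does, since $\alpha>0$ and $u_\alpha\ge0$, while the curvature part $\tfrac{n-2}{2}\int_{\partial M}h_g u_\alpha^2\eta^2$ is harmless because it is already controlled by $C\int_{\partial M}u_\alpha^2\eta^2$.
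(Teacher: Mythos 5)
Your proof is correct and follows essentially the same strategy as the paper: test the Euler--Lagrange equation \eqref{eq:E-L} against $\eta^{2}u_{\alpha}$ with an annular cutoff, absorb the cross term by Young's inequality, drop the nonpositive $\alpha$-term on the boundary, and feed in the pointwise decay from Corollary~\ref{cor:Pointwise Estimation}; the co-area/pigeonhole step for choosing $\delta_{\alpha}$ is the standard justification the paper leaves implicit. The only (cosmetic) difference is that your cutoff vanishes on $B_{\delta/4}(x_{\alpha})$ and equals $1$ outside $B_{\delta/2}(x_{\alpha})$, which yields the stated bound on $M\setminus B_{\delta/2}(x_{\alpha})$ directly, whereas the paper's cutoff is shifted by a factor of $2$ (a harmless relabeling of $\delta$).
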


\begin{proof}
Let $\eta$ be a cut-off function satisfying $\operatorname{supp}(\eta) \subset M \backslash B_{\delta / 2}(x_{\alpha})$ and $\eta \equiv 1$ in $M \backslash B_{\delta}(x_{\alpha}) .$ Multiplying the Euler-Lagrange equation \eqref{eq:E-L} by $\eta^{2} u_{\alpha}$ and integrating by parts, we have
$$
\int_{M} \nabla_{g} u_{\alpha} \nabla_{g}(\eta^{2} u_{\alpha}) \, \ud  v_{g} \leq \ell_{\alpha} \int_{M} \eta^{2} u_{\alpha}^{2^*} \, \ud  v_{g}-\frac{n-2}{2} \int_{\partial M} h_g \eta^{2} u_{\alpha}^{2} \, \ud  s_{g}.
$$
It follows that
$$
\int_{M} \eta^{2}|\nabla_{g} u_{\alpha}|^{2} \, \ud  v_{g} \leq C \int_{\partial M}\eta^{2} u_{\alpha}^{2} \, \ud  s_{g}+C\int_{M} \eta^{2} u_{\alpha}^{2^*} \, \ud  v_{g}+C \int_{M}|\nabla_{g} \eta|^{2} u_{\alpha}^{2} \, \ud  v_{g}.
$$
Therefore, this corollary follows immediately from Corollary \ref{cor:Pointwise Estimation}.
\end{proof}

\section{Energy estimates}

Let $Q_{\alpha} \in \partial M$ be the closest point on $\partial M$ to $x_{\alpha}$. For some small $\delta_{0}$ to be determined in Lemma \ref{lem:lemcoercive}, let $\psi_{\alpha} \in C^{\infty}(\overline{M})$ satisfy $\psi_{\alpha}(Q_{\alpha})=1$, $1/2 \leq \psi_{\alpha} \leq 2$, $\|\psi_{\alpha}\|_{C^{2}(\overline{M})} \leq C,$ and
$$
\left\{\begin{array}{ll}
\Delta_{g} \psi_{\alpha}=0 & \text { in } B_{2 \delta_{0}}^{+}, \\
\frac{\partial_{g} \psi_{\alpha}}{\partial \nu}+\frac{n-2}{2} h_{g} \psi_{\alpha}=0 & \text { on } \partial^{\prime} B_{2 \delta_{0}}^{+}.
\end{array}\right.
$$
Here we used the Fermi coordinate with respect to metric $g$ centered at $Q_{\alpha}$. In this coordinate system,
$$
\sum_{1 \leq i, j \leq n} g_{i j}(x) \, \ud x_{i} \, \ud x_{j}= \ud x_{n}^{2}+\sum_{1 \leq i, j \leq n-1} g_{i j}(x)\, \ud x_{i} \, \ud x_{j}.
$$
Moreover, $g^{i j}$ has the following Taylor expansion near $\partial M$:

\begin{lemma}\label{lem:Fermi}
For $\{x_{k}\}_{k=1, \cdots, n}$ small,
$$
\begin{aligned}
&g^{i j}(x)=\delta^{i j}+2 h^{i j}(x^{\prime}, 0) x_{n}+O(|x|^{2}), \\
&g^{i j} \Gamma_{i j}^{k}=O(|x|),
\end{aligned}
$$
where $i, j=1, \cdots, n-1$, $h_{i j}$ is the second fundamental form of $\partial M$.
\end{lemma}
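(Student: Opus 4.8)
The plan is to read off both expansions straight from the defining properties of the Fermi coordinate system together with the normal‑coordinate choice on $\partial M$; there is no analysis here, only a Taylor expansion of the metric and its first derivatives.

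\textbf{Setup and the first expansion.} Recall that in these coordinates $x_n=\operatorname{dist}_g(\cdot,\partial M)$ (so $\partial_{x_n}$ is the inward unit normal along $\partial M$) and $(x_1,\dots,x_{n-1})$ are $g_{\partial M}$-geodesic normal coordinates on $\partial M$ centered at $Q_\alpha$; the block form $g=\ud x_n^2+\sum_{i,j\le n-1}g_{ij}(x)\,\ud x_i\,\ud x_j$ was already recorded. The normal‑coordinate choice on $\partial M$ gives $g_{ij}(0,0)=\delta_{ij}$ and $\partial_k g_{ij}(0,0)=0$ for $i,j,k\le n-1$, hence $g_{ij}(x',0)=\delta_{ij}+O(|x'|^2)$. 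In the $x_n$-direction I would Taylor expand $g_{ij}(x',x_n)=g_{ij}(x',0)+x_n\,\partial_n g_{ij}(x',0)+O(x_n^2)$ and use the classical identity relating the normal derivative of the induced metric to the second fundamental form: since coordinate fields commute, $\partial_n g_{ij}=\langle\nabla_{\partial_i}\partial_n,\partial_j\rangle+\langle\partial_i,\nabla_{\partial_j}\partial_n\rangle$, which by the Weingarten equation equals $-2h_{ij}$ at $x_n=0$ (with $h_{ij}$ the second fundamental form with respect to $\partial_{x_n}$). Combining the two expansions,
\[
g_{ij}(x)=\delta_{ij}-2h_{ij}(x',0)\,x_n+O(|x|^2),
\]
and inverting this matrix (the correction is small for $|x|$ small) gives $g^{ij}(x)=\delta^{ij}+2h^{ij}(x',0)x_n+O(|x|^2)$, the first assertion.

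\textbf{The contracted Christoffel symbol.} For tangential indices $i,j,k\le n-1$, the block‑diagonal form ($g_{in}=0$, $g_{nn}=1$, hence $g^{kn}=0$) makes the $l=n$ term drop, so
\[
\Gamma^k_{ij}=\tfrac12\,g^{kl}\big(\partial_i g_{jl}+\partial_j g_{il}-\partial_l g_{ij}\big),\qquad l\le n-1,
\]
and therefore $g^{ij}\Gamma^k_{ij}$ involves only the \emph{tangential} first derivatives $\partial_m g_{pq}$ with $m\le n-1$ — no $\partial_n$ ever appears. From the expansion above, $\partial_m g_{pq}(x',0)=O(|x'|)$ (it vanishes at $x'=0$ by the normal‑coordinate choice) and $\partial_m g_{pq}(x',x_n)-\partial_m g_{pq}(x',0)=O(x_n)$ by smoothness, so $\partial_m g_{pq}(x)=O(|x|)$ for $m\le n-1$. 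Since $g^{ij}=\delta^{ij}+O(|x|)$ and $g^{kl}=\delta^{kl}+O(|x|)$ are bounded near $0$, we get $g^{ij}\Gamma^k_{ij}=O(|x|)$, which is the second assertion.

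\textbf{On the difficulty.} This lemma has no real obstacle; it is a standard computation in Fermi coordinates. The only two points that need care are: verifying that geodesic flow along the inward normal together with the Gauss lemma produces exactly the block form $\ud x_n^2+g_{ij}\,\ud x_i\,\ud x_j$ quoted above (for which one may cite any standard reference), and fixing the sign convention for $h_{ij}$ so that it is consistent with the one used in the boundary condition $\frac{\partial u}{\partial\nu}+\tfrac{n-2}{2}h_g u$ elsewhere in the paper. Once those are in place, both expansions follow from Taylor's theorem as above.
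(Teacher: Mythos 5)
The paper does not actually give a proof here: it cites Escobar's Lemma~3.2 verbatim, so what you are really doing is reconstructing Escobar's computation. Your derivation of the first expansion is correct (modulo the usual sign bookkeeping, which you rightly flag), and your treatment of $g^{ij}\Gamma^k_{ij}$ for tangential $k\le n-1$ is exactly the standard one: only tangential derivatives $\partial_m g_{pq}$ with $m\le n-1$ appear, and these vanish at the origin by the normal-coordinate choice on $\partial M$.

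The gap is the normal component $k=n$, which you explicitly exclude (``For tangential indices $i,j,k\le n-1$ \ldots'') but which the lemma is asking for. In the Fermi block form, $g_{jn}\equiv 0$ and $g^{nl}=\delta^{nl}$, so
\[
\Gamma^n_{ij}=\tfrac12\,g^{nl}\bigl(\partial_i g_{jl}+\partial_j g_{il}-\partial_l g_{ij}\bigr)=-\tfrac12\,\partial_n g_{ij},
\]
which is \emph{not} small: by the very expansion you established, $\partial_n g_{ij}(0)=\mp 2h_{ij}(0)$ (sign depending on convention), so
\[
g^{ij}\Gamma^n_{ij}(0)=\pm\,\delta^{ij}h_{ij}(0)=\pm(n-1)\,h_g(Q_\alpha).
\]
Hence the estimate $g^{ij}\Gamma^n_{ij}=O(|x|)$ is simply false for a generic metric; it holds exactly when the mean curvature vanishes at the center $Q_\alpha$. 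This is not incidental: Escobar's Lemma~3.2 (to which the paper defers) sits after a conformal normalization making $\partial M$ minimal near $P$, and in the present paper the lemma is applied to $\hat g=\psi_\alpha^{4/(n-2)}g$ with $h_{\hat g}=0$ on $\partial' B^+_{2\delta_0}$, which is precisely the hypothesis that saves the $k=n$ case (and is also what the authors later invoke as $\sum_i\hat h^{ii}(0)=0$). You should add the $k=n$ computation above and state explicitly that this part of the lemma uses the vanishing of the mean curvature at the base point; without that, the second assertion is only proved for $k\le n-1$ and the stronger claim is not true.
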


\begin{proof}
The proof can be found in \cite[Lemma 3.2]{EscobarConformal1992}.
\end{proof}

Set $\hat{g}=\psi_{\alpha}^{4 /(n-2)} g .$ It is easy to see that $h_{\hat{g}}=0$ on $\partial^{\prime} B_{2\delta_{0}}^{+} .$ Hence, by \eqref{eq:walpha}, $u_{\alpha} / \psi_{\alpha}$ satisfy
\be\label{eq:u/psi}
\left\{\begin{array}{ll}
-\Delta_{\hat{g}} \frac{u_{\alpha}}{\psi_{\alpha}}=\ell_{\alpha} (\frac{u_{\alpha}}{\psi_{\alpha}})^{2^*-1} & \text { in } B_{2 \delta_{0}}^{+}, \\
\frac{\partial_{\hat{g}}}{\partial \nu} \frac{u_{\alpha}}{\psi_{\alpha}}+\alpha\|u_{\alpha}\|_{L^{r}(\partial M)}^{2-r} \psi_{\alpha}^{-\frac{n}{n-2}} u_{\alpha}^{r-1}=0 & \text { on } \partial^{\prime} B_{2 \delta_{0}}^{+}.
\end{array}\right.
\ee
From now on, we use the Fermi coordinate with respect to the metric $\hat{g}$ centered at $Q_{\alpha}$. It follows from Proposition \ref{pro:energy convergence} and Taylor expansion that
\be\label{eq:3.2}
\lim _{\alpha \rightarrow\infty} \int_{B_{\delta}^+}\Big\{|\nabla_{\hat{g}}(\frac{u_{\alpha}}{\psi_{\alpha}}-U_{\mu_{\alpha}})|^{2}+|\frac{u_{\alpha}}{\psi_{\alpha}}-
U_{\mu_{\alpha}}|^{2^*}\Big\} \, \ud  v_{\hat{g}}=0.
\ee
As in Corollary \ref{cor:cor2.9}, we can select $\delta_{\alpha} \in[\delta_{0} / 2, \delta_{0}]$ such that
\be\label{eq:3.3}
\int_{\partial^{\prime \prime} B^+_{\delta_{\alpha}}}|\nabla_{\hat{g}}(\frac{u_{\alpha}}{\psi_{\alpha}})|^{2} \, \ud  v_{\hat{g}} \leq C \mu_{\alpha}^{n-2},
\ee
where $C>0$ is independent of $\alpha .$

Let $h_{\xi, \lambda}(z)$, $z \in \overline{{B}_{\delta_{\alpha}}^{+}},$ be the solution of
\be\label{eq:equ1}
\left\{\begin{array}{ll}
\Delta_{\hat{g}} h_{\xi, \lambda}=0 & \text { in } B_{\delta_{\alpha}}^{+}, \\
h_{\xi, \lambda}=U_{\xi, \lambda} & \text { on } \partial^{\prime \prime} B_{\delta_{\alpha}}^{+}, \\
\frac{\partial_{\hat{g}} h_{\xi, \lambda}}{\partial \nu}=0 & \text { on } \partial^{\prime} B_{\delta_{\alpha}}^{+},
\end{array}\right.
\ee
with parameters $\xi \in \partial^{\prime} B_{\mu_{\alpha}\delta_{\alpha}  / 2}^{+}$ and $\lambda>0,$ and let $\chi_{\alpha}(z)$ be the solution of
\be\label{eq:equ2}
\left\{\begin{array}{ll}
\Delta_{\hat{g}} \chi_{\alpha}=0 & \text { in } B_{\delta_{\alpha}}^{+}, \\
\chi_{\alpha}=\frac{u_{\alpha}}{\psi_{\alpha}} & \text { on } \partial^{\prime \prime} B_{\delta_{\alpha}}^+, \\
\frac{\partial_{\hat{g}} \chi_{\alpha}}{\partial \nu}=0 & \text { on } \partial^{\prime} B_{\delta_{\alpha}}^{+}.
\end{array}\right.
\ee
Then $U_{\xi, \lambda}-h_{\xi, \lambda} \in H_{0, L}(B_{\delta_{\alpha}}^{+})$ and $u_{\alpha} / \psi_{\alpha}-\chi_{\alpha} \in H_{0, L}(B_{\delta_{\alpha}}^{+})$ are the projections of $U_{\xi, \lambda}$ and $u_{\alpha} / \psi_{\alpha}$ on $H_{0, L}(B_{\delta_{\alpha}}^+),$ respectively. Here
$$
H_{0, L}(B_{\delta_{\alpha}}^{+}):=\{u \in H^{1}(B_{\delta_{\alpha}}^{+}): u=0 \text { on } \partial^{\prime \prime} B_{\delta_{\alpha}}^{+} \text { in trace sense}\}
$$
is a Hilbert space with the inner product $\langle u, v\rangle_{\hat{g}}:=\int_{B_{\delta_{\alpha}}^{+}} \nabla_{\hat{g}} u \nabla_{\hat{g}} v \, \ud  v_{\hat{g}} .$ Denote $\|u\|=\sqrt{\langle u, u\rangle_{\hat{g}}},$ which is a norm for $u \in H_{0, L}(B_{\delta_{\alpha}}^{+})$.

Set
$$
\sigma_{\xi, \lambda}=U_{\xi, \lambda}-h_{\xi, \lambda},
$$
which satisfies $\sigma_{\xi, \lambda} \leq U_{\xi, \lambda}$ and
\be\label{eq:equ3}
\left\{\begin{array}{ll}
\Delta_{\hat{g}} \sigma_{\xi, \lambda}=\Delta_{\hat{g}} U_{\xi, \lambda} & \text { in } B_{\delta_{\alpha}}^{+}, \\
\sigma_{\xi, \lambda}=0 & \text { on } \partial^{\prime \prime} B_{\delta_{\alpha}}^{+}, \\
\frac{\partial_{\hat{g}} \sigma_{\xi, \lambda}}{\partial \nu}=\frac{\partial_{\hat{g}} U_{\xi, \lambda}}{\partial \nu} & \text { on } \partial^{\prime} B_{\delta_{\alpha}}^{+}.
\end{array}\right.
\ee
Let $(t_{\alpha}, \xi_{\alpha}, \lambda_{\alpha}) \in[\frac{1}{2}, \frac{3}{2}] \times \overline{\partial^{\prime} B_{\mu_{\alpha}\delta_{\alpha}  / 2}^{+}} \times[\frac{\mu_{\alpha}}{2}, \frac{3 \mu_{\alpha}}{2}]$ be such that
$$
\begin{aligned}
&\|\frac{u_{\alpha}}{\psi_{\alpha}}-\chi_{\alpha}-t_{\alpha} \sigma_{\xi_{\alpha}, \lambda_{\alpha}}\| \\
=&\min \Big\{\|\frac{u_{\alpha}}{\psi_{\alpha}}-\chi_{\alpha}-t \sigma_{\xi, \lambda}\|:(t, \xi, \lambda) \in[\frac{1}{2}, \frac{3}{2}] \times \overline{\partial^{\prime} B_{\mu_{\alpha}\delta_{\alpha}  / 2}^{+}} \times[\frac{\mu_{\alpha}}{2}, \frac{3 \mu_{\alpha}}{2}]\Big\}.
\end{aligned}
$$
Denote
$$
w_{\alpha}=\frac{u_{\alpha}}{\psi_{\alpha}}-\chi_{\alpha}-t_{\alpha} \sigma_{\xi_{\alpha}, \lambda_{\alpha}}.
$$
The main result in this section is the following estimate for $w_{\alpha}$:

\begin{proposition}\label{pro:proEnergyEstimation}
Assume as above that we have
$$
\|w_{\alpha}\|\leq C\Big\{\mu_{\alpha}^2\|U_{1}\|_{L^{2^{*\prime}}(B_{\mu_{\alpha}^{-1}}^{+})}+\varepsilon_{\alpha}\|U_{1}^{r-1}\|_{L^{r}(\partial^{\prime} B_{\mu_{\alpha}^{-1}}^{+})}+\mu_{\alpha}^{n-2}\|U_{1}^{2^*-2}\|_{L^{2^{*\prime}}(B_{\mu_{\alpha}^{-1}}^{+})}\Big\},
$$
where $\varepsilon_{\alpha}$ is given in \eqref{eq:varepsilonalpha}.
\end{proposition}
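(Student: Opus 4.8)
The plan is to establish this Bahri--Coron type energy estimate by testing equation \eqref{eq:u/psi} against $w_\alpha$ itself, subtracting off the linearized bubble equation, and absorbing the resulting quadratic term $\int U_{\xi_\alpha,\lambda_\alpha}^{2^*-2}w_\alpha^2$ by the non-degeneracy of the standard bubble. The first thing I would establish is the orthogonality of $w_\alpha$ to the tangent directions of the family $\{\sigma_{\xi,\lambda}\}$. After rescaling by $\mu_\alpha$, \eqref{eq:3.2} shows that $u_\alpha/\psi_\alpha-\chi_\alpha$ is, in the $H_{0,L}$-norm, asymptotic to $\sigma_{0,\mu_\alpha}$, while the harmonic correctors $h_{\xi,\lambda}$ rescale to functions that are $O(\mu_\alpha^{n-2})$ pointwise on the expanding domain, so the rescaled $\sigma_{\xi,\lambda}$ converge to $U_{\xi/\mu_\alpha,\lambda/\mu_\alpha}$. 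A short computation shows that $(t,\xi,\lambda)\mapsto\|U_1-tU_{\xi,\lambda}\|_{\mathcal{D}^{1,2}(\mathbb{R}^n_+)}$ has a strict, non-degenerate minimum at $(1,0,1)$; by stability the minimizing triple $(t_\alpha,\xi_\alpha,\lambda_\alpha)$ therefore lies in the interior of its range for large $\alpha$, and the first order conditions yield $\langle w_\alpha,\sigma_{\xi_\alpha,\lambda_\alpha}\rangle_{\hat g}=\langle w_\alpha,\partial_{\xi_k}\sigma_{\xi_\alpha,\lambda_\alpha}\rangle_{\hat g}=\langle w_\alpha,\partial_\lambda\sigma_{\xi_\alpha,\lambda_\alpha}\rangle_{\hat g}=0$ for $k=1,\dots,n-1$. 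Since $\chi_\alpha$ and $h_{\xi,\lambda}$ solve the $\hat g$-harmonic problems \eqref{eq:equ1}--\eqref{eq:equ2} with zero conormal data on $\partial'B^+_{\delta_\alpha}$ while $w_\alpha\equiv0$ on $\partial''B^+_{\delta_\alpha}$, integration by parts gives $\langle w_\alpha,\chi_\alpha\rangle_{\hat g}=\langle w_\alpha,h_{\xi_\alpha,\lambda_\alpha}\rangle_{\hat g}=0$, hence also $\langle w_\alpha,U_{\xi_\alpha,\lambda_\alpha}\rangle_{\hat g}=0$, and therefore $\|w_\alpha\|^2=\langle w_\alpha,u_\alpha/\psi_\alpha\rangle_{\hat g}$.

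Next I would integrate by parts and use \eqref{eq:u/psi} to obtain $\|w_\alpha\|^2=\ell_\alpha\int_{B^+_{\delta_\alpha}}w_\alpha(u_\alpha/\psi_\alpha)^{2^*-1}\ud v_{\hat g}-\alpha\|u_\alpha\|_{L^r(\partial M)}^{2-r}\int_{\partial'B^+_{\delta_\alpha}}w_\alpha\,\psi_\alpha^{-n/(n-2)}u_\alpha^{r-1}\ud s_{\hat g}$. Writing $u_\alpha/\psi_\alpha=V_\alpha+w_\alpha$ with $V_\alpha=t_\alpha\sigma_{\xi_\alpha,\lambda_\alpha}+\chi_\alpha\ge0$, the pointwise inequality $|(a+b)^{2^*-1}-a^{2^*-1}-(2^*-1)a^{2^*-2}b|\le C|b|^{2^*-1}$ valid for $a\ge0$, $a+b\ge0$ — which holds precisely because $1<2^*-1<2$, i.e. $n\ge7$ — lets me split the interior term as $\ell_\alpha\int w_\alpha V_\alpha^{2^*-1}+(2^*-1)\ell_\alpha\int V_\alpha^{2^*-2}w_\alpha^2+O(\int|w_\alpha|^{2^*})$, where $\int|w_\alpha|^{2^*}\le C\|w_\alpha\|^{2^*}=o(\|w_\alpha\|^2)$. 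For the quadratic term I would use $0\le\sigma_{\xi_\alpha,\lambda_\alpha}\le U_{\xi_\alpha,\lambda_\alpha}$ and $\|\chi_\alpha\|_{L^\infty(B^+_{\delta_\alpha})}\le C\mu_\alpha^{(n-2)/2}$ (from Corollary \ref{cor:Pointwise Estimation} and the maximum principle for \eqref{eq:equ2}) to reduce to $\int U_{\xi_\alpha,\lambda_\alpha}^{2^*-2}w_\alpha^2$; rescaling, extending $w_\alpha$ by zero, and using that the orthogonality relations make the rescaled $w_\alpha$ asymptotically orthogonal in $\mathcal{D}^{1,2}(\mathbb{R}^n_+)$ to $U_1$, its tangential translations and its dilation, the non-degeneracy of the Neumann bubble (the spectral gap of $\phi\mapsto(-\Delta)^{-1}(U_1^{2^*-2}\phi)$) gives a fixed $\varepsilon_0>0$ with $(2^*-1)\ell_\alpha t_\alpha^{2^*-2}\int U_{\xi_\alpha,\lambda_\alpha}^{2^*-2}w_\alpha^2\le(1-\varepsilon_0+o(1))\|w_\alpha\|^2$.

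For $\ell_\alpha\int w_\alpha V_\alpha^{2^*-1}$, expanding $V_\alpha$ about $t_\alpha U_{\xi_\alpha,\lambda_\alpha}$ (the errors being governed, after rescaling and Hölder's inequality, by the $\mu_\alpha^{n-2}$ bounds on the rescaled $\chi_\alpha,h_{\xi_\alpha,\lambda_\alpha}$, which produces the $\mu_\alpha^{n-2}\|U_1^{2^*-2}\|_{L^{2^{*\prime}}(B^+_{\mu_\alpha^{-1}})}$ contribution) reduces it to $\ell_\alpha t_\alpha^{2^*-1}\int w_\alpha U_{\xi_\alpha,\lambda_\alpha}^{2^*-1}$, and here I would invoke $\langle w_\alpha,\sigma_{\xi_\alpha,\lambda_\alpha}\rangle_{\hat g}=0$: integrating by parts in \eqref{eq:equ3} and inserting the flat bubble equation $-\Delta U_{\xi_\alpha,\lambda_\alpha}=\frac1{2^{2/n}S}U_{\xi_\alpha,\lambda_\alpha}^{2^*-1}$, $\partial U_{\xi_\alpha,\lambda_\alpha}/\partial\nu=0$ on $\partial'$, shows $\int w_\alpha U_{\xi_\alpha,\lambda_\alpha}^{2^*-1}$ equals a metric error $\int w_\alpha(\Delta_{\hat g}-\Delta)U_{\xi_\alpha,\lambda_\alpha}$ plus a boundary remainder. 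Finally, rescaling the boundary term of $\|w_\alpha\|^2$ converts the coefficient $\alpha\|u_\alpha\|_{L^r(\partial M)}^{2-r}$ into exactly $\varepsilon_\alpha$ from \eqref{eq:varepsilonalpha}, and then Hölder's inequality with exponents $\frac{2(n-1)}{n-2}$ and $r$, the trace Sobolev inequality $\|w_\alpha\|_{L^{2(n-1)/(n-2)}(\partial'B^+)}\le C\|\nabla w_\alpha\|_{L^2(B^+)}$, and $u_\alpha\to U_1$ in $C^1_{loc}$ together with the far-field decay from Corollary \ref{cor:Pointwise Estimation}, bound it by $C\varepsilon_\alpha\|w_\alpha\|\|U_1^{r-1}\|_{L^r(\partial'B^+_{\mu_\alpha^{-1}})}$. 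Collecting the estimates, absorbing $(1-\varepsilon_0+o(1))\|w_\alpha\|^2$ into the left side, and dividing by $\|w_\alpha\|$ gives the claim.

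The hard part will be the metric error $\int w_\alpha(\Delta_{\hat g}-\Delta)U_{\xi_\alpha,\lambda_\alpha}$: the linear-in-$x_n$ part of the Fermi expansion of $\hat g$ in Lemma \ref{lem:Fermi} would naively produce a contribution of size $\mu_\alpha$, not $\mu_\alpha^2$. It is exactly the conformal normalization $h_{\hat g}\equiv0$ on $\partial'B^+_{2\delta_0}$ — which forces that linear coefficient to be trace-free — together with the cancellation gained when a trace-free symmetric $2$-tensor is contracted against the Hessian of a function radial about $\xi_\alpha\in\partial'$, that brings this down to $O(\mu_\alpha^2\|U_1\|_{L^{2^{*\prime}}(B^+_{\mu_\alpha^{-1}})})$, using the pointwise bounds $|x|^k|\nabla^kU_{\xi_\alpha,\lambda_\alpha}|\le CU_{\xi_\alpha,\lambda_\alpha}$ to pass to the $L^{2^{*\prime}}$ norm, whose rescaled size is $\mu_\alpha^2\|U_1\|_{L^{2^{*\prime}}(B^+_{\mu_\alpha^{-1}})}$. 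This is the step where the mean curvature term on the left of \eqref{eq:mainine1} is paid for; the remaining bookkeeping — the rescaled sizes of $\chi_\alpha$, $h_{\xi,\lambda}$ and of the various $L^{2^{*\prime}}$ and $L^r$ norms, the verification that the minimum in the definition of $(t_\alpha,\xi_\alpha,\lambda_\alpha)$ is attained at an interior point, and the $C^1_{loc}$ and far-field convergences — is routine given Proposition \ref{pro:energy convergence}, Corollary \ref{cor:Pointwise Estimation}, and \eqref{eq:3.2}--\eqref{eq:3.3}.
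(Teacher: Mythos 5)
Your proposal follows essentially the same route as the paper's proof: test the equation satisfied by $w_\alpha$ against $w_\alpha$, invoke the coercivity of the linearized quadratic form $Q_\alpha$ (Lemma \ref{lem:lemcoercive}, which is where the non-degeneracy and the orthogonality $w_\alpha\in W_\alpha$ from Lemma \ref{lem:lem3.1} come in), and then estimate the interior and boundary source terms by H\"older, the Sobolev and trace inequalities, the pointwise bound $u_\alpha\le CU_{\mu_\alpha}$ of Corollary \ref{cor:Pointwise Estimation}, and the equation \eqref{eq:eqUxilambda} for $U_{\xi_\alpha,\lambda_\alpha}$. In the paper this is organized by first deriving \eqref{eq:walphaEquation} (Lemma \ref{lem:lemwalphaEquation}) and then multiplying by $w_\alpha$; you instead start from $\|w_\alpha\|^2=\langle w_\alpha,u_\alpha/\psi_\alpha\rangle_{\hat g}$ using the orthogonalities $\langle w_\alpha,\sigma_\alpha\rangle_{\hat g}=\langle w_\alpha,\chi_\alpha\rangle_{\hat g}=\langle w_\alpha,h_\alpha\rangle_{\hat g}=0$ and substitute \eqref{eq:u/psi}. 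These are equivalent presentations.

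Where you add value: you make explicit that $\int w_\alpha U_{\xi_\alpha,\lambda_\alpha}^{2^*-1}\,\ud v_{\hat g}$ must be reduced via $\langle w_\alpha,\sigma_\alpha\rangle_{\hat g}=0$, integration by parts in \eqref{eq:equ3}, and the flat bubble equation to a pure metric error $\int w_\alpha(\Delta_{\hat g}-\Delta)U_{\xi_\alpha,\lambda_\alpha}$. The paper's written proof jumps to $\|2^{2/n}S\Delta_{\hat g}U_{\xi_\alpha,\lambda_\alpha}+U_{\xi_\alpha,\lambda_\alpha}^{2^*-1}\|_{L^{2^{*\prime}}}$, which only bounds the piece of $f_\alpha$ that has the exact coefficient $2^{-2/n}S^{-1}$; the remaining piece $(\ell_\alpha t_\alpha^{2^*-2}-2^{-2/n}S^{-1})U_{\xi_\alpha,\lambda_\alpha}^{2^*-1}$ of $f_\alpha$ is not small in $L^{2^{*\prime}}$ (its norm is of order $|\ell_\alpha t_\alpha^{2^*-2}-2^{-2/n}S^{-1}|$, which is merely $o(1)$), so one cannot conclude $\|w_\alpha\|=O(\mu_\alpha^2)$ from H\"older alone; the pairing $\int w_\alpha U_{\xi_\alpha,\lambda_\alpha}^{2^*-1}$ genuinely needs the orthogonality, exactly as you set it up. Your discussion of the trace-free cancellation coming from $h_{\hat g}\equiv0$ is also consistent with what the paper uses (tersely, via \eqref{eq:eqUxilambda}, and more explicitly in Lemma \ref{lem:lem4.1}). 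In short: same proof, but your version fills in a step the paper leaves implicit, so it is a slightly more rigorous rendition of the argument rather than a different one.
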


Define
$$
W_{\alpha}=\{w \in H_{0, L}(B_{\delta_{\alpha}}^{+}):\langle\sigma_{\xi_{\alpha}, \lambda_{\alpha}}, w\rangle_{\hat{g}}=0 \text { and }\langle u, w\rangle_{\hat{g}}=0 \text { for all } u \in E_{\alpha}\},
$$
where $E_{\alpha} \subset H_{0, L}(B_{\delta_{\alpha}}^{+})$ is the tangent space at $\sigma_{\xi_{\alpha}, \lambda_{\alpha}}$ of the $n$-dimensional surface $\{\sigma_{\xi, \lambda}: \xi \in \partial^{\prime} B^+_{\mu_{\alpha} \delta_{\alpha}}, \lambda>0\} \subset H_{0, L}(B_{\delta_{\alpha}}^{+})$. More explicitly,
$$
E_{\alpha}=\operatorname{span}\Big\{\frac{\partial \sigma_{\xi, \lambda_{\alpha}}}{\partial \xi}\Big|_{\xi=\xi_{\alpha}}, \frac{\partial \sigma_{\xi_{\alpha}, \lambda}}{\partial \lambda}\Big|_{\lambda=\lambda_{\alpha}}\Big\}.
$$

To simplify notation, henceforth we denote:
$$
h_{\alpha}=h_{\xi_{\alpha}, \lambda_{\alpha}}\quad \text{ and } \quad\sigma_{\alpha}=\sigma_{\xi_{\alpha}, \lambda_{\alpha}}.
$$

\begin{lemma}\label{lem:lem3.1}
We have:

(i) $\|\nabla_{\hat{g}} h_{\alpha}\|_{L^{2}(B^+_{\delta_{\alpha}})}+\|h_{\alpha}\|_{L^{\infty}(B^+_{\delta_{\alpha}})} \leq C \mu_{\alpha}^{(n-2) / 2}$,

(ii) $\|\nabla_{\hat{g}} \chi_{\alpha}\|_{L^{2}(B^+_{\delta_{\alpha}})}+\|\chi_{\alpha}\|_{L^{\infty}(B^+_{\delta_{\alpha}})} \leq C \mu_{\alpha}^{(n-2) / 2}$,\\
for some positive constant $C$ independent of $\alpha,$ and

(iii) $\|w_{\alpha}\| \rightarrow 0$,

(iv) $t_{\alpha} \rightarrow 1$,

(v) $\mu_{\alpha}^{-1}|\xi_{\alpha}| \rightarrow 0$,

(vi) $\mu_{\alpha}^{-1} \lambda_{\alpha} \rightarrow 1$,\\
as $\alpha \rightarrow \infty$. Furthermore, $w_{\alpha} \in W_{\alpha}$.
\end{lemma}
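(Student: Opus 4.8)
The plan is to prove the six estimates in the listed order: (i)--(ii) are self-contained elliptic estimates, (iii) follows from (i)--(ii) together with \eqref{eq:3.2}, (iv)--(vi) follow from (iii) by a rescaling/compactness argument, and the membership $w_\alpha\in W_\alpha$ then follows because (iv)--(vi) force the minimizing triple $(t_\alpha,\xi_\alpha,\lambda_\alpha)$ into the interior of its range. Throughout I will use that $\psi_\alpha$ is uniformly bounded in $C^2(\overline M)$ and bounded below, so that on $B_{\delta_\alpha}^+$ the $\hat g$-norms and the $g$-norms are uniformly comparable.

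\emph{Parts (i) and (ii).} Both $h_{\xi,\lambda}$ and $\chi_\alpha$ are $\hat g$-harmonic in $B_{\delta_\alpha}^+$ with zero conormal derivative on $\partial' B_{\delta_\alpha}^+$; hence each is the Dirichlet energy minimizer in $H^1(B_{\delta_\alpha}^+)$ among functions with the prescribed trace on $\partial''B_{\delta_\alpha}^+$ (the Neumann condition on $\partial'$ being the natural one), and by the weak maximum principle together with the Hopf lemma on $\partial'$ its supremum is bounded by the supremum of that trace. For $h_{\xi,\lambda}$: since $|\xi|\le\mu_\alpha\delta_\alpha/2$ and $\lambda\in[\mu_\alpha/2,3\mu_\alpha/2]$, one has $|z-\xi|\ge\delta_\alpha/2$ on $\partial''B_{\delta_\alpha}^+$, so $U_{\xi,\lambda}$ and its derivatives there are $O(\mu_\alpha^{(n-2)/2})$; extending this trace into $B_{\delta_\alpha}^+$ with comparable $H^1$ norm and invoking energy minimality yields the $\dot H^1$ bound, uniformly over the admissible parameters (in particular for $h_\alpha$). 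For $\chi_\alpha$: the trace on $\partial''B_{\delta_\alpha}^+$ is $u_\alpha/\psi_\alpha$, whose $L^\infty$ bound $O(\mu_\alpha^{(n-2)/2})$ comes from Corollary \ref{cor:Pointwise Estimation} (points of $\partial''B_{\delta_\alpha}^+$ lie at $g$-distance $\gtrsim\delta_0$ from $x_\alpha$) and whose tangential Dirichlet energy is $O(\mu_\alpha^{n-2})$ by \eqref{eq:3.3}; trace theory then furnishes an $H^1(B_{\delta_\alpha}^+)$-extension of norm $O(\mu_\alpha^{(n-2)/2})$ with vanishing conormal derivative on $\partial'$, and energy minimality gives the gradient bound.

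\emph{Parts (iii)--(vi).} The triple $(1,0,\mu_\alpha)$ is admissible in the minimization defining $w_\alpha$ (note $0\in\overline{\partial'B_{\mu_\alpha\delta_\alpha/2}^+}$ and $\mu_\alpha\in[\mu_\alpha/2,3\mu_\alpha/2]$), so
\[
\|w_\alpha\|\le\Big\|\frac{u_\alpha}{\psi_\alpha}-\chi_\alpha-\sigma_{0,\mu_\alpha}\Big\|\le\Big\|\nabla_{\hat g}\Big(\frac{u_\alpha}{\psi_\alpha}-U_{\mu_\alpha}\Big)\Big\|_{L^2(B_{\delta_\alpha}^+)}+\|\nabla_{\hat g}\chi_\alpha\|_{L^2(B_{\delta_\alpha}^+)}+\|\nabla_{\hat g}h_{0,\mu_\alpha}\|_{L^2(B_{\delta_\alpha}^+)},
\]
and the right side tends to $0$ by \eqref{eq:3.2} (with $\delta=\delta_0\ge\delta_\alpha$) and (i)--(ii); this proves (iii). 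Combining (iii) with the same display gives $\|t_\alpha\sigma_{\xi_\alpha,\lambda_\alpha}-\sigma_{0,\mu_\alpha}\|\to0$, which I would then rescale by $z=\mu_\alpha y$: writing $\bar\xi_\alpha=\mu_\alpha^{-1}\xi_\alpha$, $\bar\lambda_\alpha=\mu_\alpha^{-1}\lambda_\alpha$, the rescaling of $\sigma_{\xi,\lambda}$ equals $U_{\bar\xi,\bar\lambda}$ minus a harmonic correction whose $\dot H^1$ norm is $O(\mu_\alpha^{(n-2)/2})\to0$ by (i), and likewise the rescalings of $\chi_\alpha$ and $w_\alpha$ tend to $0$ in $\dot H^1_{\mathrm{loc}}$ by (ii) and (iii). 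Since the rescaled metrics are uniformly comparable to, and converge locally to, the Euclidean one, and $\mu_\alpha^{(n-2)/2}(u_\alpha/\psi_\alpha)(\mu_\alpha\,\cdot)\to U_1$ in $\dot H^1_{\mathrm{loc}}(\overline{\R^n_+})$ by \eqref{eq:3.2}, we obtain $t_\alpha U_{\bar\xi_\alpha,\bar\lambda_\alpha}\to U_1$ in $\dot H^1_{\mathrm{loc}}(\overline{\R^n_+})$. Passing to a subsequence, $t_\alpha\to t_*\in[\tfrac12,\tfrac32]$, $\bar\xi_\alpha\to\xi_*$, $\bar\lambda_\alpha\to\lambda_*\in[\tfrac12,\tfrac32]$, and $t_*U_{\xi_*,\lambda_*}\equiv U_1$; comparing the locations and heights of the maxima forces $\xi_*=0$ and $t_*\lambda_*^{-(n-2)/2}=1$, and comparing the radial profiles then forces $\lambda_*=1$, $t_*=1$. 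Since the limit does not depend on the subsequence, (iv)--(vi) follow.

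\emph{Membership $w_\alpha\in W_\alpha$ and the main obstacle.} By (iv)--(vi), for all large $\alpha$ the minimizer $(t_\alpha,\xi_\alpha,\lambda_\alpha)$ lies in the interior of $[\tfrac12,\tfrac32]\times\overline{\partial'B_{\mu_\alpha\delta_\alpha/2}^+}\times[\tfrac{\mu_\alpha}{2},\tfrac{3\mu_\alpha}{2}]$. Differentiating the defining elliptic problems in the parameters shows $(\xi,\lambda)\mapsto\sigma_{\xi,\lambda}$ is $C^1$ into $H_{0,L}(B_{\delta_\alpha}^+)$, so the first-order optimality conditions for the distance functional at this interior minimum give $\langle\sigma_\alpha,w_\alpha\rangle_{\hat g}=0$ (from varying $t$) and $\langle\partial_\xi\sigma_{\xi,\lambda_\alpha}|_{\xi=\xi_\alpha},w_\alpha\rangle_{\hat g}=\langle\partial_\lambda\sigma_{\xi_\alpha,\lambda}|_{\lambda=\lambda_\alpha},w_\alpha\rangle_{\hat g}=0$ (from varying $\xi$ and $\lambda$), i.e.\ $w_\alpha\perp\sigma_\alpha$ and $w_\alpha\perp E_\alpha$; hence $w_\alpha\in W_\alpha$. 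I expect the main obstacle to be (iv)--(vi): one must transport the $H^1$-closeness faithfully across the $\mu_\alpha$-rescaling, verify that the harmonic corrections $h_{\xi,\lambda}$ and $\chi_\alpha$ become genuinely negligible in the rescaled $\dot H^1_{\mathrm{loc}}$ topology, and rule out the parameters escaping to the boundary of their ranges — and it is precisely this interiority that the orthogonality relations defining $W_\alpha$ require.
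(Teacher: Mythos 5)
Your proposal is correct and follows essentially the same route as the paper: (i)--(ii) via the energy-minimizing characterization of the harmonic extensions with Neumann data on $\partial'$, bounding the competitor by the $H^1(\partial'')$ norm of the trace (the paper builds an explicit extension $\eta$ and tests the equation against $h_\alpha-\eta$, which is the same thing), plus the maximum principle for the sup bound; (iii) via admissibility of $(1,0,\mu_\alpha)$ and the triangle inequality together with \eqref{eq:3.2}; and membership $w_\alpha\in W_\alpha$ from interiority of the minimizer and the first-order conditions. The only place you go beyond the paper is (iv)--(vi), where the paper just says ``a simple calculation'' after reaching $\|t_\alpha U_{\xi_\alpha,\lambda_\alpha}-U_{\mu_\alpha}\|\to 0$; your $\mu_\alpha$-rescaling and subsequential compactness argument is a sound way to make that precise.
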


\begin{proof}
Let $\eta \in H^{1}(B^+_{\delta_{\alpha}})$ be an extension of $U_{\xi_{\alpha}, \lambda_{\alpha}}|_{\partial^{\prime \prime} B_{\delta_{\alpha}}^{+}}$ such that
$$
\|\eta\|_{H^{1}(B^+_{\delta_{\alpha}})}^{2} \leq C\Big(\int_{\partial^{\prime \prime} B_{\delta_{\alpha}}^{+}}|\nabla U_{\xi_{\alpha}, \lambda_{\alpha}}|^{2}+U_{\xi_{\alpha}, \lambda_{\alpha}}^{2}\Big),
$$
where $C>0$ is independent of $\alpha$. Multiplying \eqref{eq:equ1} by $h_{\alpha}-\eta \in H_{0, L}(B^+_{\delta_{\alpha}})$ and integrating by parts we have
$$
0=\int_{B^+_{\delta_{\alpha}}} \nabla_{\hat{g}} h_{\alpha} \nabla_{\hat{g}}(h_{\alpha}-\eta) \, \ud v_{\hat{g}} \geq \frac{1}{2}(\|\nabla_{\hat{g}} h_{\alpha}\|_{L^{2}(B^+_{\delta \alpha})}^{2}-\|\nabla_{\hat{g}} \eta\|_{L^{2}(B^+_{\delta \alpha})}^{2}).
$$
Thus we obtained the $L^{2}$-estimate for $\nabla_{\hat{g}} h_{\alpha}$. The $L^{\infty}$-estimate for $h_{\alpha}$ follow easily from the maximum principle. Hence, we verified (i). Similarly, we can verify (ii) by taking into account Corollary \ref{cor:Pointwise Estimation} and \eqref{eq:3.3}.

By the definition of $t_{\alpha}$ and $\sigma_{\alpha}$,
$$
\begin{aligned}
\|t_{\alpha} \sigma_{\alpha}-\sigma_{0, \mu_{\alpha}}\| & \leq\|\frac{u_{\alpha}}{\psi_{\alpha}}-\chi_{\alpha}-t_{\alpha} \sigma_{\alpha}\|+\|\frac{u_{\alpha}}{\psi_{\alpha}}-\chi_{\alpha}-\sigma_{0, \mu_{\alpha}}\| \\
& \leq 2\|\frac{u_{\alpha}}{\psi_{\alpha}}-\chi_{\alpha}-\sigma_{0, \mu_{\alpha}}\| \\
& \leq 2\|\frac{u_{\alpha}}{\psi_{\alpha}}-U_{\mu_{\alpha}}\|+C \mu_{\alpha}^{(n-2) / 2} \rightarrow 0
\end{aligned}
$$
as $\alpha \rightarrow \infty,$ where we used (i), (ii), and \eqref{eq:3.2}. It follows that $\|w_{\alpha}\| \rightarrow 0$, i.e., (iii) holds, and
$$
\|t_{\alpha} U_{\xi_{\alpha}, \lambda_{\alpha}}-U_{\mu_{\alpha}}\| \leq\|t_{\alpha} \sigma_{\alpha}-\sigma_{0, \mu_{\alpha}}\|+\|t_{\alpha} h_{\xi_{\alpha}, \lambda_{\alpha}}\|+\|h_{0, \mu_{\alpha}}\| \rightarrow 0
$$
as $\alpha \rightarrow \infty$. A simple calculation yields (iv), (v), and (vi). Once we have (iv), (v), and (vi), the minimum of the norm is attained in the interior of $[\frac{1}{2}, \frac{3}{2}] \times \overline{\partial^{\prime} B_{\mu_{\alpha}\delta_{\alpha}  / 2}^{+}} \times[\frac{\mu_{\alpha}}{2}, \frac{3 \mu_{\alpha}}{2}]$. Hence, a variational argument gives $w_{\alpha} \in W_{\alpha}$.
\end{proof}

In order to estimate $w_{\alpha},$ we begin by writing an equation for $w_{\alpha}$:

\begin{lemma}\label{lem:lemwalphaEquation}
$w_{\alpha}$ satisfies
\be\label{eq:walphaEquation}
\left\{\begin{array}{ll}
-\Delta_{\hat{g}} w_{\alpha}=k_{\alpha}|\Theta_{\alpha}|^{2^*-3} \Theta_{\alpha} w_{\alpha}+b^{\prime}|\Theta_{\alpha}|^{2^*-3} w_{\alpha}^{2}+b^{\prime \prime}|w_{\alpha}|^{2^*-1}+f_{\alpha} & \text { in } B_{\delta_{\alpha}}^{+}, \\
\frac{\partial_{\hat{g}}w_{\alpha}}{\partial \nu}=-\alpha\|u_{\alpha}\|_{L^{r}(\partial M)}^{2-r} \psi_{\alpha}^{-\frac{n}{n-2}} u_{\alpha}^{r-1} & \text { on } \partial^{\prime} B_{\delta_{\alpha}}^{+},
\end{array}\right.
\ee
where
$$
\begin{aligned}
k_{\alpha}&=(2^*-1)\ell_{\alpha},\\
\Theta_{\alpha}&=t_{\alpha} \sigma_{\alpha}+\chi_{\alpha}, \\
f_{\alpha}&=\ell_{\alpha} (t_{\alpha}U_{\xi_{\alpha}, \lambda_{\alpha}})^{2^*-1}+t_{\alpha} \Delta_{\hat{g}} U_{\xi_{\alpha}, \lambda_{\alpha}}+O(\mu_{\alpha}^{(n-2) / 2}) U_{\xi_{\alpha}, \lambda_{\alpha}}^{2^*-2},
\end{aligned}
$$
and $b^{\prime}$, $b^{\prime \prime}$ are bounded functions with $b^{\prime} \equiv 0$ if $n \geq 6$.
\end{lemma}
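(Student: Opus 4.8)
The plan is to obtain \eqref{eq:walphaEquation} by a direct substitution of the three equations that define the ingredients of $w_\alpha$, followed by two elementary linearizations of the critical power. Recall $w_\alpha=\frac{u_\alpha}{\psi_\alpha}-\chi_\alpha-t_\alpha\sigma_\alpha$ with $\sigma_\alpha=U_{\xi_\alpha,\lambda_\alpha}-h_\alpha$, so that $\Delta_{\hat g}\sigma_\alpha=\Delta_{\hat g}U_{\xi_\alpha,\lambda_\alpha}$. Using \eqref{eq:u/psi} for $u_\alpha/\psi_\alpha$, $\Delta_{\hat g}\chi_\alpha=0$ from \eqref{eq:equ2}, and \eqref{eq:equ3}, linearity gives in $B_{\delta_\alpha}^+$
$$-\Delta_{\hat g}w_\alpha=\ell_\alpha\Big(\frac{u_\alpha}{\psi_\alpha}\Big)^{2^*-1}+t_\alpha\Delta_{\hat g}U_{\xi_\alpha,\lambda_\alpha},$$
and on $\partial'B_{\delta_\alpha}^+$, since the $\chi_\alpha$–term contributes nothing and $\partial_{\hat g}\sigma_\alpha/\partial\nu=\partial_{\hat g}U_{\xi_\alpha,\lambda_\alpha}/\partial\nu$,
$$\frac{\partial_{\hat g}w_\alpha}{\partial\nu}=-\alpha\|u_\alpha\|_{L^r(\partial M)}^{2-r}\psi_\alpha^{-\frac{n}{n-2}}u_\alpha^{r-1}-t_\alpha\frac{\partial_{\hat g}U_{\xi_\alpha,\lambda_\alpha}}{\partial\nu}.$$
Now $\xi_\alpha\in\partial'B^+$ lies on the hyperplane $\{x_n=0\}$, so $U_{\xi_\alpha,\lambda_\alpha}$ is even in $x_n$ and its Euclidean normal derivative vanishes there; and in the Fermi coordinates for $\hat g$ one has $\hat g^{in}=0$ for $i<n$, so the $\hat g$–conormal of $\partial'B^+$ is a positive multiple of $\partial_{x_n}$. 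Hence $\partial_{\hat g}U_{\xi_\alpha,\lambda_\alpha}/\partial\nu=0$ on $\partial'B_{\delta_\alpha}^+$, which gives the boundary line of \eqref{eq:walphaEquation} exactly.

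For the interior equation, write $\frac{u_\alpha}{\psi_\alpha}=\Theta_\alpha+w_\alpha$ with $\Theta_\alpha=t_\alpha\sigma_\alpha+\chi_\alpha$, and note $\Theta_\alpha\ge0$ and $\Theta_\alpha+w_\alpha=u_\alpha/\psi_\alpha\ge0$ (maximum principle for $\chi_\alpha$ and $\sigma_\alpha$, together with $t_\alpha>0$, $u_\alpha\ge0$). The first expansion uses: for $1<p\le2$ and $a\ge0$, $a+b\ge0$, one has $\big|\,|a+b|^{p-1}(a+b)-a^p-pa^{p-1}b\,\big|\le|b|^p$ (with an extra term $\tfrac{p(p-1)}{2}a^{p-2}b^2$ when $p>2$). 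Since $n\ge7$ gives $p:=2^*-1\le2$, applying this with $a=\Theta_\alpha$, $b=w_\alpha$ yields
$$\ell_\alpha\Big(\frac{u_\alpha}{\psi_\alpha}\Big)^{2^*-1}=(2^*-1)\ell_\alpha\,\Theta_\alpha^{2^*-2}w_\alpha+\ell_\alpha\,\Theta_\alpha^{2^*-1}+b^{\prime\prime}|w_\alpha|^{2^*-1},$$
where $b^{\prime\prime}$ is bounded (by $\ell_\alpha\le C$), $b^{\prime}\equiv0$, and $\Theta_\alpha^{2^*-2}=|\Theta_\alpha|^{2^*-3}\Theta_\alpha$ because $\Theta_\alpha\ge0$. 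This produces the first and third terms of \eqref{eq:walphaEquation} with $k_\alpha=(2^*-1)\ell_\alpha$. (For $n<6$ the quadratic term $b^{\prime}|\Theta_\alpha|^{2^*-3}w_\alpha^2$ with bounded $b^{\prime}$ appears as well.)

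It remains to identify $f_\alpha=\ell_\alpha\Theta_\alpha^{2^*-1}+t_\alpha\Delta_{\hat g}U_{\xi_\alpha,\lambda_\alpha}$. Apply the same inequality a second time with $a=t_\alpha U_{\xi_\alpha,\lambda_\alpha}$ and $b=\Theta_\alpha-t_\alpha U_{\xi_\alpha,\lambda_\alpha}=-t_\alpha h_\alpha+\chi_\alpha$:
$$\ell_\alpha\Theta_\alpha^{2^*-1}=\ell_\alpha(t_\alpha U_{\xi_\alpha,\lambda_\alpha})^{2^*-1}+O\big(U_{\xi_\alpha,\lambda_\alpha}^{2^*-2}\big)\,|{-}t_\alpha h_\alpha+\chi_\alpha|+O\big(|{-}t_\alpha h_\alpha+\chi_\alpha|^{2^*-1}\big).$$
By Lemma \ref{lem:lem3.1}(i)–(ii), $\|h_\alpha\|_{L^\infty(B_{\delta_\alpha}^+)}+\|\chi_\alpha\|_{L^\infty(B_{\delta_\alpha}^+)}\le C\mu_\alpha^{(n-2)/2}$, so the two error terms are $O(\mu_\alpha^{(n-2)/2})U_{\xi_\alpha,\lambda_\alpha}^{2^*-2}$ and $O(\mu_\alpha^{(n+2)/2})$. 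On $B_{\delta_\alpha}^+$ one has $U_{\xi_\alpha,\lambda_\alpha}\gtrsim\mu_\alpha^{(n-2)/2}$ (since $\lambda_\alpha\sim\mu_\alpha$, $|\xi_\alpha|\to0$, and $|x|\le\delta_\alpha$ is bounded), hence $U_{\xi_\alpha,\lambda_\alpha}^{2^*-2}\gtrsim\mu_\alpha^{(6-n)/2}$; therefore $\mu_\alpha^{(n+2)/2}=\mu_\alpha^{(n-2)/2}\mu_\alpha^{2}\le C\mu_\alpha^{(n-2)/2}U_{\xi_\alpha,\lambda_\alpha}^{2^*-2}$ because $\mu_\alpha^{2}\le\mu_\alpha^{(6-n)/2}$ for $n\ge2$ and $\mu_\alpha<1$. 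Thus both errors collapse into $O(\mu_\alpha^{(n-2)/2})U_{\xi_\alpha,\lambda_\alpha}^{2^*-2}$, and combining with the displayed interior identity gives precisely the stated $f_\alpha$, completing \eqref{eq:walphaEquation}.

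The main obstacle is the bookkeeping around the second linearization: one must check that every remainder produced by linearizing the critical nonlinearity twice fits into exactly the admissible shapes $|\Theta_\alpha|^{2^*-3}w_\alpha^2$ (vanishing when $n\ge6$), $|w_\alpha|^{2^*-1}$, and $\mu_\alpha^{(n-2)/2}U_{\xi_\alpha,\lambda_\alpha}^{2^*-2}$, with coefficients bounded uniformly in $\alpha$. The crucial point that makes the crude remainder $O(\mu_\alpha^{(n+2)/2})$ collapse into the last shape is the pointwise lower bound $U_{\xi_\alpha,\lambda_\alpha}\gtrsim\mu_\alpha^{(n-2)/2}$ on $B_{\delta_\alpha}^+$; once this is in hand, the remaining steps — linear substitution of \eqref{eq:u/psi}, \eqref{eq:equ2}, \eqref{eq:equ3}, and the vanishing of $\partial_{\hat g}U_{\xi_\alpha,\lambda_\alpha}/\partial\nu$ on $\partial'B^+$ — are routine.
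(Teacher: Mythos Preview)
Your argument follows essentially the same route as the paper's: derive $-\Delta_{\hat g}w_\alpha$ and $\partial_{\hat g}w_\alpha/\partial\nu$ by linearity from \eqref{eq:u/psi}, \eqref{eq:equ2}, \eqref{eq:equ3}, use $\partial_{\hat g}U_{\xi_\alpha,\lambda_\alpha}/\partial\nu=0$ in Fermi coordinates for the boundary line, then linearize $(\Theta_\alpha+w_\alpha)^{2^*-1}$ around $\Theta_\alpha$ and expand the resulting $|\Theta_\alpha|^{2^*-2}\Theta_\alpha$ around $t_\alpha U_{\xi_\alpha,\lambda_\alpha}$ using Lemma~\ref{lem:lem3.1} together with the pointwise lower bound $U_{\xi_\alpha,\lambda_\alpha}\gtrsim\mu_\alpha^{(n-2)/2}$ on $B_{\delta_\alpha}^+$.

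Two small slips worth fixing. First, your appeal to a ``maximum principle for $\sigma_\alpha$'' to get $\Theta_\alpha\ge0$ is not justified ($\sigma_\alpha$ is not harmonic); the paper avoids this entirely by writing the elementary expansion with $|x|^{2^*-2}x$ in place of $x^{2^*-1}$, which only needs $x+y=u_\alpha/\psi_\alpha\ge0$. Second, from $U_{\xi_\alpha,\lambda_\alpha}\gtrsim\mu_\alpha^{(n-2)/2}$ you get $U_{\xi_\alpha,\lambda_\alpha}^{2^*-2}\gtrsim\mu_\alpha^{2}$ (since $2^*-2=4/(n-2)$), not $\mu_\alpha^{(6-n)/2}$; this directly gives $\mu_\alpha^{(n+2)/2}=\mu_\alpha^{(n-2)/2}\mu_\alpha^{2}\le C\mu_\alpha^{(n-2)/2}U_{\xi_\alpha,\lambda_\alpha}^{2^*-2}$ without the extra inequality you wrote. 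Neither issue affects the overall strategy.
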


\begin{proof}
First of all, by the definition of $w_{\alpha}$, \eqref{eq:u/psi}, \eqref{eq:equ2} and \eqref{eq:equ3},  we have
\be\label{eq:Deltawalpha}
\begin{aligned}
-\Delta_{\hat{g}} w_{\alpha}&=-\Delta_{\hat{g}}\Big(\frac{u_{\alpha}}{\psi_{\alpha}}-\chi_{\alpha}-t_{\alpha} \sigma_{\alpha}\Big)\\
&=\ell_{\alpha}(\Theta_{\alpha}+w_{\alpha})^{2^*-1}+t_{\alpha} \Delta_{\hat{g}} \sigma_{\alpha}\\
&=\ell_{\alpha}(\Theta_{\alpha}+w_{\alpha})^{2^*-1}+t_{\alpha} \Delta_{\hat{g}} U_{\xi_{\alpha}, \lambda_{\alpha}}
\end{aligned}\quad \text { in } B_{\delta_{\alpha}}^{+},
\ee
and
$$
\begin{aligned}
\frac{\partial_{\hat{g}} w_{\alpha}}{\partial \nu} &=-\alpha\|u_{\alpha}\|_{L^{r}(\partial M)}^{2-r} \psi_{\alpha}^{-\frac{n}{n-2}} u_{\alpha}^{r-1}-t_{\alpha} \frac{\partial_{\hat{g}} \sigma_{\alpha}}{\partial \nu} \\
&=-\alpha\|u_{\alpha}\|_{L^{r}(\partial M)}^{2-r} \psi_{\alpha}^{-\frac{n}{n-2}} u_{\alpha}^{r-1}-t_{\alpha} \frac{\partial_{\hat{g}} U_{\xi_{\alpha}, \lambda_{\alpha}}}{\partial \nu} \\
&=-\alpha\|u_{\alpha}\|_{L^{r}(\partial M)}^{2-r} \psi_{\alpha}^{-\frac{n}{n-2}} u_{\alpha}^{r-1}
\end{aligned}\quad \text { on } \partial^{\prime} B_{\delta_{\alpha}}^{+},
$$
where we used that $\frac{\partial_{\hat{g}} U_{\xi_{\alpha}, \lambda_{\alpha}}}{\partial \nu}=\frac{\partial U_{\xi_{\alpha}, \lambda_{\alpha}}}{\partial x_{n}}=0$ in Fermi coordinate systems. In order to simplify the right hand side in \eqref{eq:Deltawalpha}, we use the elementary expansion
$$
(x+y)^{2^*-1}=|x|^{2^*-2} x+(2^*-1)|x|^{2^*-3} x y+b^{\prime}(x, y)|x|^{2^*-3} y^{2}+b^{\prime \prime}(x, y)|y|^{2^*-1}
$$
for all $x, y \in \mathbb{R}$ such that $x+y \geq 0,$ where $b^{\prime}, b^{\prime \prime}$ are bounded functions and $b^{\prime} \equiv 0$ if $n \geq 6$. For $x=\Theta_{\alpha}$, $y=w_{\alpha},$ we obtain
$$
(\Theta_{\alpha}+w_{\alpha})^{2^*-1}=|\Theta_{\alpha}|^{2^*-2} \Theta_{\alpha}+(2^*-1)|\Theta_{\alpha}|^{2^*-3} \Theta_{\alpha} w_{\alpha} +b^{\prime}|\Theta_{\alpha}|^{2^*-3} w_{\alpha}^{2}+b^{\prime \prime}|w_{\alpha}|^{2^*-1}.
$$
Note that $\Theta_{\alpha}=t_{\alpha} U_{\xi_{\alpha}, \lambda_{\alpha}}-t_{\alpha} h_{\alpha}+\chi_{\alpha} .$ By Lemma \ref{lem:lem3.1} and properties of $U_{\xi_{\alpha}, \lambda_{\alpha}},$ we have $|\chi_{\alpha}-t_{\alpha} h_{\alpha}| \leq C \mu_{\alpha}^{(n-2) / 2} \leq C t_{\alpha} U_{\xi_{\alpha}, \lambda_{\alpha}},$ and thus by simple calculus:
$$
\begin{aligned}
|\Theta_{\alpha}|^{2^*-2} \Theta_{\alpha}=&|t_{\alpha} U_{\xi_{\alpha}, \lambda_{\alpha}}-t_{\alpha} h_{\alpha}+\chi_{\alpha}|^{2^*-2}(t_{\alpha} U_{\xi_{\alpha}, \lambda_{\alpha}}-t_{\alpha} h_{\alpha}+\chi_{\alpha}) \\
=&(t_{\alpha} U_{\xi_{\alpha}, \lambda_{\alpha}})^{2^*-1}+(t_{\alpha} U_{\xi_{\alpha}, \lambda_{\alpha}})^{2^*-2}(\chi_{\alpha}-t_{\alpha} h_{\alpha})\\
&+(2^*-2)|t_{\alpha} U_{\xi_{\alpha}, \lambda_{\alpha}}+\theta(\chi_{\alpha}-t_{\alpha} h_{\alpha})|^{2^*-3}(\chi_{\alpha}-t_{\alpha} h_{\alpha})(t_{\alpha} U_{\xi_{\alpha}, \lambda_{\alpha}}-t_{\alpha} h_{\alpha}+\chi_{\alpha})\\
=&(t_{\alpha} U_{\xi_{\alpha}, \lambda_{\alpha}})^{2^*-1}+O(\mu_{\alpha}^{(n-2) / 2} U_{\xi_{\alpha}, \lambda_{\alpha}}^{2^*-2}),
\end{aligned}
$$
where $\theta \in(0,1)$. Inserting the above expansions into \eqref{eq:Deltawalpha}, we obtain \eqref{eq:walphaEquation}.
\end{proof}

The proof of Proposition \ref{pro:proEnergyEstimation} relies on the coercivity property as in Lemma \ref{lem:lemcoercive} below. Define
$$
Q_{\alpha}(\varphi, \psi):=\int_{B_{\delta_{\alpha}}^{+}}\{ \nabla_{\hat{g}} \varphi \nabla_{\hat{g}} \psi-k_{\alpha}|\Theta_{\alpha}|^{2^*-3} \Theta_{\alpha} \varphi \psi \}\, \ud  v_{\hat{g}}
$$
for all $\varphi, \psi \in H_{0, L}(B_{\delta_{\alpha}}^{+})$, where $k_{\alpha}$ and $\Theta_{\alpha}$ are defined in Lemma \ref{lem:lemwalphaEquation}.

\begin{lemma}\label{lem:lemcoercive}
There exist $0<\delta_{0} \ll 1$, $\alpha_{0} \geq1$, and $c_{0}>0$ independent of $\alpha$ such that
$$Q_{\alpha}(w, w) \geq c_{0} \int_{B_{\delta_{\alpha}}^{+}}|\nabla_{\hat{g}} w|^{2} \, \ud
v_{\hat{g}}, \quad \forall \, w \in W_{\alpha},\ \forall \, \alpha \geq \alpha_{0}.$$
\end{lemma}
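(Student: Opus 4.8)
The plan is a blow-up argument: the coercivity of $Q_\alpha$ on $W_\alpha$ will be deduced from the non-degeneracy of the standard bubble $U$ on $\mathbb{R}^n_+$ with the Neumann condition. Fix $\delta_0>0$ once and for all, small enough that the Fermi chart and the auxiliary function $\psi_\alpha$ of Section 3 are defined on $B_{2\delta_0}^+$ with $\alpha$-independent bounds and that $\hat g$ is within $O(\delta_0)$ of the Euclidean metric there; the smallness of $\delta_0$ will be used only to absorb metric errors. Suppose the conclusion fails for this $\delta_0$: then for each $k$ there are $\alpha_k\ge k$ and $w_k\in W_{\alpha_k}$ with $\|w_k\|^2:=\int_{B_{\delta_{\alpha_k}}^+}|\nabla_{\hat g}w_k|^2\,\ud v_{\hat g}=1$ and $Q_{\alpha_k}(w_k,w_k)<1/k$; since $Q_{\alpha_k}(w_k,w_k)\ge -C$ uniformly, along a subsequence $Q_{\alpha_k}(w_k,w_k)\to\ell$ for some $\ell\le 0$. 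Rescale at the concentration scale: put $\tilde w_k(y)=\mu_{\alpha_k}^{(n-2)/2}w_k(\mu_{\alpha_k}y)$ on $B_{R_k}^+$, $R_k=\delta_{\alpha_k}/\mu_{\alpha_k}\to\infty$ (recall $\mu_{\alpha_k}\to 0$), and similarly $\tilde\sigma_k,\tilde h_k,\tilde\chi_k,\tilde\Theta_k$, together with the rescaled metric $\hat g_k(\cdot)=\hat g(\mu_{\alpha_k}\cdot)$. The transformation preserves the Dirichlet energy exactly, so $\int_{B_{R_k}^+}|\nabla_{\hat g_k}\tilde w_k|^2\,\ud v_{\hat g_k}=1$, and using $(n-2)(2^*-2)=4$ one checks
$$
Q_{\alpha_k}(w_k,w_k)=\int_{B_{R_k}^+}\Big\{|\nabla_{\hat g_k}\tilde w_k|^2-k_{\alpha_k}|\tilde\Theta_k|^{2^*-3}\tilde\Theta_k\,\tilde w_k^2\Big\}\,\ud v_{\hat g_k}.
$$
From Lemma \ref{lem:lem3.1}, \eqref{eq:limellalpha} and the scale-invariance of the Dirichlet norm: $\hat g_k\to\delta$ in $C^2_{loc}(\overline{\mathbb{R}^n_+})$ and uniformly within $O(\delta_0)$ of $\delta$ on $B_{R_k}^+$; $\tilde\sigma_k\to U$ strongly in $\mathcal{D}^{1,2}(\mathbb{R}^n_+)$ while $\tilde h_k,\tilde\chi_k\to 0$ strongly; $\tilde\Theta_k\to U$ strongly in $\mathcal{D}^{1,2}(\mathbb{R}^n_+)$ and in $C^1_{loc}(\overline{\mathbb{R}^n_+})$; and $k_{\alpha_k}\to(2^*-1)/(2^{2/n}S)$. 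Extending $\tilde w_k$ by zero, up to a further subsequence $\tilde w_k\rightharpoonup w_\infty$ in $\mathcal{D}^{1,2}(\mathbb{R}^n_+)$, with $\tilde w_k\to w_\infty$ in $L^2_{loc}$ and a.e.

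Next I would pass the two orthogonality relations defining $W_{\alpha_k}$ to the limit. Since $\tilde\sigma_k\to U$ strongly, $\langle\sigma_{\alpha_k},w_k\rangle_{\hat g}=0$ gives $\int_{\mathbb{R}^n_+}\nabla U\cdot\nabla w_\infty\,\ud y=0$. The $n$ generators of $E_{\alpha_k}$, after rescaling and renormalization, converge (up to signs) to the $n$ linearly independent elements $\partial_{y_1}U,\dots,\partial_{y_{n-1}}U,\ \partial_\lambda U_{0,\lambda}|_{\lambda=1}$ spanning the null space $\mathcal{K}$ of the linearized operator $L_0:=-\Delta-\tfrac{2^*-1}{2^{2/n}S}U^{2^*-2}$ on $\mathcal{D}^{1,2}(\mathbb{R}^n_+)$ with the Neumann condition; hence $\langle e,w_k\rangle_{\hat g}=0$ for $e\in E_{\alpha_k}$ yields $w_\infty\perp\mathcal{K}$ in $\mathcal{D}^{1,2}(\mathbb{R}^n_+)$. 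Since $U$ satisfies the Euler--Lagrange identity $\int\nabla U\cdot\nabla\varphi=\tfrac1{2^{2/n}S}\int U^{2^*-1}\varphi$, the condition $w_\infty\perp U$ is equivalent to $\int U^{2^*-1}w_\infty=0$. The non-degeneracy of $U$ — obtained by even reflection in $y_n$ from the classical spectral analysis of the Euclidean bubble, for which $L_0$ has eigenvalue $2^*-1$ on the line spanned by $U$, eigenvalue $1$ of multiplicity exactly $n$ on $\mathcal{K}$, and all further eigenvalues at most $\mu_*<1$ — furnishes a constant $c_1:=1-\mu_*>0$ with
\be\label{eq:coerc-limit}
\int_{\mathbb{R}^n_+}\Big\{|\nabla\varphi|^2-\tfrac{2^*-1}{2^{2/n}S}U^{2^*-2}\varphi^2\Big\}\,\ud y\ \ge\ c_1\int_{\mathbb{R}^n_+}|\nabla\varphi|^2\,\ud y
\ee
for all $\varphi\in\mathcal{D}^{1,2}(\mathbb{R}^n_+)$ with $\varphi\perp U$ and $\varphi\perp\mathcal{K}$.

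It remains to compare $Q_{\alpha_k}(w_k,w_k)$ with the left side of \eqref{eq:coerc-limit} at $\varphi=w_\infty$. Weak lower semicontinuity together with $\hat g_k=(1+O(\delta_0))\delta$ gives $\liminf_k\int|\nabla_{\hat g_k}\tilde w_k|^2\,\ud v_{\hat g_k}\ge(1-C\delta_0)\int_{\mathbb{R}^n_+}|\nabla w_\infty|^2\,\ud y$. The potential term converges, $k_{\alpha_k}\int|\tilde\Theta_k|^{2^*-3}\tilde\Theta_k\,\tilde w_k^2\,\ud v_{\hat g_k}\to\tfrac{2^*-1}{2^{2/n}S}\int_{\mathbb{R}^n_+}U^{2^*-2}w_\infty^2\,\ud y$, which is the main compactness step: split $B_{R_k}^+$ into $\{|y|\le L\}$ and its complement; on the ball use $\tilde\Theta_k\to U$ uniformly and $\tilde w_k\to w_\infty$ in $L^2$, and on the complement use H\"older together with $\|\,|\tilde\Theta_k|^{2^*-2}\|_{L^{n/2}(\{|y|>L\})}^{n/2}=\int_{\{|y|>L\}\cap B_{R_k}^+}|\tilde\Theta_k|^{2^*}\to 0$ as $L\to\infty$, uniformly in $k$, since $|\tilde\Theta_k(y)|\le C(1+|y|)^{2-n}+C\mu_{\alpha_k}^{n-2}$ and $\|\tilde w_k\|_{L^{2^*}}\le C$. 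Choosing $\delta_0$ so small that $C\delta_0<c_1/2$ and invoking \eqref{eq:coerc-limit} we conclude
$$
\ell=\lim_k Q_{\alpha_k}(w_k,w_k)\ \ge\ (1-C\delta_0)\int_{\mathbb{R}^n_+}|\nabla w_\infty|^2\,\ud y-\tfrac{2^*-1}{2^{2/n}S}\int_{\mathbb{R}^n_+}U^{2^*-2}w_\infty^2\,\ud y\ \ge\ \tfrac{c_1}{2}\int_{\mathbb{R}^n_+}|\nabla w_\infty|^2\,\ud y\ \ge\ 0,
$$
forcing $\ell=0$ and $w_\infty=0$. But then the potential term tends to $0$, while $\int|\nabla_{\hat g_k}\tilde w_k|^2\,\ud v_{\hat g_k}=\|w_k\|^2=1$ for every $k$, so $\ell=1$, a contradiction. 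This proves the lemma and pins down the admissible range of $\delta_0$.

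I expect the main obstacle to be the passage to the limit under the loss of compactness: verifying the strong $\mathcal{D}^{1,2}(\mathbb{R}^n_+)$-convergence of the rescaled $\sigma_{\alpha_k}$ and, especially, the convergence of the rescaled tangent spaces $E_{\alpha_k}$ to $\mathcal{K}$ — here the harmonic corrections $h_{\xi,\lambda}$ in $\sigma_{\xi,\lambda}$ and the $\mu_\alpha^{-1}$-scaling of the $\xi$- and $\lambda$-derivatives must be tracked, using Lemma \ref{lem:lem3.1} and the projection equation \eqref{eq:equ3} — and establishing the uniform $L^{n/2}$-tail smallness of $|\tilde\Theta_k|^{2^*-2}$ on the expanding domains $B_{R_k}^+$. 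The spectral input \eqref{eq:coerc-limit} itself is classical once the half-space Neumann kernel is identified with the even part of the Euclidean kernel.
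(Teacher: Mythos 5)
Your blow-up/contradiction argument reaches the same conclusion as the paper's proof, and it relies on the same ultimate spectral input, namely the half-space non-degeneracy estimate \eqref{eq:coercive}, but the route is genuinely different. The paper proceeds directly: it isolates a standalone perturbation statement, Lemma \ref{lem:lemcoercive1}, which says that for any metric $h$, potential $\Theta$ and constant $k$ that are $\varepsilon_0$-close (in $L^\infty$, $L^{2^*}$ and $\R$ respectively) to the reference data $(\delta, U_1, (2^*-1)2^{-2/n}S^{-1})$, and any $w$ that is $\varepsilon_0$-approximately orthogonal to the kernel of the Euclidean linearized operator, one subtracts an explicit small correction $\sum_j \delta_j e_j$ (with $|\delta_j| = O(\varepsilon_0 \|w\|_h)$, $e_j$ the generators of the kernel plus $U_1$) to land exactly in $E_1$, applies \eqref{eq:coercive} there, and absorbs the error; the resulting coercivity constant is quantitatively $c_1/2$ once $\varepsilon_0$ is small. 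Lemma \ref{lem:lemcoercive} then reduces to the observation (via the $\mu_\alpha$-rescaling and Lemma \ref{lem:lem3.1}) that the data $(\hat g, \Theta_\alpha, k_\alpha)$ and the orthogonality defining $W_\alpha$ fall inside this $\varepsilon_0$-window for $\alpha$ large and $\delta_0$ small. Your proof instead argues by contradiction, extracting a normalized sequence $w_k$ with $Q_{\alpha_k}(w_k,w_k)\to\ell\le 0$, rescaling, passing to a weak limit $w_\infty$, transferring the orthogonality relations to the limit, and invoking \eqref{eq:coercive} on $w_\infty$ to reach a contradiction. Both arguments must confront the same technical point — that $\sigma_{\xi_\alpha,\lambda_\alpha}$ together with its $\xi$- and $\lambda$-derivatives approximates, after rescaling, the kernel generators $\partial_{\xi_i}U|_{\xi=0}$, $\partial_\lambda U_{0,\lambda}|_{\lambda=1}$ and $U_1$ — which you flag as the main obstacle, and which the paper implicitly appeals to via Lemma \ref{lem:lem3.1}. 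The trade-off: the paper's perturbation lemma gives an explicit, uniform constant with minimal machinery, whereas your compactness proof is softer but requires, in addition, the decay estimate $|\tilde\Theta_k(y)| \le C(1+|y|)^{2-n} + C\mu_{\alpha_k}^{n-2}$ to control the $L^{n/2}$ tail of $|\tilde\Theta_k|^{2^*-2}$ over the expanding balls $B_{R_k}^+$ and a lower-semicontinuity bookkeeping of the metric errors in $\delta_0$. One small presentational wrinkle in your concluding display: the inequality $\ell \ge (1-C\delta_0)\int|\nabla w_\infty|^2 - \frac{2^*-1}{2^{2/n}S}\int U^{2^*-2}w_\infty^2$ combines a $\liminf$ (for the Dirichlet term, via weak lower semicontinuity) with a genuine limit (for the potential term), which is fine but worth stating explicitly; the final contradiction via $\ell=0\Rightarrow w_\infty=0\Rightarrow \ell=1$ is then clean.
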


\begin{proof}
The proof follows from Lemma \ref{lem:lem3.1} and Lemma \ref{lem:lemcoercive1}.
\end{proof}

Let ${D}^{1,2}(\mathbb{R}_{+}^{n})$ be the closure of $C_{c}^{\infty}(\mathbb{R}_{+}^{n} \cup \partial \mathbb{R}_{+}^{n})$ under the norm
$$
\|u\|_{{D}^{1,2}(\mathbb{R}_{+}^{n})}=\Big(\int_{\mathbb{R}_{+}^{n}}|\nabla u|^{2} \, \ud  y\Big)^{1 / 2}.
$$
In fact, ${D}^{1,2}(\mathbb{R}_{+}^{n})$ is a Hilbert space with the inner product
$$
\langle\varphi, \psi\rangle_{1}:=\int_{\mathbb{R}_{+}^{n}} \nabla \varphi \nabla \psi \, \ud  y
$$
for any $\varphi, \psi \in {D}^{1,2}(\mathbb{R}_{+}^{n})$. Define the functional
$$
Q_{1}(\varphi, \psi):=\int_{\mathbb{R}_{+}^{n}}\{ \nabla \varphi \nabla \psi -\frac{2^*-1}{2^{2 / n} S}U_{1}^{2^*-2} \varphi \psi \}\, \ud y
$$
for all $\varphi, \psi \in E_{1}$, where
$$
\begin{aligned}
E_{1}=\Big\{w \in {D}^{1,2}(\mathbb{R}_{+}^{n}):\, &\langle\frac{\partial U_{\xi, 1}}{\partial \xi_{i}}\Big|_{\xi=0}, w\rangle_{1}=\langle\frac{\partial U_{0, \lambda}}{\partial \lambda}\Big|_{\lambda=1}, w\rangle_{1}=\langle U_{1}, w\rangle_{1}=0, \\
&i=1,2, \cdots, n-1\Big\}.
\end{aligned}
$$
It is well-known that (see, e.g., Li \cite{LiOn1997}) there exists $c_{1}>0$ such that
\be\label{eq:coercive}
Q_{1}(w, w) \geq c_{1}\|w\|_{{D}^{1,2}(\mathbb{R}_{+}^{n})}^{2}, \quad \forall\,  w \in E_{1}.
\ee

\begin{lemma}\label{lem:lemcoercive1}
For $R>0$ and $x \in \partial \mathbb{R}^{n}_+$ with $|x| \leq R / 10$, let $h_{i j}$ be a Riemannian metric on $B_{R}^{+}(x)$, $k>0$ and $\Theta \in L^{2^*}(B_{R}^+(x)) .$ Denote $Q_{2}$ as the continuous bilinear form on $H_{0, L}(B_{R}^{+}(x)) \times H_{0, L}(B_{R}^{+}(x))$:
$$
Q_{2}(\varphi, \psi)=\int_{B_{R}^{+}(x)}\{ \nabla_{h} \varphi \nabla_{h} \psi-k|\Theta|^{2^*-3} \Theta \varphi \psi \}\, \ud v_{h}.
$$
There exists a small positive $\varepsilon_{0}$ depending only on $n$ such that if
$$
\|\Theta-U_{1}\|_{L^{2^*}(B_{R}^{+}(x))}+|k-\frac{2^*-1}{2^{2 / n} S}|+\|h_{i j}-\delta_{i j}\|_{L^{\infty}(B_{R}^{+}(x))} \leq \varepsilon_{0},
$$
then
$$
Q_{2}(w, w) \geq \frac{c_{1}}{2} \int_{B_{R}^{+}(x)}|\nabla_{h} w|^{2} \, \ud v_{h}
$$
for all $w \in E_{2}$, where
$$
\begin{aligned}
E_{2}=\Big\{w \in H_{0, L}(B_{R}^{+}(x)):\, &\langle\frac{\partial U_{\xi, 1}}{\partial \xi_{i}}\Big|_{\xi=0}, w\rangle_{1} \leq \varepsilon_{0}\|w\|_{h},\, \langle\frac{\partial U_{0, \lambda}}{\partial \lambda}\Big|_{\lambda=1}, w\rangle_{1} \leq \varepsilon_{0}\|w\|_{h}, \\
&\langle U_{1}, w\rangle_{1} \leq \varepsilon_{0}\|w\|_{h},\, i=1,2, \cdots, n-1\Big\}.
\end{aligned}
$$
\end{lemma}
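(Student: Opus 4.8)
The plan is to argue by contradiction and compactness, transplanting everything into the fixed Hilbert space ${D}^{1,2}(\R_{+}^{n})$, where the rigidity estimate \eqref{eq:coercive} is available. Suppose the statement fails for the constant $c_{1}$ of \eqref{eq:coercive}. Then there are sequences $R_{m}>0$, $x_{m}\in\partial\R_{+}^{n}$ with $|x_{m}|\le R_{m}/10$, Riemannian metrics $h^{(m)}$ on $B_{R_{m}}^{+}(x_{m})$, numbers $k_{m}>0$, and functions $\Theta_{m}\in L^{2^*}(B_{R_{m}}^{+}(x_{m}))$, all $1/m$-close to $\delta_{ij}$, $(2^*-1)/(2^{2/n}S)$ and $U_{1}$ respectively, together with $w_{m}\in E_{2}$ (for $\varepsilon_{0}=1/m$) such that $Q_{2}(w_{m},w_{m})<\frac{c_{1}}{2}\int|\nabla_{h^{(m)}}w_{m}|^{2}\,\ud v_{h^{(m)}}$. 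Since $Q_{2}$ and the Dirichlet energy are $2$-homogeneous and the constraint set is a cone, one normalizes $\int|\nabla_{h^{(m)}}w_{m}|^{2}\,\ud v_{h^{(m)}}=1$; in particular $w_{m}\neq0$. Extending $w_{m}$ by zero outside $B_{R_{m}}^{+}(x_{m})$ -- legitimate since $w_{m}=0$ on $\partial''B_{R_{m}}^{+}(x_{m})$ in the trace sense -- produces $\tilde w_{m}\in{D}^{1,2}(\R_{+}^{n})$; as $h^{(m)}\to\delta$ uniformly, $\int_{\R_{+}^{n}}|\nabla\tilde w_{m}|^{2}\,\ud x=1+O(1/m)$, so along a subsequence $\tilde w_{m}\rightharpoonup w$ in ${D}^{1,2}(\R_{+}^{n})$, hence $\tilde w_{m}\rightharpoonup w$ in $L^{2^*}(\R_{+}^{n})$ and $\tilde w_{m}\to w$ in $L^{2}_{loc}(\overline{\R_{+}^{n}})$ by Rellich.

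The first thing I would check is that $w\in E_{1}$: the generators $\partial_{\xi_{i}}U_{\xi,1}|_{\xi=0}$, $\partial_{\lambda}U_{0,\lambda}|_{\lambda=1}$ and $U_{1}$ all lie in ${D}^{1,2}(\R_{+}^{n})$ (their gradients decay fast enough to be square integrable), so pairing with each via $\langle\,\cdot\,,\cdot\,\rangle_{1}$ is weakly continuous, and the defining inequalities of $E_{2}$ give $\langle\,\cdot\,,\tilde w_{m}\rangle_{1}=O(1/m)\to0$. The heart of the matter is the limit of the potential term,
$$
k_{m}\int_{B_{R_{m}}^{+}(x_{m})}|\Theta_{m}|^{2^*-3}\Theta_{m}\,w_{m}^{2}\,\ud v_{h^{(m)}}\ \longrightarrow\ \frac{2^*-1}{2^{2/n}S}\int_{\R_{+}^{n}}U_{1}^{2^*-2}w^{2}\,\ud x .
$$
Here $k_{m}\to(2^*-1)/(2^{2/n}S)$, $\ud v_{h^{(m)}}=(1+O(1/m))\,\ud x$, and $\||\Theta_{m}|^{2^*-2}-U_{1}^{2^*-2}\|_{L^{n/2}(\R_{+}^{n})}\to0$ because $\Theta_{m}\to U_{1}$ in $L^{2^*}$ and $2^*/(2^*-2)=n/2$. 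Since $w_{m}$ is bounded in $L^{2^*}$ and $n/2$, $n/(n-2)$ are conjugate, it then suffices to show $\int U_{1}^{2^*-2}w_{m}^{2}\,\ud x\to\int U_{1}^{2^*-2}w^{2}\,\ud x$; this I would obtain by splitting $\R_{+}^{n}$ into $B_{\rho}^{+}$, where $U_{1}^{2^*-2}$ is bounded and $w_{m}\to w$ in $L^{2}$, and its complement, where $U_{1}^{2^*-2}\sim|x|^{-4}$ makes $\|U_{1}^{2^*-2}\|_{L^{n/2}}$ as small as desired, so that the tail is controlled by $\|w_{m}\|_{L^{2^*}}^{2}+\|w\|_{L^{2^*}}^{2}$.

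Granting these convergences, the normalization gives $Q_{2}(w_{m},w_{m})=1-\frac{2^*-1}{2^{2/n}S}\int_{\R_{+}^{n}}U_{1}^{2^*-2}w^{2}\,\ud x+o(1)$. On the other hand, weak lower semicontinuity yields $\|\nabla w\|_{L^{2}(\R_{+}^{n})}^{2}\le1$, and since $w\in E_{1}$ the estimate \eqref{eq:coercive} gives $Q_{1}(w,w)\ge c_{1}\|\nabla w\|_{L^{2}(\R_{+}^{n})}^{2}$, i.e. $\frac{2^*-1}{2^{2/n}S}\int_{\R_{+}^{n}}U_{1}^{2^*-2}w^{2}\,\ud x\le(1-c_{1})\|\nabla w\|_{L^{2}(\R_{+}^{n})}^{2}\le1-c_{1}$ (one has $0<c_{1}\le1$). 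Hence $\liminf_{m\to\infty}Q_{2}(w_{m},w_{m})\ge c_{1}>c_{1}/2$, contradicting the choice of $w_{m}$. This forces the existence of $\varepsilon_{0}=\varepsilon_{0}(n)>0$ as claimed, depending on $n$ only through $c_{1}$.

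The step I expect to be the main obstacle is precisely the limit of the potential term: the domains $B_{R_{m}}^{+}(x_{m})$, the metrics $h^{(m)}$ and the weights $\Theta_{m}$ all vary, and $\R_{+}^{n}$ is noncompact, so one must carefully combine the strong $L^{n/2}$-convergence of $|\Theta_{m}|^{2^*-2}$, the $|x|^{-4}$-decay of $U_{1}^{2^*-2}$, and Rellich compactness on balls in order to upgrade the weak convergence of $w_{m}$ to convergence of $\int U_{1}^{2^*-2}w_{m}^{2}$. The remaining steps -- passing the orthogonality constraints to the limit, lower semicontinuity, and the elementary arithmetic with $c_{1}$ -- are routine.
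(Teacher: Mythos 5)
Your proof is correct, but it takes a genuinely different route from the paper's. The paper argues directly: given $w\in E_{2}$, the almost-orthogonality conditions, combined with the fixed and invertible Gram matrix of the $n+1$ kernel generators in $D^{1,2}(\mathbb{R}_{+}^{n})$, produce unique coefficients $\delta_{1},\dots,\delta_{n+1}$ with $|\delta_{j}|=O(\varepsilon_{0}\|w\|_{h})$ so that $\tilde{w}=w-\sum_{j}\delta_{j}\,(\text{kernel direction})$ lies \emph{exactly} in $E_{1}$; one then applies \eqref{eq:coercive} to $\tilde{w}$ and absorbs the $O(\varepsilon_{0})\|w\|^{2}$ error terms coming from $w-\tilde{w}$, from $h_{ij}\approx\delta_{ij}$, from $k\approx\tfrac{2^{*}-1}{2^{2/n}S}$ and from $\Theta\approx U_{1}$ into the coercivity constant, reducing $c_{1}$ to $c_{1}/2$. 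Your argument instead negates the statement, normalizes the counterexample sequence, extends by zero, extracts a weak limit $w\in E_{1}$, passes the potential term to the limit using the strong $L^{n/2}$-convergence of $|\Theta_{m}|^{2^{*}-3}\Theta_{m}$ together with the $|x|^{-4}$ decay of $U_{1}^{2^{*}-2}$ and Rellich on balls, and then derives a contradiction from \eqref{eq:coercive} applied to $w$. The paper's perturbation argument is quicker, constructive, and makes the dependence of $\varepsilon_{0}$ on $c_{1}$ explicit, though it requires checking several $O(\varepsilon_{0})$ error estimates in $Q_{2}-Q_{1}$; your compactness argument trades that bookkeeping for a delicate but standard passage to the limit in the potential term on the noncompact half-space, which you correctly identify as the main point and whose sketch (large ball with Rellich plus a small-$L^{n/2}$ tail) is sound. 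One small remark: you should also verify, or note, that the case $w=0$ in the limit causes no trouble -- the potential term then vanishes and $\liminf Q_{2}(w_{m},w_{m})\ge 1>c_{1}/2$, so the contradiction survives.
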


\begin{proof}
From the assumptions, there exist unique $\delta_{1}, \cdots, \delta_{n+1}$ satisfying $|\delta_{j}|=O(\varepsilon_{0}\|w\|_{h})$ for every $j$ such that
$$
\tilde{w}=w-\sum_{j=1}^{n-1} \delta_{j} \frac{\partial U_{\xi, 1}}{\partial \xi_{i}}\Big|_{\xi=0}-\delta_{n} \frac{\partial U_{0, \lambda}}{\partial \lambda}\Big|_{\lambda=1}-\delta_{n+1} U_{1}
$$
belongs to $E_{1}$. It follows from \eqref{eq:coercive} that
$$
Q_{1}(\tilde{w}, \tilde{w}) \geq c_{1}\|\tilde{w}\|_{{D}^{1,2}(\mathbb{R}_{+}^{n})}.
$$
The lemma follows easily.
\end{proof}

{\bf \noindent Proof of Proposition \ref{pro:proEnergyEstimation}.}\quad Multiplying \eqref{eq:walphaEquation} by $w_{\alpha}$ and integrating over $B_{\delta_{\alpha}}^+$ we obtain
$$
Q_{\alpha}(w_{\alpha}, w_{\alpha})+o(\|w_{\alpha}\|^{2})=\int_{B_{\delta_{\alpha}}^{+}} w_{\alpha} f_{\alpha} \, \ud  v_{\hat{g}}-\int_{\partial^{\prime} B_{\delta_{\alpha}}^{+}} \alpha\|u_{\alpha}\|_{L^{r}(\partial M)}^{2-r} \psi_{\alpha}^{-\frac{n}{n-2}} u_{\alpha}^{r-1}w_{\alpha} \, \ud  s_{\hat{g}}.
$$
By H$\ddot{\text{o}}$lder's inequality, Sobolev's inequality, and Lemma \ref{lem:lemcoercive}, we have
$$
\begin{aligned}
\|w_{\alpha}\| \leq C\Big\{&\|2^{2 / n} S\Delta_{\hat{g}} U_{\xi_{\alpha}, \lambda_{\alpha}}+U_{\xi_{\alpha}, \lambda_{\alpha}}^{2^*-1}\|_{L^{2^{*\prime}}(B_{\delta_{\alpha}}^{+})}+\alpha\|u_{\alpha}\|_{L^{r}(\partial M)}^{2-r}\|u_{\alpha}^{r-1}\|_{L^{r}(\partial^{\prime} B_{\delta_{\alpha}}^{+})}\\
&+\mu_{\alpha}^{\frac{n-2}{2}}\|U_{\xi_{\alpha}, \lambda_{\alpha}}^{2^*-2}\|_{L^{2^{*\prime}}(B_{\delta_{\alpha}}^{+})}\Big\},
\end{aligned}
$$
where $2^{*\prime}=2n/(n+2)$. Note that $U_{\xi_{\alpha}, \lambda_{\alpha}}$ satisfies
\be\label{eq:eqUxilambda}
\left\{\begin{array}{ll}
-\Delta_{\hat{g}} U_{\xi_{\alpha}, \lambda_{\alpha}}=2^{-2/n}S^{-1} U_{\xi_{\alpha}, \lambda_{\alpha}}^{2^{*}-1}+O(U_{\xi_{\alpha}, \lambda_{\alpha}}) & \text { in } B_{\delta_{\alpha}}^{+}, \\
\frac{\partial_{\hat{g}} U_{\xi_{\alpha}, \lambda_{\alpha}}}{\partial \nu}=0 & \text { on } \partial^{\prime} B_{\delta_{\alpha}}^{+}.
\end{array}\right.
\ee
It follows that
$$
\|2^{2 / n} S\Delta_{\hat{g}} U_{\xi_{\alpha}, \lambda_{\alpha}}+U_{\xi_{\alpha}, \lambda_{\alpha}}^{2^*-1}\|_{L^{2^{*\prime}}(B_{\delta_{\alpha}}^{+})}\leq C\|U_{\xi_{\alpha}, \lambda_{\alpha}}\|_{L^{2^{*\prime}}(B_{\delta_{\alpha}}^{+})}\leq C \lambda_{\alpha}^2\|U_{1}\|_{L^{2^{*\prime}}(B_{\lambda_{\alpha}^{-1}}^{+})}.
$$
By Corollary \ref{cor:Pointwise Estimation}, we have $u_{\alpha} \leq C U_{\mu_{\alpha}}$. Hence
$$
\|u_{\alpha}^{r-1}\|_{L^{r}(\partial^{\prime} B_{\delta_{\alpha}}^{+})} \leq C\|U_{\mu_{\alpha}}^{r-1}\|_{L^{r}(\partial^{\prime} B_{\delta_{\alpha}}^{+})} \leq C\mu_{\alpha}^{n-1-\frac{n-2}{2} r}\|U_{1}^{r-1}\|_{L^{r}(\partial^{\prime} B_{\mu_{\alpha}^{-1}}^{+})}.
$$
Similarly, we compute:
$$
\|U_{\xi_{\alpha}, \lambda_{\alpha}}^{2^*-2}\|_{L^{2^{*\prime}}(B_{\delta_{\alpha}}^{+})} \leq C \mu_{\alpha}^{\frac{n-2}{2}}\|U_{1}^{2^*-2}\|_{L^{2^{*\prime}}(B_{\mu_{\alpha}^{-1}}^{+})}.
$$
Therefore, we obtained the estimate for $\|w_{\alpha}\|$.

\section{Proof of Theorem \ref{thm:mainthm2}}

Let
\be\label{eq:Ygu}
Y_g(u_{\alpha})=\frac{\int_{M}|\nabla_{g} u_{\alpha}|^{2} \, \ud  v_{g}+\frac{n-2}{2} \int_{\partial M} h_{g} u_{\alpha}^{2} \, \ud  s_{g}}{(\int_{M} u_{\alpha}^{2^*} \, \ud  v_{g})^{2 /2^*}}.
\ee
In this section we shall carefully exploit orthogonality in order to derive a lower bound for $Y_{g}(u_{\alpha})$. Together with the
estimates from the previous sections, it will readily imply the proof of Theorem \ref{thm:mainthm2}.

It follows from Corollary \ref{cor:Pointwise Estimation} and Corollary \ref{cor:cor2.9} that
$$
Y_g(u_{\alpha})=\frac{\int_{B_{\delta_{\alpha}}^{+}}|\nabla_{g} u_{\alpha}|^{2} \, \ud  v_{g}+\frac{n-2}{2} \int_{\partial^{\prime} B_{\delta_{\alpha}}^{+}} h_{g} u_{\alpha}^{2} \, \ud  s_{g}}{(\int_{ B_{\delta_{\alpha}}^{+}} u_{\alpha}^{2^*} \, \ud  v_{g})^{2 / 2^*}}+O(\mu_{\alpha}^{n-2}).
$$
By conformal invariance \eqref{eq:conformal invariance} and $h_{\hat{g}}=0$ on $\partial^{\prime} B_{\delta_{\alpha}}^{+},$ we have
$$
Y_g(u_{\alpha})=\frac{\int_{B_{\delta_{\alpha}}^{+}}|\nabla_{\hat{g}}(\frac{u_{\alpha}}{\psi_{\alpha}})|^{2} \, \ud  v_{\hat{g}}}{(\int_{B_{\delta_{\alpha}}^{+}}(\frac{u_{\alpha}}{\psi_{\alpha}})^{2^*} \, \ud  v_{\hat{g}})^{2 / 2^*}}+O(\mu_{\alpha}^{n-2}).
$$
By $u_{\alpha} / \psi_{\alpha}=t_{\alpha} U_{\xi_{\alpha}, \lambda_{\alpha}}-t_{\alpha} h_{\alpha}+\chi_{\alpha}+w_{\alpha}$,
$$
\int_{B_{\delta_{\alpha}}^{+}} \nabla_{\hat{g}} \chi_{\alpha} \nabla_{\hat{g}} w_{\alpha} \, \ud  v_{\hat{g}}=\int_{B_{\delta_{\alpha}}^{+}} \nabla_{\hat{g}} h_{\alpha} \nabla_{\hat{g}} w_{\alpha} \, \ud  v_{\hat{g}}=0,
$$
and the estimates in Lemma \ref{lem:lem3.1}, we have
\be\label{eq:Ygualpha}
Y_g(u_{\alpha})=F(w_{\alpha})+O(\mu_{\alpha}^{n-2}),
\ee
where
$$
F(w):=\frac{\int_{B_{\delta_{\alpha}}^{+}}|\nabla_{\hat{g}}(t_{\alpha} U_{\xi_{\alpha}, \lambda_{\alpha}}+w)|^{2} \, \ud  v_{\hat{g}}}{(\int_{B_{\delta_{\alpha}}^{+}}|t_{\alpha} U_{\xi_{\alpha}, \lambda_{\alpha}}+w|^{2^*}\, \ud  v_{\hat{g}})^{2 / 2^*}}.
$$
A Taylor expansion yields:
$$
F(w_{\alpha})=F(0)+F^{\prime}(0) w_{\alpha}+\frac{1}{2}\langle F^{\prime \prime}(0) w_{\alpha}, w_{\alpha}\rangle+o(\|w_{\alpha}\|^{2}),
$$
where $F^{\prime}$, $F^{\prime \prime}$ denote Fr\'echet derivatives. By a direct computation,
$$
\begin{aligned}
F^{\prime}(0) w_{\alpha}=& \frac{2}{(\int_{B_{\delta_{\alpha}}^{+}}|t_{\alpha} U_{\xi_{\alpha}, \lambda_{\alpha}}|^{2^*} \, \ud  v_{\hat{g}})^{2 / 2^*}}\Big\{\int_{B_{\delta_{\alpha}}^{+}} t_{\alpha} \nabla_{\hat{g}} U_{\xi_{\alpha}, \lambda_{\alpha}} \nabla_{\hat{g}} w_{\alpha} \, \ud  v_{\hat{g}}\\
&-\frac{\int_{B_{\delta_{\alpha}}^{+}}|t_{\alpha} \nabla_{\hat{g}} U_{\xi_{\alpha}, \lambda_{\alpha}}|^{2} \, \ud  v_{\hat{g}}}{\int_{B_{\delta_{\alpha}}^{+}}|t_{\alpha} U_{\xi_{\alpha}, \lambda_{\alpha}}|^{2^*} \, \ud  v_{\hat{g}}} \int_{B_{\delta_{\alpha}}^{+}}|t_{\alpha} U_{\xi_{\alpha}, \lambda_{\alpha}}|^{2^*-1} w_{\alpha} \, \ud  v_{\hat{g}}\Big\}.
\end{aligned}
$$
Recall that $\sigma_{\alpha}=U_{\xi_{\alpha}, \lambda_{\alpha}}-h_{\alpha}$ and $\int_{B_{\delta_{\alpha}}^{+}} \nabla_{\hat{g}} \sigma_{\alpha} \nabla_{\hat{g}} w_{\alpha}\, \ud  v_{\hat{g}}=\int_{B_{\delta_{\alpha}}^{+}} \nabla_{\hat{g}} h_{\alpha} \nabla_{\hat{g}} w_{\alpha}\, \ud  v_{\hat{g}}=0 .$ It
follows that $\int_{B_{\delta_{\alpha}}^{+}} \nabla_{\hat{g}} U_{\xi_{\alpha}, \lambda_{\alpha}} \nabla_{\hat{g}} w_{\alpha}\, \ud  v_{\hat{g}}=0 .$ By \eqref{eq:eqUxilambda}, we have
$$
\begin{aligned}
|F^{\prime}(0) w_{\alpha}| & \leq C|\int_{B_{\delta_{\alpha}}^{+}}|U_{\xi_{\alpha}, \lambda_{\alpha}}|^{2^*-1} w_{\alpha} \, \ud  v_{\hat{g}}| \\
&=C2^{2 / n} S|\int_{B_{\delta_{\alpha}}^{+}}(-\Delta_{\hat{g}} U_{\xi_{\alpha}, \lambda_{\alpha}}+O(U_{\xi_{\alpha}, \lambda_{\alpha}}))w_{\alpha} \, \ud  v_{\hat{g}}|\\
&=O(\int_{B_{\delta_{\alpha}}^{+}}U_{\xi_{\alpha}, \lambda_{\alpha}}w_{\alpha} \, \ud  v_{\hat{g}})\\
&\leq C\|U_{\xi_{\alpha}, \lambda_{\alpha}}\|_{L^{2^{*\prime}}(B_{\delta_{\alpha}}^{+})}\|w_{\alpha}\|\\
&\leq C\lambda_{\alpha}^2\|U_{1}\|_{L^{2^{*\prime}}(B_{\lambda_{\alpha}^{-1}}^{+})}\|w_{\alpha}\|.
\end{aligned}
$$
Similarly,
$$
\begin{aligned}
\langle F^{\prime \prime}(0) w_{\alpha}, w_{\alpha}\rangle=& \frac{2}{(\int_{B_{\delta_{\alpha}}^{+}}|t_{\alpha} U_{\xi_{\alpha}, \lambda_{\alpha}}|^{2^*} \, \ud  v_{\hat{g}})^{2 / 2^*}}\Big\{\int_{B_{\delta_{\alpha}}^{+}}|\nabla_{\hat{g}} w_{\alpha}|^{2} \, \ud  v_{\hat{g}}\\
&-(2^*-1) \frac{\int_{B_{\delta_{\alpha}}^{+}}|t_{\alpha} \nabla_{\hat{g}} U_{\xi_{\alpha}, \lambda_{\alpha}}|^{2} \, \ud  v_{\hat{g}}}{\int_{B_{\delta_{\alpha}}^{+}}|t_{\alpha} U_{\xi_{\alpha}, \lambda_{\alpha}}|^{2^*} \, \ud  v_{\hat{g}}} \int_{B_{\delta_{\alpha}}^{+}}|t_{\alpha} U_{\xi_{\alpha}, \lambda_{\alpha}}|^{2^*-2} w_{\alpha}^{2} \, \ud  v_{\hat{g}}\Big\} \\
&+O\Big(\int_{B_{\delta_{\alpha}}^{+}}|U_{\xi_{\alpha}, \lambda_{\alpha}}|^{2^*-1} w_{\alpha} \, \ud  v_{\hat{g}}\Big)^{2}.
\end{aligned}
$$
By Lemma \ref{lem:lemcoercive1}, we have
$$
\int_{B_{\delta_{\alpha}}^{+}}|\nabla_{\hat{g}} w_{\alpha}|^{2} \, \ud  v_{\hat{g}}-(2^*-1) \frac{\int_{B_{\delta_{\alpha}}^{+}}|t_{\alpha} \nabla_{\hat{g}} U_{\xi_{\alpha}, \lambda_{\alpha}}|^{2} \, \ud  v_{\hat{g}}}{\int_{B_{\delta_{\alpha}}^{+}}|t_{\alpha} U_{\xi_{\alpha}, \lambda_{\alpha}}|^{2^*} \, \ud  v_{\hat{g}}} \int_{B_{\delta_{\alpha}}^{+}}|t_{\alpha} U_{\xi_{\alpha}, \lambda_{\alpha}}|^{2^*-2} w_{\alpha}^{2} \, \ud  v_{\hat{g}}\geq\frac{c_{1}}{2}\|w_{\alpha}\|^{2}
$$
for large $\alpha .$ It follows that
$$
\langle F^{\prime \prime}(0) w_{\alpha}, w_{\alpha}\rangle \geq C\|w_{\alpha}\|^{2}+O(\mu_{\alpha}^{4})\|U_{1}\|_{L^{2^{*\prime}}
(B_{\mu_{\alpha}^{-1}}^{+})}^{2}\|w_{\alpha}\|^{2}.
$$
Noticing that $\mu_{\alpha}^2\|U_{1}\|_{L^{2^{*\prime}}(B_{\mu_{\alpha}^{-1}}^{+})} \rightarrow 0$ as $\alpha \rightarrow \infty,$ we have
$$
\begin{aligned}
F(w_{\alpha}) &=F(0)+F^{\prime}(0) w_{\alpha}+\frac{1}{2}\langle F^{\prime \prime}(0) w_{\alpha}, w_{\alpha}\rangle+o(\|w_{\alpha}\|^{2}) \\
& \geq F(0)+O(\mu_{\alpha}^2\|U_{1}\|_{L^{2^{*\prime}}(B_{\mu_{\alpha}^{-1}}^{+})}\|w_{\alpha}\|).
\end{aligned}
$$
By \eqref{eq:Ygualpha}, we conclude that
\be\label{eq:Ygugeq}
Y_g(u_{\alpha}) \geq \frac{\int_{B_{\delta_{\alpha}}^{+}}|\nabla_{\hat{g}} U_{\xi_{\alpha}, \lambda_{\alpha}}|^{2} \, \ud  v_{\hat{g}}}{(\int_{B_{\delta_{\alpha}}^{+}} U_{\xi_{\alpha}, \lambda_{\alpha}}^{2^*} \, \ud  v_{\hat{g}})^{2 / 2^*}}+O(\mu_{\alpha}^2\|U_{1}\|_{L^{2^{*\prime}}(B_{\mu_{\alpha}^{-1}}^{+})}\|w_{\alpha}\|+\mu_{\alpha}^{n-2}).
\ee

\begin{lemma}\label{lem:lem4.1}
We have
$$
\frac{\int_{B_{\delta_{\alpha}}^{+}}|\nabla_{\hat{g}} U_{\xi_{\alpha}, \lambda_{\alpha}}|^{2} \, \ud  v_{\hat{g}}}{(\int_{B_{\delta_{\alpha}}^{+}} U_{\xi_{\alpha}, \lambda_{\alpha}}^{2^*} \, \ud  v_{\hat{g}})^{2 / 2^*}}=\left\{\begin{array}{ll}
\frac{1}{2^{2/n}S}+O(\mu_{\alpha}^{2}), & n \geq 5, \\
\frac{1}{2^{2/n}S}+O(\mu_{\alpha}^{2} \log \mu_{\alpha}^{-1}), & n=4.
\end{array}\right.
$$
\end{lemma}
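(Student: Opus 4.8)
The plan is to work in the Fermi coordinates for $\hat g$ centered at $Q_\alpha$ that are already in force, expand the metric near the boundary, rescale the standard bubble, and bookkeep the order of every error term. The whole point is that the construction of $\psi_\alpha$ forces $h_{\hat g}=0$ on $\partial' B_{2\delta_0}^+$, which suppresses the terms that would otherwise be $O(\mu_\alpha)$.

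First I would record the metric expansions. In these coordinates $\hat g=\ud x_n^2+\hat g_{ij}\,\ud x_i\,\ud x_j$ with, by Lemma \ref{lem:Fermi}, $\hat g^{ij}(x)=\delta^{ij}+2\hat h^{ij}(x',0)x_n+O(|x|^2)$ for $i,j\le n-1$, where $\hat h_{ij}$ is the second fundamental form of $\partial M$ with respect to $\hat g$. Since $h_{\hat g}\equiv0$ on $\partial' B_{2\delta_0}^+$, the trace $\sum_{i}\hat h^{ii}(x',0)$ vanishes identically there, hence $\sqrt{\det\hat g}(x)=1+O(|x|^2)$ — in particular there is \emph{no} linear-in-$x_n$ term in the volume element. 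Consequently, on $B_{\delta_\alpha}^+$,
\[
|\nabla_{\hat g}U_{\xi_\alpha,\lambda_\alpha}|^2\,\ud v_{\hat g}=\Big(|\nabla U_{\xi_\alpha,\lambda_\alpha}|^2+2\hat h^{ij}(x',0)x_n\,\partial_iU_{\xi_\alpha,\lambda_\alpha}\partial_jU_{\xi_\alpha,\lambda_\alpha}+O(|x|^2|\nabla U_{\xi_\alpha,\lambda_\alpha}|^2)\Big)\ud x,
\]
and $U_{\xi_\alpha,\lambda_\alpha}^{2^*}\ud v_{\hat g}=\big(U_{\xi_\alpha,\lambda_\alpha}^{2^*}+O(|x|^2U_{\xi_\alpha,\lambda_\alpha}^{2^*})\big)\ud x$.

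Next I would rescale by $x=\lambda_\alpha y+\xi_\alpha$ (recall $\xi_\alpha\in\partial\mathbb{R}_+^n$), so that $U_{\xi_\alpha,\lambda_\alpha}(x)=\lambda_\alpha^{-(n-2)/2}U_1(y-\xi_\alpha/\lambda_\alpha)$, and use Lemma \ref{lem:lem3.1}, which gives $\lambda_\alpha/\mu_\alpha\to1$ and $|\xi_\alpha|/\lambda_\alpha\to0$. The leading terms reproduce $\int_{\mathbb{R}_+^n}|\nabla U_1|^2\,\ud x=\tfrac{1}{2^{2/n}S}$ and $\int_{\mathbb{R}_+^n}U_1^{2^*}\,\ud x=1$ from Section~2; the portions of these integrals over $\mathbb{R}_+^n\setminus B_{\delta_\alpha/\lambda_\alpha}^+$ are $O((\mu_\alpha/\delta_0)^{n-2})$ and $O((\mu_\alpha/\delta_0)^n)$ by the decay $U_1\sim|x|^{2-n}$, $|\nabla U_1|\sim|x|^{1-n}$. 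For the linear term, after rescaling it equals $2\lambda_\alpha\int_{B_{\delta_\alpha/\lambda_\alpha}^+}\hat h^{ij}(\lambda_\alpha y'+\xi_\alpha',0)\,y_n\,\partial_iU_1(y-\xi_\alpha/\lambda_\alpha)\partial_jU_1(y-\xi_\alpha/\lambda_\alpha)\,\ud y$; since $U_1$ is radial, $\int y_n\,\partial_iU_1\partial_jU_1$ is a multiple of $\delta_{ij}$ up to an $o(1)$ error from the vanishing center shift, the diagonal part is killed by $\sum_i\hat h^{ii}(x',0)=0$, and $\hat h^{ij}(\lambda_\alpha y'+\xi_\alpha',0)=\hat h^{ij}(0,0)+O(\lambda_\alpha|y'|+|\xi_\alpha'|)$; hence the linear term is $O\big(\lambda_\alpha^2\int_{B_{\delta_\alpha/\lambda_\alpha}^+}|y|^2|\nabla U_1|^2\,\ud y\big)+o(\lambda_\alpha^2)$, the same size as the quadratic remainder. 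It then remains to evaluate $\lambda_\alpha^2\int_{B_{\delta_\alpha/\lambda_\alpha}^+}|y|^2|\nabla U_1|^2\,\ud y$ and $\lambda_\alpha^2\int_{B_{\delta_\alpha/\lambda_\alpha}^+}|y|^2U_1^{2^*}\,\ud y$: from $|y|^2|\nabla U_1|^2=O(|y|^{4-2n})$ the former is $O(\mu_\alpha^2)$ for $n\ge5$ and $O(\mu_\alpha^2\log\mu_\alpha^{-1})$ for $n=4$, while from $|y|^2U_1^{2^*}=O(|y|^{2-2n})$ the latter is $O(\mu_\alpha^2)$ for all $n\ge3$. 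Combining, $\int_{B_{\delta_\alpha}^+}|\nabla_{\hat g}U_{\xi_\alpha,\lambda_\alpha}|^2\,\ud v_{\hat g}=\tfrac{1}{2^{2/n}S}+R_\alpha$ and $\int_{B_{\delta_\alpha}^+}U_{\xi_\alpha,\lambda_\alpha}^{2^*}\,\ud v_{\hat g}=1+O(\mu_\alpha^2)$ with $R_\alpha=O(\mu_\alpha^2)$ if $n\ge5$ and $R_\alpha=O(\mu_\alpha^2\log\mu_\alpha^{-1})$ if $n=4$, the tails $O(\mu_\alpha^{n-2})$, $O(\mu_\alpha^n)$ being absorbed; dividing and expanding $(1+O(\mu_\alpha^2))^{-2/2^*}=1+O(\mu_\alpha^2)$ gives the claim.

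The main obstacle is isolating and killing the linear-in-$x_n$ contribution. It is exactly the flatness $h_{\hat g}=0$ on $\partial' B_{2\delta_0}^+$ — equivalently, the presence of the mean-curvature term in $Y_g$ — that forces both $\sqrt{\det\hat g}=1+O(|x|^2)$ and $\sum_i\hat h^{ii}=0$, turning what would be $O(\mu_\alpha)$ terms into $O(\mu_\alpha^2)$; one must also check that the near-diagonal structure of $\int y_n\,\partial_iU_1\partial_jU_1$ is not spoiled by the center shift $\xi_\alpha/\lambda_\alpha\to0$. The secondary, purely computational point is the borderline behaviour of $\int|y|^2|\nabla U_1|^2$, convergent for $n\ge5$ and only logarithmically divergent for $n=4$, which is the source of the dichotomy in the statement.
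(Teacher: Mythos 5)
Your proof is essentially the same as the paper's and it is correct. Both work in Fermi coordinates for $\hat g$ centered at $Q_\alpha$, use $h_{\hat g}\equiv0$ on $\partial'B_{2\delta_0}^+$ to kill the linear-in-$x_n$ term in $\sqrt{\det\hat g}$, expand $\hat g^{ij}$ to second order via Lemma~\ref{lem:Fermi}, rescale by $\lambda_\alpha$, and then absorb the cross term $2\hat h^{ij}\,x_n\,\partial_iU\,\partial_jU$ by a symmetry plus trace-free argument, leaving the quadratic remainder $\lambda_\alpha^2\int|y|^2|\nabla U_1|^2$ as the dominant error, convergent for $n\ge5$ and logarithmic for $n=4$.

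Two remarks. First, you correctly state $\sqrt{\det\hat g}=1+O(|x|^2)$; the paper as printed writes ``$\sqrt{\det\hat g_{ij}}=1+O(|z|)$'', which as written would only yield a denominator $1+O(\mu_\alpha)$ and could not give the claimed $O(\mu_\alpha^2)$ — this is evidently a misprint, and your derivation (the linear $x_n$-coefficient is $-h_{\hat g}\sqrt{\det\hat g}=0$, and there is no linear tangential term in geodesic normal coordinates on $\partial M$) is the right one, and is in fact used implicitly in the paper's numerator expansion. Second, the paper handles the cross term by translating the domain to $B_{\delta_\alpha/2}^+(\xi_\alpha)$, where the off-diagonal integrals vanish exactly by parity and the diagonal ones are exactly proportional to $\delta_{ij}$, with a domain-change cost of $O(\mu_\alpha^{n-2})$; you instead keep the original domain, Taylor-expand $\hat h^{ij}$, and invoke an ``$o(1)$'' error from the shift $\xi_\alpha/\lambda_\alpha\to0$. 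That qualitative $o(1)$ is a bit too weak as stated: multiplied by the overall $\lambda_\alpha$ factor it only gives $o(\lambda_\alpha)$, not the required $o(\lambda_\alpha^2)$. However, the actual errors are much smaller — changing the integration ball by a shift of size $|\xi_\alpha|/\lambda_\alpha$ at radius $\delta_0/\lambda_\alpha$ contributes $O\bigl(|\xi_\alpha|\lambda_\alpha^{n-3}\bigr)$ to the $y$-integral, hence $O\bigl(|\xi_\alpha|\lambda_\alpha^{n-2}\bigr)=o(\lambda_\alpha^2)$ after the prefactor, and the finite-ball tail gives $O(\lambda_\alpha^{n-2})$ — so your conclusion stands, but you should quantify this step rather than appeal to a bare $o(1)$. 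With that repaired, the two arguments are equivalent; the paper's domain shift is marginally cleaner because it lands on exact identities.
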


\begin{proof}
Since $\sqrt{\operatorname{det} \hat{g}_{i j}}=1+O(|z|)$ in $B_{\delta_{\alpha}}^{+},$ we have
$$
\begin{aligned}
\Big(\int_{B_{\delta_{\alpha}}^{+}} U_{\xi_{\alpha}, \lambda_{\alpha}}^{2^*} \, \ud v_{\hat{g}}\Big)^{2 /2^*}=&\Big(\int_{B_{\delta_{\alpha}}^{+}} U_{\xi_{\alpha}, \lambda_{\alpha}}^{2^*}(1+O(|z|)) \, \ud z\Big)^{2 / 2^*} \\
=&\Big(\int_{B_{\delta_{\alpha}}^{+}} U_{\xi_{\alpha}, \lambda_{\alpha}}^{2^*} \, \ud z\Big)^{2 / 2^*}+O\Big(\int_{B_{\delta_{\alpha}}^{+}} U_{\xi_{\alpha}, \lambda_{\alpha}}^{2^*}|z| \, \ud z\Big) \\
=&\Big(\int_{\mathbb{R}_{+}^{n}} U_{\xi_{\alpha}, \lambda_{\alpha}}^{2^*} \, \ud z\Big)^{2 / 2^*}+O(\lambda_{\alpha}^{n-2}) \\
&+O\Big(\lambda_{\alpha}\int_{B_{\lambda_{\alpha}^{-1}}^{+}}(1+c(n)|y|^{2})^{-n}|y| \, \ud y\Big) \\
=& 1+O(\lambda_{\alpha}^{n-2})+O\Big(\lambda_{\alpha}\int_{B_{\lambda_{\alpha}^{-1}}^{+}}(1+c(n)|y|^{2})^{-n}|y| \, \ud y\Big),
\end{aligned}
$$
where we used $\lambda_{\alpha}^{-1}|\xi_{\alpha}| \rightarrow 0$ as $\alpha \rightarrow \infty$.

In addition, by Lemma \ref{lem:Fermi}, we have
$$
\begin{aligned}
\int_{B_{\delta_{\alpha}}^{+}}|\nabla_{\hat{g}} U_{\xi_{\alpha}, \lambda_{\alpha}}|^{2} \, \ud  v_{\hat{g}}=& \int_{B_{\delta_{\alpha}}^{+}}|\nabla U_{\xi_{\alpha}, \lambda_{\alpha}}|^{2} \, \ud  z+2 \hat{h}^{i j}(0) \int_{B_{\delta_{\alpha}}^{+}} \partial_{i} U_{\xi_{\alpha}, \lambda_{\alpha}} \partial_{j} U_{\xi_{\alpha}, \lambda_{\alpha}} z_{n} \, \ud  z \\
&+O\Big(\int_{B_{\delta_{\alpha}}^{+}}|\nabla U_{\xi_{\alpha}, \lambda_{\alpha}}|^{2}|z|^{2} \, \ud  z\Big).
\end{aligned}
$$
It is easy to see that
$$
\int_{B_{\delta_{\alpha}}^{+}}|\nabla U_{\xi_{\alpha}, \lambda_{\alpha}}|^{2} \, \ud  z=\int_{\mathbb{R}_{+}^{n}}|\nabla U_{\xi_{\alpha}, \lambda_{\alpha}}|^{2} \, \ud  z+O(\lambda_{\alpha}^{n-2})=\frac{1}{2^{2/n}S}+O(\lambda_{\alpha}^{n-2})
$$
and
$$
\int_{B_{\delta_{\alpha}}^{+}}|\nabla U_{\xi_{\alpha}, \lambda_{\alpha}}|^{2}|z|^{2} \, \ud  z=C(n) \lambda_{\alpha}^{2} \int_{B_{\lambda_{\alpha}^{-1}}^{+}}(1+c(n)|y|^{2})^{-n}|y|^2|y+\lambda_{\alpha}^{-1} \xi_{\alpha}|^{2} \, \ud  y,
$$
where $C(n)$ is a constant depending only on $n$. By symmetry, we have
$$
\begin{aligned}
&\sum_{i, j=1}^{n-1} \hat{h}^{i j}(0) \int_{B_{\delta_{\alpha}}^{+}} \partial_{i} U_{\xi_{\alpha}, \lambda_{\alpha}} \partial_{j} U_{\xi_{\alpha}, \lambda_{\alpha}} z_{n} \, \ud  z \\
=&\sum_{i, j=1}^{n-1} \hat{h}^{i j}(0) \int_{B_{\delta_{\alpha} / 2}^{+}(\xi_{\alpha})} \partial_{i} U_{\xi_{\alpha}, \lambda_{\alpha}} \partial_{j} U_{\xi_{\alpha}, \lambda_{\alpha}} z_{n} \, \ud  z+O(\mu_{\alpha}^{n-2}) \\
=&\sum_{i=1}^{n-1} \hat{h}^{i i}(0) \int_{B_{\delta_{\alpha} / 2}^{+}(\xi_{\alpha})}|\partial_{1} U_{\xi_{\alpha}, \lambda_{\alpha}}|^{2} z_{n} \, \ud  z+O(\mu_{\alpha}^{n-2})\\
=&O(\mu_{\alpha}^{n-2}),
\end{aligned}
$$
where we used $\sum_{i=1}^{n-1} \hat{h}^{i i}(0)=0$ since $h_{\hat{g}}$ is vanishing at $Q_{\alpha}$. The lemma follows immediately from Lemma \ref{lem:lem3.1}.
\end{proof}

{\bf \noindent Proof of Theorem \ref{thm:mainthm2}.}\quad Notice that
$$
\frac{1}{2^{2/n}S}>I_{\alpha}(u_{\alpha})=Y_g(u_{\alpha})+\alpha\|u_{\alpha}\|_{L^{r}(\partial M)}^{2}.
$$
By \eqref{eq:Ygugeq} and Lemma \ref{lem:lem4.1}, we have
$$
\alpha\|u_{\alpha}\|_{L^{r}(\partial M)}^{2}\leq O(\mu_{\alpha}^{2})+O(\mu_{\alpha}^2\|U_{1}\|_{L^{2^{*\prime}}(B_{\mu_{\alpha}^{-1}}^{+})}\|w_{\alpha}\|+
\mu_{\alpha}^{n-2}).
$$
By Proposition \ref{pro:proEnergyEstimation}, we find
\begin{equation}
\begin{aligned}
\alpha\|u_{\alpha}\|_{L^{r}(\partial M)}^{2} \leq C\Big\{&\mu_{\alpha}^2\|U_{1}\|_{L^{2^{*\prime}}(B_{\mu_{\alpha}^{-1}}^{+})}
\Big(\mu_{\alpha}^2\|U_{1}\|_{L^{2^{*\prime}}(B_{\mu_{\alpha}^{-1}}^{+})}\\
&+\varepsilon_{\alpha}\|U_{1}^{r-1}\|_{L^{r}(\partial^{\prime} B_{\mu_{\alpha}^{-1}}^{+})}+\mu_{\alpha}^{n-2}\|U_{1}^{2^*-2}\|_{L^{2^{*\prime}}(B_{\mu_{\alpha}^{-1}}^{+})}\Big)+
\mu_{\alpha}^{2}\Big\}.
\end{aligned}
\end{equation}
Due to $n \geq 7,$ we have
$$
\begin{aligned}
\|U_{1}\|_{L^{2^{*\prime}}(B_{\mu_{\alpha}^{-1}}^{+})} & \leq C, \\
\mu_{\alpha}^2\|U_{1}^{r-1}\|_{L^{r}(\partial^{\prime} B_{\mu_{\alpha}^{-1}}^{+})} & \leq C \mu_{\alpha}^2(1+\mu_{\alpha}^{\frac{n^{2}-8n+8}{2 n}})=o(1), \\
\|U_{1}^{2^*-2}\|_{L^{2^{*\prime}}(B_{\mu_{\alpha}^{-1}}^{+})} & \leq C\mu_{\alpha}^{(6-n) / 2}.
\end{aligned}
$$
From \eqref{eq:varepsilon2}, i.e., $\varepsilon_{\alpha} \leq C\alpha\|u_{\alpha}\|_{L^{r}(\partial M)}^{2},$ it follows that
$$
\alpha\|u_{\alpha}\|_{L^{r}(\partial M)}^{2} \leq C \mu_{\alpha}^{2}.
$$
On the other hand, rescaling, we have:
$$
\|u_{\alpha}\|_{L^{r}(\partial M)} \geq\|u_{\alpha}\|_{L^{r}(B_{\mu_{\alpha}}^{+}(Q_{\alpha}) \cap \partial M)} \geq C \mu_{\alpha}\|U_1\|_{L^{r}(\partial^{\prime} B_{1}^{+})} \geq C\mu_{\alpha}.
$$
Hence,
$$
\alpha \leq C.
$$
This is a contradiction. Hence, Theorem \ref{thm:mainthm2} is established for all $n\geq7$.

\section{Proof of Theorem \ref{thm:mainthm3}}

The proof of Theorem \ref{thm:mainthm3} is very similar to the proof of Theorem \ref{thm:mainthm2}. We now begin to prove Theorem \ref{thm:mainthm3} by a contradiction argument. Define
$$J_{\alpha}(u)=\frac{\int_{M}|\nabla_{g} u|^{2}\, \ud  v_{g}+\frac{n-2}{2} \int_{\partial M} h_{g} u^{2}\, \ud s_g+\alpha(\|u\|_{L^{r_1}(M)}^{2}+\|u\|_{L^{r_2}(\partial M)}^{2})}{{(\int_{M}|u|^{2^*}\, \ud v_{g} )^{2/2^*}}}$$
for all $u \in H^{1}(M) \backslash \{0\}$. It follows from the contradiction hypothesis that for all large $\alpha,$
\be\label{eq:5-1}
\zeta_{\alpha}:=\inf _{H^{1}(M) \backslash \{0\}} J_{\alpha}<\frac{1}{2^{2 / n} S}.
\ee
As shown in section 2, we know that there exists some nonnegative $u_{\alpha} \in H^{1}(M)$ such that
\be\label{eq:5-2}
\zeta_{\alpha}=\int_{M}|\nabla_{g} u_{\alpha}|^{2} \, \ud  v_{g}+\frac{n-2}{2} \int_{\partial M} h_{g} u_{\alpha}^{2}\, \ud  s_{g}+\alpha(\|u_{\alpha}\|_{L^{r_1}(M)}^{2}+\|u_{\alpha}\|_{L^{r_2}(\partial M)}^{2}),
\ee
\be\label{eq:2*norm=1-}
\int_{M} u_{\alpha}^{2^*} \, \ud  v_{g}=1.
\ee
Therefore, $u_{\alpha}$ satisfies
\be\label{eq:5-3}
\left\{\begin{array}{ll}
-\Delta_{g} u_{\alpha}=\zeta_{\alpha} u_{\alpha}^{2^*-1}-\alpha\|u_{\alpha}\|_{L^{r_1}(M)}^{2-r_1} u_{\alpha}^{r_1-1} & \text { in } M, \\
\frac{\partial_{g} u_{\alpha}}{\partial \nu}=-\frac{n-2}{2} h_{g} u_{\alpha}-\alpha\|u_{\alpha}\|_{L^{r_2}(\partial M)}^{2-r_2} u_{\alpha}^{r_2-1} & \text { on } \partial M.
\end{array}\right.
\ee

\begin{lemma}\label{lem:lem5.1}
For every $\varepsilon>0,$ there exists a constant $D(\varepsilon)>0$ depending on $\varepsilon$, $M$, and $g$ such that
$$
\Big(\int_{M}|u|^{2^*} \, \ud  v_{g}\Big)^{2 / 2^*} \leq(2^{2 / n} S+\varepsilon) \int_{M}|\nabla_{g} u|^{2} \, \ud v_{g}+D(\varepsilon)(\|u\|_{L^{r_1}(M)}^{2}+\|u\|_{L^{r_2}(\partial M)}^{2})
$$
for all $u \in H^{1}(M)$.
\end{lemma}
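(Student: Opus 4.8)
The plan is to follow the proof of Lemma \ref{lem:lem2.2}, using the unconditional Li--Zhu inequality \eqref{eq:LZ98-b} in place of \eqref{eq:LZ98}. Thus I would start from
$$
\Big(\int_{M}|u|^{2^*}\, \ud v_{g} \Big)^{2/2^*}  \le 2^{2 / n} S   \int_{M}|\nabla_{g} u|^{2}\, \ud v_{g} +A'(M,g)\Big(\int_{M} u^2\, \ud v_{g} +  \int_{\pa M} u^2 \, \ud s_g\Big),
$$
so that it only remains to absorb the two quadratic lower order terms into $\varepsilon\int_M|\nabla_g u|^2\,\ud v_g$ together with a multiple of $\|u\|_{L^{r_1}(M)}^2+\|u\|_{L^{r_2}(\partial M)}^2$.

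For the boundary term this is immediate: since $r_2=\frac{2(n-1)}{n}$ coincides with the exponent $r$ of Section 2, inequality \eqref{eq:Interpolation Inequality} gives, for every $\eta>0$,
$$
\int_{\partial M} u^2\,\ud s_g \le \eta \int_M |\nabla_g u|^2\,\ud v_g + \tilde A(\eta)\,\|u\|_{L^{r_2}(\partial M)}^2 .
$$
For the interior term I would establish the analogous interpolation inequality: for every $\eta>0$ there is $D_1(\eta)>0$ with
$$
\int_M u^2\,\ud v_g \le \eta \int_M |\nabla_g u|^2\,\ud v_g + D_1(\eta)\,\|u\|_{L^{r_1}(M)}^2 \qquad \text{for all } u\in H^1(M).
$$
Since $1\le r_1=\frac{2n}{n+2}<2<2^*$, this follows by the standard compactness argument: were it false, there would be $\eta_0>0$ and $u_k\in H^1(M)$ with $\|u_k\|_{L^2(M)}=1$, $\int_M|\nabla_g u_k|^2\,\ud v_g\le 1/\eta_0$, and $\|u_k\|_{L^{r_1}(M)}\to0$; a weak limit $u$ in $H^1(M)$, using the compact embedding $H^1(M)\hookrightarrow L^2(M)$ and H\"older's inequality (as $r_1<2$), would satisfy $\|u\|_{L^2(M)}=1$ yet $\|u\|_{L^{r_1}(M)}=0$, a contradiction. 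Equivalently one may invoke the Gagliardo--Nirenberg inequality $\|u\|_{L^2(M)}\le C\|u\|_{H^1(M)}^{\theta}\|u\|_{L^{r_1}(M)}^{1-\theta}$ with $\tfrac12=\tfrac{\theta}{2^*}+\tfrac{1-\theta}{r_1}$ followed by Young's inequality, absorbing the resulting $\eta\int_M u^2$ on the left.

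Finally I would substitute the two interpolation inequalities into \eqref{eq:LZ98-b}, choosing $\eta$ so small that $2A'(M,g)\,\eta\le\varepsilon$; this gives
$$
\Big(\int_{M}|u|^{2^*}\, \ud v_{g} \Big)^{2/2^*}\le (2^{2/n}S+\varepsilon)\int_M|\nabla_g u|^2\,\ud v_g + A'(M,g)\Big(D_1(\eta)\|u\|_{L^{r_1}(M)}^2+\tilde A(\eta)\|u\|_{L^{r_2}(\partial M)}^2\Big),
$$
and one sets $D(\varepsilon)=A'(M,g)\max\{D_1(\eta),\tilde A(\eta)\}$. There is no essential obstacle here; the only mildly delicate points are to verify the exponent ordering $r_1<2<2^*$ (so that $L^2(M)$ genuinely lies strictly between $L^{r_1}(M)$ and $L^{2^*}(M)$), and, if the Gagliardo--Nirenberg route is used rather than the contradiction argument, the absorption of the $\eta\int_M u^2$ term.
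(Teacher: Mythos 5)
Your proof is correct and follows essentially the same route as the paper: start from the unconditional Li--Zhu inequality \eqref{eq:LZ98-b} and absorb the two $L^2$ lower-order terms via the interpolation inequalities \eqref{eq:Interpolation Inequality2} and \eqref{eq:Interpolation Inequality3} obtained by compactness (your contradiction argument is exactly the standard justification the paper has in mind). The only difference is that you spell out the compactness argument and offer a Gagliardo--Nirenberg alternative, whereas the paper states the two interpolation inequalities without proof.
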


\begin{proof}
By compactness, we have that for every $\varepsilon>0$ there exists positive constant $\tilde{D}(\varepsilon)$ and $\bar{D}(\varepsilon)$ such that
\be\label{eq:Interpolation Inequality2}
\int_{M} u^{2} \, \ud  v_{g} \leq \varepsilon \int_{M}|\nabla_{g} u|^{2} \, \ud  v_{g}+\tilde{D}(\varepsilon)\|u\|_{L^{r_1}(M)}^{2}
\ee
and
\be\label{eq:Interpolation Inequality3}
\int_{\partial M} u^{2} \, \ud  s_{g} \leq \varepsilon \int_{M}|\nabla_{g} u|^{2} \, \ud  v_{g}+\bar{D}(\varepsilon)\|u\|_{L^{r_2}(\partial M)}^{2}.
\ee
Hence, the lemma follows from the inequality \eqref{eq:LZ98-b}.
\end{proof}

It follows from \eqref{eq:Interpolation Inequality3} that
$$
\begin{aligned}
J_{\alpha}(u_{\alpha}) =&\int_{M}|\nabla_{g} u_{\alpha}|^{2} \, \ud  v_{g}+\frac{n-2}{2} \int_{\partial M} h_{g} u_{\alpha}^{2}\, \ud s_g+\alpha(\|u_{\alpha}\|_{L^{r_1}(M)}^{2}+\|u_{\alpha}\|_{L^{r_2}(\partial M)}^{2}) \\
\geq&(1-\varepsilon \max _{\partial M}|h_{g}|) \int_{M}|\nabla_{g} u_{\alpha}|^{2} \, \ud  v_{g}+\alpha\|u_{\alpha}\|_{L^{r_1}(M)}^{2}+(\alpha-{E}(\varepsilon))\|u_{\alpha}\|_{L^{r_2}(\partial M)}^{2},
\end{aligned}
$$
where ${E}(\varepsilon)$ is a positive constant depending on $\varepsilon$, $M$, and $g$. Then we derive by \eqref{eq:5-1} that
$$
\int_{M}|\nabla_{g} u_{\alpha}|^{2} \, \ud  v_{g} \leq \frac{2}{2^{2/n}S}
$$
and
$$
\|u_{\alpha}\|_{L^{r_1}(M)} \rightarrow 0,\ \|u_{\alpha}\|_{L^{r_2}(\partial M)} \rightarrow 0 \quad \text { as } \alpha \rightarrow \infty.
$$
It follows that $u_{\alpha} \rightharpoonup \bar{u}$ in $H^{1}(M)$ for some $\bar{u} \in H_{0}^{1}(M)$.

We claim that, as $\alpha \rightarrow \infty$,
\be
\zeta_{\alpha} \rightarrow \frac{1}{2^{2/n}S}
\ee
and
\be\label{eq:5-7}
\alpha\|u_{\alpha}\|_{L^{r_1}(M)}^{2} \rightarrow 0,\quad \alpha\|u_{\alpha}\|_{L^{r_2}(\partial M)}^{2} \rightarrow 0.
\ee
Indeed, by Lemma \ref{lem:lem5.1} and \eqref{eq:Interpolation Inequality3}, for every $\varepsilon>0$,
$$
\begin{aligned}
1 & \leq(2^{2/n}S+\varepsilon/2) \int_{M}|\nabla_{g} u_{\alpha}|^{2} \, \ud  v_{g}+D(\varepsilon)(\|u_{\alpha}\|_{L^{r_1}(M)}^{2}+\|u_{\alpha}\|_{L^{r_2}(\partial M)}^{2}) \\
& \leq(2^{2/n}S+\varepsilon) \zeta_{\alpha}+(D(\varepsilon)-\alpha 2^{2/n}S)\|u_{\alpha}\|_{L^{r_1}(M)}^{2}+(2 D(\varepsilon)-\alpha 2^{2/n}S)\|u_{\alpha}\|_{L^{r_2}(\partial M)}^{2}.
\end{aligned}
$$
Thus
$$
\frac{1}{2^{2/n}S+\varepsilon} \leq \zeta_{\alpha}<\frac{1}{2^{2/n}S}
$$
and
$$\frac{1}{2} \alpha 2^{2/n}S(\|u_{\alpha}\|_{L^{r_1}(M)}^{2}+\|u_{\alpha}\|_{L^{r_2}(\partial M)}^{2}) \leq(2^{2/n}S+\varepsilon) \frac{1}{2^{2/n}S}-1=\frac{\varepsilon}{2^{2/n}S},
$$
if $\alpha 2^{2/n}S>4 D(\varepsilon) .$ Hence, the claim follows.

Let $x_{\alpha}$ be some maximum point of $u_{\alpha},$ set $\mu_{\alpha}:=u_{\alpha}(x_{\alpha})^{-2 /(n-2)}$.

\begin{lemma}\label{lem:lem5.2}
We have
$$
\lim _{\alpha \rightarrow \infty} \alpha \mu_{\alpha}^{2}=0.
$$
\end{lemma}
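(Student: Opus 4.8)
The plan is to reduce the statement, exactly as in the proof of Lemma~\ref{lem:lemalphato0}, to the single lower bound
\[
\liminf_{\alpha\to\infty}\|u_\alpha\|_{L^{2(n-1)/(n-2)}(\partial M)}>0,
\]
and then establish this bound without the hypothesis \eqref{eq:mainine2}, which is not available for Theorem~\ref{thm:mainthm3}. Granting the bound, the conclusion follows verbatim from the chain \eqref{eq:varepsilon1}: since $\sup_M u_\alpha=u_\alpha(x_\alpha)=\mu_\alpha^{-(n-2)/2}$ and the exponents obey $(n-2)\big(\tfrac{2(n-1)}{n-2}-r_2\big)=2r_2$, one has $\mu_\alpha^{r_2}u_\alpha^{2(n-1)/(n-2)}\le u_\alpha^{r_2}$ on $\partial M$, hence
\[
\alpha\mu_\alpha^2\le C\alpha\mu_\alpha^2\Big(\int_{\partial M}u_\alpha^{\frac{2(n-1)}{n-2}}\,\ud s_g\Big)^{2/r_2}\le C\alpha\Big(\int_{\partial M}u_\alpha^{r_2}\,\ud s_g\Big)^{2/r_2}=C\alpha\|u_\alpha\|_{L^{r_2}(\partial M)}^2\longrightarrow 0
\]
by \eqref{eq:5-7}, the first inequality using that $\big(\int_{\partial M}u_\alpha^{2(n-1)/(n-2)}\,\ud s_g\big)^{2/r_2}=\|u_\alpha\|_{L^{2(n-1)/(n-2)}(\partial M)}^{2^*}$ is bounded away from zero.

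To prove the lower bound I would first identify the weak limit. As $u_\alpha\rightharpoonup\bar u$ in $H^1(M)$ with $\bar u\in H^1_0(M)$, and as $H^1(M)$ embeds compactly into $L^{r_1}(M)$, $L^2(M)$ and $L^2(\partial M)$ (all subcritical, the trace being compact), $u_\alpha\to\bar u$ strongly in each of these; combined with $\|u_\alpha\|_{L^{r_1}(M)}\to0$ this forces $\bar u\equiv0$, so $\|u_\alpha\|_{L^2(M)}\to0$ and $\|u_\alpha\|_{L^2(\partial M)}\to0$. Feeding the boundary statement, together with $\zeta_\alpha\to 1/(2^{2/n}S)$ and \eqref{eq:5-7}, into the identity \eqref{eq:5-2} gives $\int_M|\nabla_g u_\alpha|^2\,\ud v_g\to 1/(2^{2/n}S)$. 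Applying Lemma~\ref{lem:pro1.3} to $u_\alpha$ and using \eqref{eq:2*norm=1-},
\[
1\le(S+\varepsilon)\int_M|\nabla_g u_\alpha|^2\,\ud v_g+B(\varepsilon)\big(\|u_\alpha\|_{2,M}^2+\|u_\alpha\|_{L^{2(n-1)/(n-2)}(\partial M)}^2\big);
\]
choosing $\varepsilon$ so small that $(S+\varepsilon)/(2^{2/n}S)=2^{-2/n}(1+\varepsilon/S)<1$ and letting $\alpha\to\infty$ (so that $(S+\varepsilon)\int_M|\nabla_g u_\alpha|^2\to 2^{-2/n}(1+\varepsilon/S)<1$ and $\|u_\alpha\|_{2,M}\to0$) forces $\|u_\alpha\|_{L^{2(n-1)/(n-2)}(\partial M)}^2$ to remain bounded below by a fixed positive constant for all large $\alpha$, which is the desired bound.

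The only real obstacle is this lower bound, i.e.\ ruling out that the unit $L^{2^*}$-mass escapes into the interior of $M$. In Theorem~\ref{thm:mainthm2} this is done through a Br\'ezis--Lieb splitting that would otherwise contradict \eqref{eq:mainine2}; here one instead exploits that the minimizer is normalized by \eqref{eq:2*norm=1-} while the penalty has pushed $\int_M|\nabla_g u_\alpha|^2$ down to the sharp half-space level $1/(2^{2/n}S)$, which is \emph{strictly} below the full-space Sobolev level $1/S$ that governs Lemma~\ref{lem:pro1.3}; this strict gap is precisely what prevents interior concentration and forces a nonvanishing boundary trace. The remaining ingredients---weak lower semicontinuity, compactness of the subcritical and trace embeddings, and the elementary exponent arithmetic around $r_2=\tfrac{2(n-1)}{n}$---are routine, so the write-up should be short.
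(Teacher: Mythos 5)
Your proof is correct. The reduction via \eqref{eq:varepsilon1} (with $r=r_2$) to the single bound $\liminf_\alpha\|u_\alpha\|_{L^{2(n-1)/(n-2)}(\partial M)}>0$ is exactly what the paper does, and your exponent arithmetic checks out: $(n-2)\bigl(\tfrac{2(n-1)}{n-2}-r_2\bigr)=2r_2$ gives $\mu_\alpha^{r_2}u_\alpha^{2(n-1)/(n-2)}\le u_\alpha^{r_2}$ pointwise, and $2p/r_2=2^*$ for $p=2(n-1)/(n-2)$.

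For the lower bound itself you take a modestly different route from the paper. The paper transcribes the Br\'ezis--Lieb splitting argument of Lemma~\ref{lem:lemalphato0}: assume the bound fails, apply Lemma~\ref{lem:pro1.3} to $u_\alpha-\hat u$, observe $\|\hat u\|_{L^{r_1}(M)}=0$ along the way, and derive $\int_M\hat u^{2^*}=1$, contradicting $\hat u\equiv 0$. You instead identify $\bar u\equiv 0$ up front (from $\|u_\alpha\|_{L^{r_1}(M)}\to 0$ and the compact embedding $H^1\hookrightarrow L^{r_1}$), which makes the Br\'ezis--Lieb splitting trivial, and then apply Lemma~\ref{lem:pro1.3} directly to $u_\alpha$ together with the normalization $\int_M u_\alpha^{2^*}=1$, the limit $\int_M|\nabla_g u_\alpha|^2\to 1/(2^{2/n}S)$ obtained from \eqref{eq:5-2}, and the compactness of the interior and trace embeddings to kill the $L^2$ terms. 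The conclusion then falls out of the strict gap $2^{-2/n}<1$, i.e.\ $(S+\varepsilon)/(2^{2/n}S)<1$ for small $\varepsilon$. Tracing through the paper's chain of inequalities with $\hat u\equiv 0$ plugged in shows the two arguments are pivoting on the same numerical gap; your presentation simply bypasses the Br\'ezis--Lieb apparatus by using the triviality of the weak limit at the outset rather than at the end, which is cleaner. Both are correct.
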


\begin{proof}
The proof is similar to that of Lemma \ref{lem:lemalphato0}. We first claim
\be\label{eq:5-8}
\liminf _{\alpha \rightarrow \infty}\|u_{\alpha}\|_{L^{2(n-1)/(n-2)}(\partial M)}>0.
\ee
If the claim were false, i.e., $\|u_{\alpha}\|_{L^{2(n-1)/(n-2)}(\partial M)} \rightarrow 0$ along a subsequence $\alpha \rightarrow \infty$. Then, in view of \eqref{eq:5-1} and \eqref{eq:5-2}, there would exist $\hat{u} \in H_{0}^{1}(M)$ such that $u_{\alpha}$ weakly converges to $\hat{u}$.  It follows from Br\'ezis-Lieb lemma that $u_{\alpha}$ and $\hat{u}$ satisfy
\be\label{eq:BrezisLieb1-}
\int_{M} u_{\alpha}^{2^*}\, \ud v_g-\int_{M}|u_{\alpha}-\hat{u}|^{2^*}\, \ud v_g-\int_{M} \hat{u}^{2^*}\, \ud v_g \rightarrow 0 \quad \text { as } \alpha \rightarrow \infty,
\ee
and, in view of \eqref{eq:2*norm=1-},
\be\label{eq:BrezisLieb2-}
\int_{M}|u_{\alpha}-\hat{u}|^{2^*}\, \ud v_g \leq 1+o(1), \quad \int_{M} \hat{u}^{2^*}\, \ud v_g \leq 1.
\ee
By Lemma \ref{lem:pro1.3} applied to $u_{\alpha}-\hat{u},$ we have that for any $\varepsilon>0$,
\be\label{eq:lemmaapplied-}
\int_{M}|\nabla_{g}(u_{\alpha}-\hat{u})|^{2}\, \ud v_g\geq(1 / S-\varepsilon)\Big(\int_{M}|u_{\alpha}-\hat{u}|^{2^*}\, \ud v_g\Big)^{2 / 2^*}.
\ee
By the definition of $\zeta_{\alpha},$ we have
$$
\int_{M}|\nabla_{g} \hat{u}|^{2}\, \ud v_g\geq \zeta_{\alpha}\Big(\int_{M} \hat{u}^{2^*}\, \ud v_g\Big)^{2/2^*}-\alpha\|\hat{u}\|_{L^{r_1}(M)}^2.
$$
From \eqref{eq:5-7} we know $\|\hat{u}\|_{L^{r_1}(M)}=0$, then,
\be\label{eq:defiell-}
\int_{M}|\nabla_{g} \hat{u}|^{2}\, \ud v_g\geq \zeta_{\alpha}\Big(\int_{M} \hat{u}^{2^*}\, \ud v_g\Big)^{2/2^*}.
\ee
By the Sobolev embedding theorem, \eqref{eq:BrezisLieb1-}, \eqref{eq:BrezisLieb2-}, \eqref{eq:lemmaapplied-} and \eqref{eq:defiell-}, we have, for $\varepsilon>0$,
$$
\begin{aligned}
\zeta_{\alpha} =&\int_{M}|\nabla_{g} u_{\alpha}|^{2}\, \ud v_g +\frac{n-2}{2} \int_{\partial M} h_{g} u_{\alpha}^{2}\, \ud s_g+\alpha(\|u_{\alpha}\|_{L^{r_1}(M)}^{2}+\|u_{\alpha}\|_{L^{r_2}(\partial M)}^{2}) \\
=&\int_{M}|\nabla_{g}(u_{\alpha}-\hat{u})|^{2}\, \ud v_g+\int_{M}|\nabla_{g} \hat{u}|^{2}\, \ud v_g+\alpha(\|u_{\alpha}\|_{L^{r_1}(M)}^{2}+\|u_{\alpha}\|_{L^{r_2}(\partial M)}^{2})+o(1) \\
\geq&(1 / S-\varepsilon)\Big(\int_{M}|u_{\alpha}-\hat{u}|^{2^*}\, \ud v_g\Big)^{2 / 2^*}+\zeta_{\alpha}\Big(\int_{M} \hat{u}^{2^*}\, \ud v_g\Big)^{2 / 2^*}\\
&+\alpha(\|u_{\alpha}\|_{L^{r_1}(M)}^{2}+\|u_{\alpha}\|_{L^{r_2}(\partial M)}^{2})+o(1) \\
\geq&(1 / S-\varepsilon)\int_{M}|u_{\alpha}-\hat{u}|^{2^*}\, \ud v_g+\zeta_{\alpha}\int_{M} \hat{u}^{2^*}\, \ud v_g+\alpha(\|u_{\alpha}\|_{L^{r_1}(M)}^{2}+\|u_{\alpha}\|_{L^{r_2}(\partial M)}^{2})+o(1) \\
=&(1 / S-\varepsilon-\zeta_{\alpha}) \int_{M}|u_{\alpha}-\hat{u}|^{2^*}\, \ud v_g+\zeta_{\alpha}+\alpha(\|u_{\alpha}\|_{L^{r_1}(M)}^{2}+\|u_{\alpha}\|_{L^{r_2}(\partial M)}^{2})+o(1).
\end{aligned}
$$
Taking $\varepsilon$ small enough, we derive by \eqref{eq:5-1} that $\|u_{\alpha}-\hat{u}\|_{L^{2^*}(M)} \rightarrow 0 .$ In particular, in view of \eqref{eq:2*norm=1-}, $\int_{M} \hat{u}^{2^*}\, \ud v_g=1 .$ This contradicts to $\|\hat{u}\|_{L^{r_1}(M)}=0$.

It follows from \eqref{eq:5-8}, the definition of $\mu_{\alpha},$ and \eqref{eq:5-7} that as $\alpha \rightarrow \infty$,
\be\label{eq:varepsilon-1}
\alpha \mu_{\alpha}^{2} \leq C \alpha \mu_{\alpha}^{2} \Big(\int_{\partial M} u_{\alpha}^{\frac{2(n-1)}{n-2}} \, \ud  s_{g}\Big)^{2/r_2} \leq C \alpha \Big(\int_{\partial M} u_{\alpha}^{r_2}\, \ud s_g\Big)^{2/r_2} \rightarrow 0.
\ee
The proof of Lemma \ref{lem:lem5.2} is completed.
\end{proof}

Similar to Proposition \ref{pro:proMoser}, we have
\begin{proposition}\label{pro:proMoser2}
There exists some constant $C$ independent of $\alpha$ such that for all $\alpha \geq 1$,
\be\label{eq:Moser2}
u_{\alpha} / \varphi_{\alpha} \leq C\quad \text { for }\quad x \in \overline{M},
\ee
where $\varphi_{\alpha}$ was defined in \eqref{eq:varphialpha}.
\end{proposition}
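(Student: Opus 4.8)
The plan is to repeat, with only cosmetic changes, the chain of arguments running from Lemma \ref{lem:lemalphato0} through Proposition \ref{pro:proMoser}; the single genuinely new feature is the extra interior subcritical term $-\alpha\|u_\alpha\|_{L^{r_1}(M)}^{2-r_1}u_\alpha^{r_1-1}$ in \eqref{eq:5-3}, whose sign will turn out to be harmless for an upper bound. First I would introduce the Robin coefficient $b_\alpha$ on $\partial M$ by the formula of Section 2 with $r$ replaced by $r_2$, i.e.\ $b_\alpha=\min\{\frac{n-2}{2}h_g+\alpha(\|u_\alpha\|_{L^{r_2}(\partial M)}/u_\alpha)^{2-r_2},\,1\}$ where $u_\alpha\neq0$ and $b_\alpha=1$ where $u_\alpha=0$; this is Lipschitz and satisfies $-\frac{n-2}{2}\|h_g\|_{L^\infty(\partial M)}\le b_\alpha\le1$. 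Using \eqref{eq:5-7} in place of \eqref{eq:limitalphaualpha}, the computation of Section 2 gives $\operatorname{vol}_g\{b_\alpha<1/2\}\to0$, hence the bilinear form $\int_M|\nabla_g u|^2\,\ud v_g+\int_{\partial M}b_\alpha u^2\,\ud s_g$ is coercive for large $\alpha$, and there is a Green's function $G_\alpha$ of \eqref{eq:GreenFunction1} (if $x_\alpha\in\partial M$) or of \eqref{eq:GreenFunction1'} (if $x_\alpha\in M$) satisfying the two-sided bounds \eqref{eq:GreenFunction2}, \eqref{eq:GreenFunction2'}. I then set $\varphi_\alpha=\mu_\alpha^{(n-2)/2}G_\alpha$ as in \eqref{eq:varphialpha}, $w_\alpha=u_\alpha/\varphi_\alpha$ and $\hat g=\varphi_\alpha^{4/(n-2)}g$.

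Next I would push \eqref{eq:5-3} through the conformal change of variables \eqref{eq:conformal invariance} with $\psi=u_\alpha$ and $\varphi=\varphi_\alpha$, exactly as in the derivation of \eqref{eq:walpha}; using $2^*-1=\frac{n+2}{n-2}$ this produces
\[
-\Delta_{\hat g}w_\alpha=\zeta_\alpha w_\alpha^{2^*-1}+\varphi_\alpha^{-\frac{n+2}{n-2}}w_\alpha\,\Delta_g\varphi_\alpha-\alpha\|u_\alpha\|_{L^{r_1}(M)}^{2-r_1}\varphi_\alpha^{-\frac{n+2}{n-2}}u_\alpha^{r_1-1}\quad\text{in }M,
\]
with a Robin condition on $\partial M$ of the form in \eqref{eq:walpha} but with $r,\|u_\alpha\|_{L^r(\partial M)}$ replaced by $r_2,\|u_\alpha\|_{L^{r_2}(\partial M)}$. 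Because $\Delta_g\varphi_\alpha=0$ off $x_\alpha$, because the last term displayed is $\le0$ (as $u_\alpha\ge0$), and because the choice of $b_\alpha$ makes the Robin right-hand side $\le0$ off $x_\alpha$ — the same cancellation as in Proposition \ref{pro:proMoser}, using $\partial_g\varphi_\alpha/\partial\nu=-b_\alpha\varphi_\alpha$ and $u_\alpha^{2-r_2}(b_\alpha-\frac{n-2}{2}h_g)\le\alpha\|u_\alpha\|_{L^{r_2}(\partial M)}^{2-r_2}$ — I obtain
\[
-\Delta_{\hat g}w_\alpha\le\zeta_\alpha w_\alpha^{2^*-1}\text{ in } M\setminus\{x_\alpha\},\qquad\frac{\partial_{\hat g}w_\alpha}{\partial\nu}\le0\text{ on } \partial M\setminus\{x_\alpha\}.
\]
This is precisely the input of the Moser iteration of Li--Zhu \cite{LiZhuSharp1997}, which yields $\|w_\alpha\|_{L^\infty(M\setminus B_{\mu_\alpha}(x_\alpha))}\le C$ with $C$ independent of $\alpha$.

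Finally, to control $w_\alpha$ on $B_{\mu_\alpha}(x_\alpha)$ I would run the blow-up analysis of Section 2 essentially verbatim (Lemma \ref{lem:lemT=0}, Proposition \ref{pro:energy convergence} and their precursors): after rescaling both subcritical coefficients tend to $0$ — the boundary one exactly as in \eqref{eq:varepsilon2}, and the interior one because its rescaled coefficient equals $\alpha\|u_\alpha\|_{L^{r_1}(M)}^{2-r_1}\mu_\alpha^{4n/(n+2)}$, which tends to $0$ since $\|u_\alpha\|_{L^{r_1}(M)}\le C\|u_\alpha\|_{L^{2^*}(M)}=C$, $\alpha\mu_\alpha^2\to0$ by Lemma \ref{lem:lem5.2} and $4n/(n+2)>2$ — so the blow-up limit still solves $-\Delta U=(2^{2/n}S)^{-1}U^{2^*-1}$ in $\mathbb{R}^n_+$ with $\partial U/\partial\nu=0$, hence $v_\alpha\to U_1$ in $C^2_{loc}(\overline{\mathbb{R}^n_+})$. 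Together with the lower bound in \eqref{eq:GreenFunction2}, \eqref{eq:GreenFunction2'} this gives $\|w_\alpha\|_{L^\infty(B_{\mu_\alpha}(x_\alpha))}\le C$, and \eqref{eq:Moser2} follows. The only point that genuinely needs attention — and it is minor — is to confirm that the new interior term retains its favorable (non-positive) sign under the conformal change, which is immediate since $u_\alpha\ge0$, and that its rescaled coefficient does not disturb the blow-up limit, which is the elementary estimate just indicated; everything else is a line-by-line transcription of Section 2 and of the proof of Proposition \ref{pro:proMoser}.
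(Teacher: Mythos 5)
Your proposal is correct and is essentially the argument the paper leaves implicit when it writes ``Similar to Proposition \ref{pro:proMoser}''; you reconstruct the Robin coefficient $b_\alpha$, the Green's function bound, and the Li--Zhu Moser iteration with $r$ replaced by $r_2$ exactly as in Section~2. The two genuinely new points you flag --- that the interior subcritical term keeps its favorable sign through the conformal change (since $\Delta_g\varphi_\alpha=0$ off $x_\alpha$ and $u_\alpha\ge0$), and that its rescaled coefficient $\alpha\|u_\alpha\|_{L^{r_1}(M)}^{2-r_1}\mu_\alpha^{4n/(n+2)}\to0$ because $4n/(n+2)>2$, $\alpha\mu_\alpha^{2}\to0$ from Lemma~\ref{lem:lem5.2}, and $\|u_\alpha\|_{L^{r_1}(M)}\le C$ --- are precisely the verifications that the paper suppresses, and you carry them out correctly.
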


Let $Q_{\alpha} \in \partial M$ be the closest point to $x_{\alpha}$. For some small $\delta_{0}$, let $\psi_{\alpha} \in C^{\infty}(\overline{M})$ satisfy $\psi_{\alpha}(Q_{\alpha})=1$, $1/2 \leq \psi_{\alpha} \leq 2$, $\|\psi_{\alpha}\|_{C^{2}(\overline{M})} \leq C,$ and
$$
\left\{\begin{array}{ll}
\Delta_{g} \psi_{\alpha}=0 & \text { in } B_{2 \delta_{0}}^{+}, \\
\frac{\partial_{g} \psi_{\alpha}}{\partial \nu}+\frac{n-2}{2} h_{g} \psi_{\alpha}=0 & \text { on } \partial^{\prime} B_{2 \delta_{0}}^{+}.
\end{array}\right.
$$
Here we used the Fermi coordinate with respect to metric $g$ centered at $Q_{\alpha}$. Set $\hat{g}=\psi_{\alpha}^{4 /(n-2)} g .$ {It is easy to see that $h_{\hat{g}}=0$ on $\partial^{\prime} B_{\delta_{0}}^{+} .$} Hence, $u_{\alpha} / \psi_{\alpha}$ satisfies
\be\label{eq:u/psi2}
\left\{\begin{array}{ll}
-\Delta_{\hat{g}} \frac{u_{\alpha}}{\psi_{\alpha}}=\zeta_{\alpha} (\frac{u_{\alpha}}{\psi_{\alpha}})^{2^*-1}-\alpha\|u_{\alpha}\|_{L^{r_1}(M)}^{2-r_1}\psi_{\alpha}^{-\frac{n+2}{n-2}} u_{\alpha}^{r_1-1} & \text { in } B_{2 \delta_{0}}^{+}, \\
\frac{\partial_{\hat{g}}}{\partial \nu} \frac{u_{\alpha}}{\psi_{\alpha}}+\alpha\|u_{\alpha}\|_{L^{r_2}(\partial M)}^{2-r_2} \psi_{\alpha}^{-\frac{n}{n-2}} u_{\alpha}^{r_2-1}=0 & \text { on } \partial^{\prime} B_{2 \delta_{0}}^{+}.
\end{array}\right.
\ee

Set
$$
w_{\alpha}=\frac{u_{\alpha}}{\psi_{\alpha}}-\chi_{\alpha}-t_{\alpha} \sigma_{\xi_{\alpha}, \lambda_{\alpha}},
$$
where $\chi_{\alpha}$, $t_{\alpha}$, and $\sigma_{\xi_{\alpha}, \lambda_{\alpha}}$ are defined in section 3.

In order to estimate $w_{\alpha},$ we begin by writing an equation for $w_{\alpha}$:

\begin{lemma}\label{lem:lemwalphaEquation-}
$w_{\alpha}$ satisfies
\be\label{eq:walphaEquation-}
\left\{\begin{array}{ll}
-\Delta_{\hat{g}} w_{\alpha}=k_{\alpha}|\Theta_{\alpha}|^{2^*-3} \Theta_{\alpha} w_{\alpha}+b^{\prime}|\Theta_{\alpha}|^{2^*-3} w_{\alpha}^{2}+b^{\prime \prime}|w_{\alpha}|^{2^*-1}+f_{\alpha} & \text { in } B_{\delta_{\alpha}}^{+}, \\
\frac{\partial_{\hat{g}}w_{\alpha}}{\partial \nu}=-\alpha\|u_{\alpha}\|_{L^{r_2}(\partial M)}^{2-r_2} \psi_{\alpha}^{-\frac{n}{n-2}} u_{\alpha}^{r_2-1} & \text { on } \partial^{\prime} B_{\delta_{\alpha}}^{+},
\end{array}\right.
\ee
where
$$
\begin{aligned}
k_{\alpha}&=(2^*-1)\zeta_{\alpha},\\
\Theta_{\alpha}&=t_{\alpha} \sigma_{\alpha}+\chi_{\alpha}, \\
f_{\alpha}&=\zeta_{\alpha} (t_{\alpha}U_{\xi_{\alpha}, \lambda_{\alpha}})^{2^*-1}+t_{\alpha} \Delta_{\hat{g}} U_{\xi_{\alpha}, \lambda_{\alpha}}+O(\mu_{\alpha}^{(n-2) / 2}) U_{\xi_{\alpha}, \lambda_{\alpha}}^{2^*-2}{-\alpha\|u_{\alpha}\|_{L^{r_1}(M)}^{2-r_1}\psi_{\alpha}^{-\frac{n+2}{n-2}} u_{\alpha}^{r_1-1}},
\end{aligned}
$$
and $b^{\prime}$, $b^{\prime \prime}$ are bounded functions with $b^{\prime} \equiv 0$ if $n \geq 6$.
\end{lemma}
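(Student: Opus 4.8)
The proof is a direct adaptation of that of Lemma \ref{lem:lemwalphaEquation}; the only genuinely new ingredient is the interior penalization term produced by the $L^{r_1}(M)$ norm in \eqref{eq:5-3}, and this term simply gets carried along inside $f_\alpha$. The plan is to differentiate $w_\alpha=\frac{u_\alpha}{\psi_\alpha}-\chi_\alpha-t_\alpha\sigma_{\xi_\alpha,\lambda_\alpha}$ using the three equations at hand: the equation \eqref{eq:u/psi2} for $u_\alpha/\psi_\alpha$, the harmonicity of $\chi_\alpha$ (so $\Delta_{\hat g}\chi_\alpha=0$), and \eqref{eq:equ3} which gives $\Delta_{\hat g}\sigma_\alpha=\Delta_{\hat g}U_{\xi_\alpha,\lambda_\alpha}$. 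This yields
$$-\Delta_{\hat g}w_\alpha=\zeta_\alpha\Big(\tfrac{u_\alpha}{\psi_\alpha}\Big)^{2^*-1}-\alpha\|u_\alpha\|_{L^{r_1}(M)}^{2-r_1}\psi_\alpha^{-\frac{n+2}{n-2}}u_\alpha^{r_1-1}+t_\alpha\Delta_{\hat g}U_{\xi_\alpha,\lambda_\alpha}\quad\text{in }B_{\delta_\alpha}^+.$$

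Next I would write $u_\alpha/\psi_\alpha=\Theta_\alpha+w_\alpha$ with $\Theta_\alpha=t_\alpha\sigma_\alpha+\chi_\alpha$ and apply the elementary expansion of $(x+y)^{2^*-1}$ used in Lemma \ref{lem:lemwalphaEquation} (with $x=\Theta_\alpha$, $y=w_\alpha$), together with the identity
$$|\Theta_\alpha|^{2^*-2}\Theta_\alpha=(t_\alpha U_{\xi_\alpha,\lambda_\alpha})^{2^*-1}+O\big(\mu_\alpha^{(n-2)/2}U_{\xi_\alpha,\lambda_\alpha}^{2^*-2}\big),$$
which follows from $\Theta_\alpha=t_\alpha U_{\xi_\alpha,\lambda_\alpha}-t_\alpha h_\alpha+\chi_\alpha$ and the bound $|\chi_\alpha-t_\alpha h_\alpha|\le C\mu_\alpha^{(n-2)/2}$ supplied by Lemma \ref{lem:lem3.1}(i)--(ii). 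Collecting terms gives the interior equation with $k_\alpha=(2^*-1)\zeta_\alpha$, while the interior penalization term $-\alpha\|u_\alpha\|_{L^{r_1}(M)}^{2-r_1}\psi_\alpha^{-\frac{n+2}{n-2}}u_\alpha^{r_1-1}$ is simply absorbed into $f_\alpha$ (to be estimated in the subsequent energy bound). For the boundary condition one differentiates $w_\alpha$ and uses the Neumann conditions from \eqref{eq:u/psi2}, \eqref{eq:equ2}, \eqref{eq:equ3} on $\partial'B_{\delta_\alpha}^+$, noting that in the Fermi coordinates centered at $Q_\alpha$ one has $\frac{\partial_{\hat g}U_{\xi_\alpha,\lambda_\alpha}}{\partial\nu}=\frac{\partial U_{\xi_\alpha,\lambda_\alpha}}{\partial x_n}=0$ (here $\xi_\alpha\in\partial'$); hence the $\chi_\alpha$ and $\sigma_\alpha$ contributions drop out and only the $L^{r_2}$ boundary penalization survives, giving exactly the stated Neumann datum.

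I do not expect an essential obstacle here: every step is already performed in the proof of Lemma \ref{lem:lemwalphaEquation}. The only point requiring a short verification is that the auxiliary estimates of Lemma \ref{lem:lem3.1}---in particular $\|\chi_\alpha\|_{L^\infty(B^+_{\delta_\alpha})}+\|h_\alpha\|_{L^\infty(B^+_{\delta_\alpha})}\le C\mu_\alpha^{(n-2)/2}$---remain valid for the functional $J_\alpha$. This is the case because their proof uses only the pointwise bound $u_\alpha\le C\mu_\alpha^{(n-2)/2}\operatorname{dist}_g(\cdot,x_\alpha)^{2-n}$, now furnished by Proposition \ref{pro:proMoser2} together with \eqref{eq:GreenFunction2}, and the energy-decay selection of $\delta_\alpha$ as in \eqref{eq:3.3}, both of which transfer to the present setting without change.
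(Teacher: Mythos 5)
Your proposal is correct and follows essentially the same route as the paper's own proof: differentiate $w_\alpha$ using \eqref{eq:u/psi2}, the harmonicity of $\chi_\alpha$, and \eqref{eq:equ3}, write $u_\alpha/\psi_\alpha=\Theta_\alpha+w_\alpha$, apply the elementary expansion of $(x+y)^{2^*-1}$ and the identity $|\Theta_\alpha|^{2^*-2}\Theta_\alpha=(t_\alpha U_{\xi_\alpha,\lambda_\alpha})^{2^*-1}+O(\mu_\alpha^{(n-2)/2}U_{\xi_\alpha,\lambda_\alpha}^{2^*-2})$, and absorb the extra interior penalization term into $f_\alpha$. Your closing remark that the Lemma~\ref{lem:lem3.1} estimates transfer to the $J_\alpha$ setting via Proposition~\ref{pro:proMoser2} is a point the paper leaves implicit, and it is a reasonable thing to verify.
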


\begin{proof}
First of all, by the definition of $w_{\alpha}$, \eqref{eq:u/psi2}, \eqref{eq:equ2} and \eqref{eq:equ3},  we have
\be\label{eq:Deltawalpha-}
\begin{aligned}
-\Delta_{\hat{g}} w_{\alpha}&=-\Delta_{\hat{g}}\Big(\frac{u_{\alpha}}{\psi_{\alpha}}-\chi_{\alpha}-t_{\alpha} \sigma_{\alpha}\Big)\\
&=\zeta_{\alpha}(\Theta_{\alpha}+w_{\alpha})^{2^*-1}+t_{\alpha} \Delta_{\hat{g}} \sigma_{\alpha}{-\alpha\|u_{\alpha}\|_{L^{r_1}(M)}^{2-r_1}\psi_{\alpha}^{-\frac{n+2}{n-2}} u_{\alpha}^{r_1-1}}\\
&=\zeta_{\alpha}(\Theta_{\alpha}+w_{\alpha})^{2^*-1}+t_{\alpha} \Delta_{\hat{g}} U_{\xi_{\alpha}, \lambda_{\alpha}}{-\alpha\|u_{\alpha}\|_{L^{r_1}(M)}^{2-r_1}\psi_{\alpha}^{-\frac{n+2}{n-2}} u_{\alpha}^{r_1-1}}
\end{aligned}\quad \text { in } B_{\delta_{\alpha}}^{+},
\ee
and
$$
\begin{aligned}
\frac{\partial_{\hat{g}} w_{\alpha}}{\partial \nu} &=-\alpha\|u_{\alpha}\|_{L^{r}(\partial M)}^{2-r} \psi_{\alpha}^{-\frac{n}{n-2}} u_{\alpha}^{r-1}-t_{\alpha} \frac{\partial_{\hat{g}} \sigma_{\alpha}}{\partial \nu} \\
&=-\alpha\|u_{\alpha}\|_{L^{r}(\partial M)}^{2-r} \psi_{\alpha}^{-\frac{n}{n-2}} u_{\alpha}^{r-1}-t_{\alpha} \frac{\partial_{\hat{g}} U_{\xi_{\alpha}, \lambda_{\alpha}}}{\partial \nu} \\
&=-\alpha\|u_{\alpha}\|_{L^{r}(\partial M)}^{2-r} \psi_{\alpha}^{-\frac{n}{n-2}} u_{\alpha}^{r-1}
\end{aligned}\quad \text { on } \partial^{\prime} B_{\delta_{\alpha}}^{+},
$$
where we used that $\frac{\partial_{\hat{g}} U_{\xi_{\alpha}, \lambda_{\alpha}}}{\partial \nu}=\frac{\partial U_{\xi_{\alpha}, \lambda_{\alpha}}}{\partial x_{n}}=0$ in Fermi coordinate systems. By the proof of Lemma \ref{lem:lemwalphaEquation}, we have 
$$
(\Theta_{\alpha}+w_{\alpha})^{2^*-1}=|\Theta_{\alpha}|^{2^*-2} \Theta_{\alpha}+(2^*-1)|\Theta_{\alpha}|^{2^*-3} \Theta_{\alpha} w_{\alpha} +b^{\prime}|\Theta_{\alpha}|^{2^*-3} w_{\alpha}^{2}+b^{\prime \prime}|w_{\alpha}|^{2^*-1}
$$
and
$$
|\Theta_{\alpha}|^{2^*-2} \Theta_{\alpha}=(t_{\alpha} U_{\xi_{\alpha}, \lambda_{\alpha}})^{2^*-1}+O(\mu_{\alpha}^{(n-2) / 2} U_{\xi_{\alpha}, \lambda_{\alpha}}^{2^*-2}),
$$
where $b^{\prime}, b^{\prime \prime}$ are bounded functions and $b^{\prime} \equiv 0$ if $n \geq 6$. Inserting the above expansions into \eqref{eq:Deltawalpha-}, we obtain \eqref{eq:walphaEquation-}.
\end{proof}

\begin{proposition}\label{pro:proEnergyEstimation2}
We have
$$
\begin{aligned}
\|w_{\alpha}\|\leq C\Big\{&\mu_{\alpha}^2\|U_{1}\|_{L^{2^{*\prime}}(B_{\mu_{\alpha}^{-1}}^{+})}+\varepsilon_{\alpha}\|U_{1}^{r_2-1}\|_{L^{r_2}(\partial^{\prime} B_{\mu_{\alpha}^{-1}}^{+})}+\tilde{\varepsilon}_{\alpha}\|U_{1}^{r_1-1}\|_{L^{r_1}(B_{\mu_{\alpha}^{-1}}^{+})}\\
&+\mu_{\alpha}^{n-2}\|U_{1}^{2^*-2}\|_{L^{2^{*\prime}}(B_{\mu_{\alpha}^{-1}}^{+})}\Big\},
\end{aligned}
$$
where $\varepsilon_{\alpha}$ is given in \eqref{eq:varepsilonalpha}, $\tilde{\varepsilon}_{\alpha}=\alpha \mu_{\alpha}^{n-\frac{n-2}{2} r_1}\|u_{\alpha}\|_{L^{r_1}(M)}^{2-r_1}$.
\end{proposition}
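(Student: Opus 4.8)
The plan is to carry out a line-by-line adaptation of the proof of Proposition~\ref{pro:proEnergyEstimation}, the only new feature being the extra interior term $-\alpha\|u_{\alpha}\|_{L^{r_1}(M)}^{2-r_1}\psi_{\alpha}^{-\frac{n+2}{n-2}}u_{\alpha}^{r_1-1}$ that sits inside $f_{\alpha}$ in Lemma~\ref{lem:lemwalphaEquation-}. First I would record that every auxiliary fact used in Sections~2--4 transfers verbatim to the present two--constraint functional $J_{\alpha}$, with $\ell_{\alpha}$ and $r$ replaced by $\zeta_{\alpha}$ and $r_2$: the analogue of Proposition~\ref{pro:energy convergence} (so $u_{\alpha}/\psi_{\alpha}-U_{\mu_{\alpha}}\to0$ in $H^{1}(B_{\delta}^{+})$); the analogues of Lemma~\ref{lem:lem3.1}, namely $\|h_{\alpha}\|_{L^{\infty}}+\|\nabla_{\hat g}h_{\alpha}\|_{L^{2}}+\|\chi_{\alpha}\|_{L^{\infty}}+\|\nabla_{\hat g}\chi_{\alpha}\|_{L^{2}}\le C\mu_{\alpha}^{(n-2)/2}$, $\|w_{\alpha}\|\to0$, $t_{\alpha}\to1$, $\mu_{\alpha}^{-1}|\xi_{\alpha}|\to0$, $\mu_{\alpha}^{-1}\lambda_{\alpha}\to1$, and $w_{\alpha}\in W_{\alpha}$; and the coercivity estimate of Lemma~\ref{lem:lemcoercive}. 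The last of these is unchanged, since it rests only on $\zeta_{\alpha}\to\frac{1}{2^{2/n}S}$, on $\Theta_{\alpha}\to U_{1}$ after rescaling, and on $\hat g\to\delta_{ij}$, so Lemma~\ref{lem:lemcoercive1} applies as is with $k_{\alpha}=(2^*-1)\zeta_{\alpha}$. These preliminaries are proved exactly as before, now using Proposition~\ref{pro:proMoser2} and Lemma~\ref{lem:lem5.2} in place of Proposition~\ref{pro:proMoser} and Lemma~\ref{lem:lemalphato0}.

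Next I would test \eqref{eq:walphaEquation-} against $w_{\alpha}$ and integrate over $B_{\delta_{\alpha}}^{+}$. Since $\|w_{\alpha}\|\to0$, H\"older's inequality together with the Sobolev embedding $H_{0,L}(B_{\delta_{\alpha}}^{+})\hookrightarrow L^{2^*}$ absorbs the term $b^{\prime}|\Theta_{\alpha}|^{2^*-3}w_{\alpha}^{3}$ (in fact $b^{\prime}\equiv0$ since $n\ge7$) and the term $b^{\prime\prime}|w_{\alpha}|^{2^*}$ into $o(\|w_{\alpha}\|^{2})$, and the left-hand side becomes $Q_{\alpha}(w_{\alpha},w_{\alpha})$; moving that $o(\|w_{\alpha}\|^{2})$ to the left and applying Lemma~\ref{lem:lemcoercive} gives, for $\alpha$ large,
$$
c_{0}\|w_{\alpha}\|^{2}\le\Big|\int_{B_{\delta_{\alpha}}^{+}}f_{\alpha}\,w_{\alpha}\,\ud v_{\hat g}\Big|+\Big|\int_{\partial^{\prime}B_{\delta_{\alpha}}^{+}}\alpha\|u_{\alpha}\|_{L^{r_2}(\partial M)}^{2-r_2}\psi_{\alpha}^{-\frac{n}{n-2}}u_{\alpha}^{r_2-1}w_{\alpha}\,\ud s_{\hat g}\Big|.
$$
The three pieces of $f_{\alpha}$ already present in Lemma~\ref{lem:lemwalphaEquation}, namely $\zeta_{\alpha}(t_{\alpha}U_{\xi_{\alpha},\lambda_{\alpha}})^{2^*-1}+t_{\alpha}\Delta_{\hat g}U_{\xi_{\alpha},\lambda_{\alpha}}$ and $O(\mu_{\alpha}^{(n-2)/2})U_{\xi_{\alpha},\lambda_{\alpha}}^{2^*-2}$, together with the boundary integral, are estimated exactly as in the proof of Proposition~\ref{pro:proEnergyEstimation}: by \eqref{eq:eqUxilambda} one has $\|2^{2/n}S\Delta_{\hat g}U_{\xi_{\alpha},\lambda_{\alpha}}+U_{\xi_{\alpha},\lambda_{\alpha}}^{2^*-1}\|_{L^{2^{*\prime}}}\le C\|U_{\xi_{\alpha},\lambda_{\alpha}}\|_{L^{2^{*\prime}}}$, and after H\"older's inequality, the Sobolev embedding $H_{0,L}\hookrightarrow L^{2^*}$, the trace inequality of Lemma~\ref{lem:thm0.1}, the pointwise bound $u_{\alpha}\le CU_{\mu_{\alpha}}$ from Corollary~\ref{cor:Pointwise Estimation}, and the change of variables $z=x/\mu_{\alpha}$, they contribute
$$
C\Big\{\mu_{\alpha}^{2}\|U_{1}\|_{L^{2^{*\prime}}(B_{\mu_{\alpha}^{-1}}^{+})}+\varepsilon_{\alpha}\|U_{1}^{r_2-1}\|_{L^{r_2}(\partial^{\prime}B_{\mu_{\alpha}^{-1}}^{+})}+\mu_{\alpha}^{n-2}\|U_{1}^{2^*-2}\|_{L^{2^{*\prime}}(B_{\mu_{\alpha}^{-1}}^{+})}\Big\}\|w_{\alpha}\|,
$$
with $\varepsilon_{\alpha}$ as in \eqref{eq:varepsilonalpha}; these are exactly the first, second, and fourth terms in the asserted bound.

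It remains to handle the genuinely new piece $-\alpha\|u_{\alpha}\|_{L^{r_1}(M)}^{2-r_1}\psi_{\alpha}^{-\frac{n+2}{n-2}}u_{\alpha}^{r_1-1}$ of $f_{\alpha}$. Since $\psi_{\alpha}$ is bounded below, H\"older's inequality with the conjugate exponents $r_1$ and $2^*$ --- here one uses that $r_1=\frac{2n}{n+2}$ is precisely the Sobolev-dual exponent $(2^*)'=2^{*\prime}$ --- together with $H_{0,L}(B_{\delta_{\alpha}}^{+})\hookrightarrow L^{2^*}$ gives
$$
\Big|\int_{B_{\delta_{\alpha}}^{+}}\alpha\|u_{\alpha}\|_{L^{r_1}(M)}^{2-r_1}\psi_{\alpha}^{-\frac{n+2}{n-2}}u_{\alpha}^{r_1-1}w_{\alpha}\,\ud v_{\hat g}\Big|\le C\alpha\|u_{\alpha}\|_{L^{r_1}(M)}^{2-r_1}\|u_{\alpha}^{r_1-1}\|_{L^{r_1}(B_{\delta_{\alpha}}^{+})}\|w_{\alpha}\|.
$$
Invoking $u_{\alpha}\le CU_{\mu_{\alpha}}$ again, rescaling by $z=x/\mu_{\alpha}$, and using $\delta_{\alpha}\le\delta_{0}<1$ so that $B_{\delta_{\alpha}/\mu_{\alpha}}^{+}\subset B_{\mu_{\alpha}^{-1}}^{+}$, one obtains $\|u_{\alpha}^{r_1-1}\|_{L^{r_1}(B_{\delta_{\alpha}}^{+})}\le C\mu_{\alpha}^{\frac{n}{r_1}-\frac{n-2}{2}(r_1-1)}\|U_{1}^{r_1-1}\|_{L^{r_1}(B_{\mu_{\alpha}^{-1}}^{+})}$, and the arithmetic identity $\frac{n}{r_1}-\frac{n-2}{2}(r_1-1)=\frac{4n}{n+2}=n-\frac{n-2}{2}r_1$ shows that this piece contributes exactly $C\tilde\varepsilon_{\alpha}\|U_{1}^{r_1-1}\|_{L^{r_1}(B_{\mu_{\alpha}^{-1}}^{+})}\|w_{\alpha}\|$ with $\tilde\varepsilon_{\alpha}=\alpha\mu_{\alpha}^{\,n-\frac{n-2}{2}r_1}\|u_{\alpha}\|_{L^{r_1}(M)}^{2-r_1}$, which is the third term in the asserted bound. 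It is essential --- here as in the other terms --- to retain the truncated domain $B_{\mu_{\alpha}^{-1}}^{+}$ rather than $\R_{+}^{n}$, since for the smaller values of $n\ge7$ the integral of $U_{1}^{(r_1-1)r_1}$ over $\R_{+}^{n}$ diverges. Collecting the four contributions and dividing through by $\|w_{\alpha}\|$ yields the claim.

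I do not anticipate a serious obstacle; the work is essentially bookkeeping. The two points that require genuine care are the arithmetic check that the scaling exponent produced by the new interior term coincides with the one built into the definition of $\tilde\varepsilon_{\alpha}$, and the verification that each lemma of Sections~2--4 really does survive the passage from $I_{\alpha}$ to $J_{\alpha}$ --- which it does, because the additional interior zeroth-order term affects only the source $f_{\alpha}$ in the equation for $w_{\alpha}$ and leaves the coercivity structure of $Q_{\alpha}$ on $W_{\alpha}$ untouched.
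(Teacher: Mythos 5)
Your proposal is correct and follows essentially the same route as the paper's proof: test \eqref{eq:walphaEquation-} against $w_{\alpha}$, invoke the coercivity of $Q_{\alpha}$ on $W_{\alpha}$, and then estimate each source term by H\"older (using $r_1=2^{*\prime}$), the Sobolev embedding $H_{0,L}\hookrightarrow L^{2^*}$, the pointwise bound $u_{\alpha}\le CU_{\mu_{\alpha}}$, and a rescaling to the truncated domain $B_{\mu_{\alpha}^{-1}}^{+}$; the arithmetic identification of the scaling exponent with $n-\tfrac{n-2}{2}r_1$ is verified and matches the definition of $\tilde\varepsilon_{\alpha}$. The only difference is presentational: you spell out the exponent computation and the reason the truncated domain is retained, which the paper leaves implicit.
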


\begin{proof}
Multiplying \eqref{eq:walphaEquation-} by $w_{\alpha}$ and integrating over $B_{\delta_{\alpha}}^+$ we obtain
$$
Q_{\alpha}(w_{\alpha}, w_{\alpha})+o(\|w_{\alpha}\|^{2})=\int_{B_{\delta_{\alpha}}^{+}} w_{\alpha} f_{\alpha} \, \ud  v_{\hat{g}}-\int_{\partial^{\prime} B_{\delta_{\alpha}}^{+}} \alpha\|u_{\alpha}\|_{L^{r}(\partial M)}^{2-r} \psi_{\alpha}^{-\frac{n}{n-2}} u_{\alpha}^{r-1}w_{\alpha} \, \ud  s_{\hat{g}}.
$$
By H$\ddot{\text{o}}$lder's inequality, Sobolev's inequality, and Lemma \ref{lem:lemcoercive}, we have
$$
\begin{aligned}
\|w_{\alpha}\| \leq C\Big\{&\|2^{2 / n} S\Delta_{\hat{g}} U_{\xi_{\alpha}, \lambda_{\alpha}}+U_{\xi_{\alpha}, \lambda_{\alpha}}^{2^*-1}\|_{L^{2^{*\prime}}(B_{\delta_{\alpha}}^{+})}+\alpha\|u_{\alpha}\|_{L^{r_2}(\partial M)}^{2-r_2}\|u_{\alpha}^{r_2-1}\|_{L^{r_2}(\partial^{\prime} B_{\delta_{\alpha}}^{+})}\\
&{+\alpha\|u_{\alpha}\|_{L^{r_1}( M)}^{2-r_1}\|u_{\alpha}^{r_1-1}\|_{L^{2^{*\prime}}(B_{\delta_{\alpha}}^{+})}}+\mu_{\alpha}^{\frac{n-2}{2}}\|U_{\xi_{\alpha}, \lambda_{\alpha}}^{2^*-2}\|_{L^{2^{*\prime}}(B_{\delta_{\alpha}}^{+})}\Big\},
\end{aligned}
$$
where $2^{*\prime}=2n/(n+2)$. By the proof of Proposition \ref{pro:proEnergyEstimation}, we have
$$
\|2^{2 / n} S\Delta_{\hat{g}} U_{\xi_{\alpha}, \lambda_{\alpha}}+U_{\xi_{\alpha}, \lambda_{\alpha}}^{2^*-1}\|_{L^{2^{*\prime}}(B_{\delta_{\alpha}}^{+})}\leq C \lambda_{\alpha}^2\|U_{1}\|_{L^{2^{*\prime}}(B_{\lambda_{\alpha}^{-1}}^{+})},
$$
$$
\|u_{\alpha}^{r_2-1}\|_{L^{r_2}(\partial^{\prime} B_{\delta_{\alpha}}^{+})} \leq C\mu_{\alpha}^{n-1-\frac{n-2}{2} r_2}\|U_{1}^{r_2-1}\|_{L^{r_2}(\partial^{\prime} B_{\mu_{\alpha}^{-1}}^{+})},
$$
and
$$
\|U_{\xi_{\alpha}, \lambda_{\alpha}}^{2^*-2}\|_{L^{2^{*\prime}}(B_{\delta_{\alpha}}^{+})} \leq C \mu_{\alpha}^{\frac{n-2}{2}}\|U_{1}^{2^*-2}\|_{L^{2^{*\prime}}(B_{\mu_{\alpha}^{-1}}^{+})}.
$$
By Corollary \ref{cor:Pointwise Estimation}, we have $u_{\alpha} \leq C U_{\mu_{\alpha}}$. Hence
$$
{\|u_{\alpha}^{r_1-1}\|_{L^{r_1}(B_{\delta_{\alpha}}^{+})} \leq C\|U_{\mu_{\alpha}}^{r_1-1}\|_{L^{r_1}(B_{\delta_{\alpha}}^{+})} \leq C\mu_{\alpha}^{n-\frac{n-2}{2} r_1}\|U_{1}^{r_1-1}\|_{L^{r_1}(B_{\mu_{\alpha}^{-1}}^{+})},}
$$
Therefore, we obtained the estimate for $\|w_{\alpha}\|$.
\end{proof}

{\bf \noindent Proof of Theorem \ref{thm:mainthm3}.}\quad As in section 4, we have
$$
\frac{1}{2^{2/n}S}>J_{\alpha}(u_{\alpha})=K_g(u_{\alpha})+\alpha\|u_{\alpha}\|_{L^{r_1}(M)}^{2}+\alpha\|u_{\alpha}\|_{L^{r_2}(\partial M)}^{2},
$$
where
$$
K_g(u_{\alpha})=\frac{\int_{B_{\delta_{\alpha}}^{+}}|\nabla_{\hat{g}}(\frac{u_{\alpha}}{\psi_{\alpha}})|^{2} \, \ud  v_{\hat{g}}}{(\int_{B_{\delta_{\alpha}}^{+}}(\frac{u_{\alpha}}{\psi_{\alpha}})^{2^*} \, \ud  v_{\hat{g}})^{2 / 2^*}}+O(\mu_{\alpha}^{n-2}).
$$
Therefore we have
$$
\alpha\|u_{\alpha}\|_{L^{r_1}(M)}^{2}+\alpha\|u_{\alpha}\|_{L^{r_2}(\partial M)}^{2}\leq O(\mu_{\alpha}^{2})+O(\mu_{\alpha}^2\|U_{1}\|_{L^{2^{*\prime}}(B_{\mu_{\alpha}^{-1}}^{+})}\|w_{\alpha}\|+\mu_{\alpha}^{n-2}).
$$
By Proposition \ref{pro:proEnergyEstimation2}, we find
\begin{equation}
\begin{aligned}
&\alpha\|u_{\alpha}\|_{L^{r_1}(M)}^{2}+\alpha\|u_{\alpha}\|_{L^{r_2}(\partial M)}^{2}\\
\leq& C\Big\{\mu_{\alpha}^2\|U_{1}\|_{L^{2^{*\prime}}(B_{\mu_{\alpha}^{-1}}^{+})}
\Big(\mu_{\alpha}^2\|U_{1}\|_{L^{2^{*\prime}}(B_{\mu_{\alpha}^{-1}}^{+})}+\varepsilon_{\alpha}\|U_{1}^{r_2-1}\|_{L^{r_2}(\partial^{\prime} B_{\mu_{\alpha}^{-1}}^{+})}\\
&\quad+\tilde{\varepsilon}_{\alpha}\|U_{1}^{r_1-1}\|_{L^{r_1}(B_{\mu_{\alpha}^{-1}}^{+})}+\mu_{\alpha}^{n-2}\|U_{1}^{2^*-2}\|_{L^{2^{*\prime}}(B_{\mu_{\alpha}^{-1}}^{+})}\Big)+\mu_{\alpha}^{2}\Big\}.
\end{aligned}
\end{equation}
Due to $n \geq 7,$ we have
$$
\begin{aligned}
\|U_{1}\|_{L^{2^{*\prime}}(B_{\mu_{\alpha}^{-1}}^{+})} & \leq C, \\
\mu_{\alpha}^2\|U_{1}^{r_2-1}\|_{L^{r_2}(\partial^{\prime} B_{\mu_{\alpha}^{-1}}^{+})} & \leq C \mu_{\alpha}^2=o(1), \\
\mu_{\alpha}^2\|U_{1}^{r_1-1}\|_{L^{r_1}(B_{\mu_{\alpha}^{-1}}^{+})} & \leq C \mu_{\alpha}^2(1+\mu_{\alpha}^{\frac{n^{2}-12n+4}{2(n+2)}})=o(1), \\
\|U_{1}^{2^*-2}\|_{L^{2^{*\prime}}(B_{\mu_{\alpha}^{-1}}^{+})} & \leq C\mu_{\alpha}^{(6-n) / 2}.
\end{aligned}
$$
Moreover, we derive by \eqref{eq:varepsilon2} that $\varepsilon_{\alpha} \leq C\alpha\|u_{\alpha}\|_{L^{r_2}(\partial M)}^{2}$. Similarly, $\tilde{\varepsilon}_{\alpha} \leq C\alpha\|u_{\alpha}\|_{L^{r_1}(M)}^{2}$. It follows that
$$
\alpha\|u_{\alpha}\|_{L^{r_1}(M)}^{2}+\alpha\|u_{\alpha}\|_{L^{r_2}(\partial M)}^{2} \leq C \mu_{\alpha}^{2}.
$$
On the other hand, rescaling, we have:
$$
\|u_{\alpha}\|_{L^{r_1}(M)} \geq\|u_{\alpha}\|_{L^{r_1}(B_{\mu_{\alpha}}^{+}(Q_{\alpha}) \cap M)} \geq C \mu_{\alpha}^2\|U_{1}\|_{L^{r_1}(B_{1}^{+})} \geq C\mu_{\alpha}^2,
$$
$$
\|u_{\alpha}\|_{L^{r_2}(\partial M)} \geq\|u_{\alpha}\|_{L^{r_2}(B_{\mu_{\alpha}}^{+}(Q_{\alpha}) \cap \partial M)} \geq C \mu_{\alpha}\|U_{1}\|_{L^{r_2}(\partial^{\prime} B_{1}^{+})} \geq C\mu_{\alpha}.
$$
Hence,
$$
\alpha \leq C.
$$
This is a contradiction. Hence, Theorem \ref{thm:mainthm3} is established for all $n\geq7$.

\small

\bigskip

\noindent Z. Tang, J. Xiong \& N. Zhou

\medskip

\noindent School of Mathematical Sciences, Laboratory of Mathematics and Complex Systems, MOE, \\
Beijing Normal University, Beijing 100875, China\\[1mm]
Email: \textsf{tangzw@bnu.edu.cn}\\
Email: \textsf{jx@bnu.edu.cn}\\
Email: \textsf{nzhou@mail.bnu.edu.cn}

\end{document}